\newsavebox{\sticka}
\savebox{\sticka}[4pt]{
\begin{tikzpicture}[x=1pt, y=1pt]
\draw[line width= 1] (0,0) -- (0,6); 
\path [draw, fill] (0,7.5) circle (1.5pt); 
\end{tikzpicture}
}
\newtheorem{theorem}{Theorem} [section]
\newtheorem{lemma}[theorem]{Lemma}
\newtheorem{proposition}[theorem]{Proposition}
\theoremstyle{definition}
\newtheorem{remark}[theorem]{Remark}
\DeclareMathOperator*{\iintt}{\iint}
\newcommand{\noi}{\noindent}
\newcommand{\Z}{\mathbb{Z}}
\newcommand{\R}{\mathbb{R}}
\newcommand{\C}{\mathbb{C}}
\newcommand{\T}{\mathbb{T}}
\newcommand{\wick}[1]{\hspace{-2pt}:\hspace{-2pt}{#1}\hspace{-2pt}:\hspace{-2pt}}
\newcommand{\X}{ X^\alpha}
\renewcommand{\H}{\mathcal H}
\newcommand{\W}{\mathcal W}
\renewcommand{\u}{{\mathbf u}}
\renewcommand{\v}{{\mathbf v}}
\newcommand{\w}{{\mathbf w}}
\newcommand{\0}{\mathbf 0}
\newcommand{\norm}[1]{\left\|#1\right\|}
\renewcommand{\vec}[2]{\begin{pmatrix} #1 \\ #2 \end{pmatrix}}
\def\stick{\mathchoice
    {\raisebox{-1pt}{\usebox{\sticka}}}%
    {\raisebox{-1pt}{\usebox{\sticka}}}%
    {}%
    {}}
\newcommand{\fb}{\mathbf{f}}
\newcommand{\gb}{\mathbf{g}}
\DeclareMathOperator{\Lip}{Lip}
\newcommand{\prob}{\mathbb{P}}  
\newcommand{\Pt}[1]
{\mathcal{P}_{#1}}
\newcommand{\Q}{\mathcal{Q}}
\newcommand{\E}{\mathbb{E}}
\renewcommand{\L}{\mathcal{L}}
\newcommand{\B}{\mathcal{B}}
\newcommand{\Gb}{\mathbf{\Gamma}}
\newcommand{\F}{\mathcal{F}}
\def\norm#1{\|#1\|}
\newcommand{\al}{\alpha}
\newcommand{\be}{\beta}
\newcommand{\dl}{\delta}
\newcommand{\nb}{\nabla}
\newcommand{\ep}{\varepsilon}
\newcommand{\g}{\gamma}
\newcommand{\s}{\sigma}
\newcommand{\ft}{\widehat}
\newcommand{\wt}{\widetilde}
\newcommand{\cj}{\overline}
\newcommand{\dt}{\partial_t}
\newcommand{\embeds}{\hookrightarrow}
\newcommand{\ta}{\theta}
\renewcommand{\O}{\Omega}
\newcommand{\les}{\lesssim}
\newcommand{\wto}{\rightharpoonup}
\newcommand{\jb}[1]
{\langle #1 \rangle}
\newcommand{\ind}{\mathbf 1}
\newcommand{\N}{\mathbb{N}}
\newcommand{\NN}{\mathcal{N}}
\newcommand{\CC}{\mathcal{C}}
\newcommand{\EE}{\mathcal{E}}
\renewcommand{\H}{\mathcal{H}}
\newcommand{\Wal}{\mathcal{W}^{\al,\frac{2}{\al}}}
\newtheorem*{ackno}{Acknowledgements}
\newcommand{\eps}{\ep}
\numberwithin{equation}{section}
\numberwithin{theorem}{section}
\DeclareFontFamily{U}{matha}{\hyphenchar\font45}
\DeclareFontShape{U}{matha}{m}{n}{
      <5> <6> <7> <8> <9> <10> gen * matha
      <10.95> matha10 <12> <14.4> <17.28> <20.74> <24.88> matha12
      }{}
\DeclareSymbolFont{matha}{U}{matha}{m}{n}
\DeclareFontFamily{U}{mathx}{\hyphenchar\font45}
\DeclareFontShape{U}{mathx}{m}{n}{
      <5> <6> <7> <8> <9> <10>
      <10.95> <12> <14.4> <17.28> <20.74> <24.88>
      mathx10
      }{}
\DeclareSymbolFont{mathx}{U}{mathx}{m}{n}
\DeclareMathDelimiter{\vvvert}{0}{matha}{"7E}{mathx}{"17}
\newcommand{\vvv}{\vvvert}
\begin{document}
\baselineskip = 14pt

\title[Unique ergodicity for SdNLW on $\T^{2}$]
{On the unique ergodicity for a class of 2 dimensional stochastic wave equations}

\author[J.~Forlano and L.~Tolomeo]
{Justin Forlano and Leonardo Tolomeo}

\address{
Justin Forlano, Maxwell Institute for Mathematical Sciences\\
Department of Mathematics\\
Heriot-Watt University\\
Edinburgh\\
EH14 4AS\\
United Kingdom
and Department of Mathematics\\
University of California\\
Los Angeles\\
CA 90095\\
USA}

\email{forlano@math.ucla.edu}

 \address{
 Leonardo Tolomeo\\
 Mathematical Institute\\
  Hausdorff Center for Mathematics\\
   Universit\"{a}t Bonn\\
   Bonn\\
  Germany
}

\email{tolomeo@math.uni-bonn.de}

\subjclass[2010]{35L15, 37A25, 60H15}

\keywords{stochastic nonlinear wave equation; ergodicity; invariant measure; white noise}

\begin{abstract}
We study the global-in-time dynamics for a stochastic semilinear wave equation with cubic defocusing nonlinearity and additive noise, posed on the $2$-dimensional torus. The noise is taken to be slightly more regular than space-time white noise. In this setting, we show existence and uniqueness of an invariant measure for the Markov semigroup generated by the flow over an appropriately chosen Banach space. This extends a result of the second author \cite{t18erg} to a situation where the invariant measure is not explicitly known.
\end{abstract}

\maketitle
\tableofcontents

\section{Introduction}

Given $s > 0$ and $\g \in \R$, we consider here the following stochastic damped nonlinear wave equation (SDNLW): 
\begin{equation}\pagebreak
\begin{cases}\label{SDNLW}
\dt^{2}u+\dt u+u-\Delta u +u^{3}-3\g u=\sqrt{2}\jb{\nb}^{-s}\xi \\
(u,\dt u)|_{t = 0} = (u_0,u_1)
\end{cases}
\qquad ( t, x) \in \R_{+} \times \T^{2}, 
\end{equation}
where $\xi$ is a space-time white noise, and $\jb{\nb}^{-s}$ denotes the smoothing operator 
\begin{equation*}
\jb{\nb}^{-s} := \big(\tfrac 34- \Delta\big)^{-\frac s2}.
\end{equation*}
We consider \eqref{SDNLW} as a first-order system in the variable $\u := \begin{pmatrix} u \\ u_t \end{pmatrix}$,
\begin{equation}
\begin{cases}
\partial_t \u =  \begin{pmatrix} u \\ u_t \end{pmatrix}  = -\begin{pmatrix} 0 & -1 \\  1-\Delta & 1 \end{pmatrix} \begin{pmatrix} u \\ \dt u \end{pmatrix}  - \vec{0}{u^3-3\g u} +\vec{0}{\sqrt{2} \jb{\nb}^{-s}\xi}, \\ 
\u|_{t = 0} = \u_0 = \begin{pmatrix} u_0 \\ u_1 \end{pmatrix}.
\end{cases}
 \label{NLWvec}
\end{equation}
Our goal is to prove the following theorem.
\begin{theorem}\label{THM:erg}
Given $s > 0$ and $\g \in \R$, there exists a measure $\rho_s$ concentrated on a suitable Banach space $\X$ which is invariant for the Markov process associated to the flow $\Phi_t(\cdot,\xi)$ of \eqref{NLWvec}, in the sense that for every
function $F$ measurable and bounded,
\begin{align*}
\int \E[ F(\Phi_{t}(\mathbf{u}_0, \xi))]d\rho_{s}(\u_0)=\int F(\u_0)d\rho_{s}(\u_0), \,\,\,\, \text{for every} \,\, t>0.
\end{align*}
Moreover, $\rho_{s}$ is the only invariant measure concentrated on $\X$.
\end{theorem}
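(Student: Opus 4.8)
The plan is to establish existence and uniqueness of the invariant measure $\rho_s$ separately, following the strategy of Doob-type arguments combined with a priori bounds along the lines of \cite{t18erg}, but now without the luxury of an explicit Gibbs-type invariant measure.

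For \emph{existence}, I would first set up the well-posedness theory for \eqref{NLWvec}: decompose the solution as $\u = \z + \v$, where $\z = \begin{pmatrix} z \\ \dt z \end{pmatrix}$ solves the linear stochastic equation $\dt^2 z + \dt z + z - \Delta z = \sqrt{2}\jb{\nb}^{-s}\xi$ (the stochastic convolution, a Gaussian process with explicit regularity, of positive Sobolev regularity since $s>0$), and $\v$ solves a damped nonlinear wave equation with the random forcing $-(z+v)^3 + 3\g(z+v)$. The equation for $\v$ has a smoothing gain from the Duhamel integration and a defocusing cubic nonlinearity, so one obtains global-in-time solutions in an energy space $\X = \H^\s(\T^2) := H^\s(\T^2)\times H^{\s-1}(\T^2)$ for suitable $\s \in (0,1)$, with a path-by-path deterministic growth bound on $\v$ driven by norms of $\z$. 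The key is an a priori estimate: using the damping term $\dt u$, the defocusing sign of $u^3$, and an energy functional $E(\v)$ (the wave energy plus the quartic potential), one derives a Lyapunov-type inequality $\frac{d}{dt}\E[E(\v(t))] \les -c\,\E[E(\v(t))] + C$, exploiting that the stochastic forcing is regular enough (again $s>0$) to be absorbed. This gives uniform-in-time moment bounds $\sup_{t\ge 0}\E[\|\u(t)\|_{\X}^p] < \infty$ for $\u_0$ in a bounded set, hence tightness of the time-averaged laws $\frac{1}{T}\int_0^T \mathcal{L}(\Phi_t(\u_0,\xi))\,dt$; a Krylov--Bogoliubov argument then produces at least one invariant measure $\rho_s$ concentrated on $\X$, using the Feller property of the Markov semigroup $P_t$ (continuity of $\u_0 \mapsto \Phi_t(\u_0,\xi)$ in $\X$, which follows from the deterministic difference estimates for the $\v$-equation).

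For \emph{uniqueness}, the natural route is to prove that $P_t$ is \emph{strong Feller} and \emph{irreducible} on $\X$, which by Doob's theorem forces at most one invariant measure; alternatively, and perhaps more robustly in this low-regularity setting, one establishes an asymptotic coupling / Foias--Prodi type estimate showing that any two solutions are asymptotically close in law. Concretely: irreducibility should follow from a control argument — given the regularizing additive noise $\jb{\nb}^{-s}\xi$, one shows the support of $\mathcal{L}(\Phi_t(\u_0,\xi))$ contains a neighborhood of, say, a fixed point like the origin (or $(\sqrt{3\g},0)$ type states), using that one can steer the system there by a suitable noise realization and that the damping dissipates energy. Strong Feller (or its substitute) is the delicate part: one needs smoothing of $P_t$, typically via a Bismut--Elworthy--Li formula or a Malliavin-calculus argument, or — given the structure here — via a coupling estimate where the difference $\w = \u^{(1)} - \u^{(2)}$ of two solutions with the same noise satisfies a damped linear wave equation with potential $-(\text{something involving } u^{(1)}, u^{(2)})\w$; the damping yields exponential decay of $\w$ provided the nonlinear potential can be controlled, which is where the a priori bounds feed back in.

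\textbf{The main obstacle} I anticipate is precisely this uniqueness step. In the defocusing cubic setting the linearized operator around a solution is $\dt^2 \w + \dt \w + \w - \Delta \w + 3(u^2 - \g)\w = 0$, and the sign of the potential $3u^2 - 3\g$ is not favorable (it can be negative when $\g > 0$, and even when positive it does not obviously dominate the wave operator), so one cannot simply read off exponential decay of $\w$ from an energy estimate with the damping. The resolution will likely require a more clever energy functional (an $I$-method / modified-energy flavored quantity, or a hypocoercivity-type twisted norm pairing $\w$ and $\dt\w$) together with the uniform moment bounds on $\|\u(t)\|_{\X}$ to absorb the bad potential terms over long time scales, or else an asymptotic-coupling argument where one only needs the low-frequency part of $\w$ to be contracted (Foias--Prodi) while high frequencies decay by the damping. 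Making this quantitative, uniformly in time, and compatible with the roughness $s > 0$ (so that all stochastic terms live in the right spaces) is the crux of the argument; everything else — global well-posedness, Feller property, tightness, Krylov--Bogoliubov — is comparatively standard given the defocusing structure and the damping.
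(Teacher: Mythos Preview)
Your existence outline is broadly right in spirit (decomposition, energy estimate with damping, Krylov--Bogoliubov), but there is a concrete gap in the choice of state space. You propose $\X = \H^\sigma = H^\sigma \times H^{\sigma-1}$; this does not work. The well-posedness theory needs a space $X$ on which both $\|S(t)\u\|_{L^6} \lesssim \|\u\|_X$ and $\|S(t)\u\|_X \lesssim \|\u\|_X$ hold. Since $S(t)$ is unbounded on $L^p$ for $p\neq 2$, the second forces $X = \H^{\alpha'}$, and then the first forces $\alpha' \ge \tfrac{2}{3}$ by Sobolev embedding. But the solution only lives in $\H^\alpha$ for $\alpha < s$, which for small $s$ is far below $\tfrac{2}{3}$. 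The paper resolves this by introducing a bespoke Banach space $\X$ built from the linear evolution, $\|\u\|_{\X} = \sup_{t\ge 0} e^{t/8}\|S(t)\u\|_{\W^{\alpha,2/\alpha}}$, which contains genuinely rough functions while still supporting the needed $L^6$ bound along orbits. The tightness argument for Krylov--Bogoliubov then requires a compact embedding into this nonstandard space, which is a separate piece of work.

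The uniqueness strategy has a more serious gap. Your primary route---strong Feller plus irreducibility, then Doob---fails outright: the semigroup $\Pt{t}$ is \emph{not} strong Feller here. The paper proves this explicitly by exhibiting a bounded measurable $F$ (the indicator of $\H^{\alpha'}$ for $\alpha<\alpha'<s$) with $\Pt{t}F = F$ everywhere discontinuous, the obstruction being that $S(t)$ is invertible on every $\H^\sigma$, so the noise cannot regularise the initial data. Your fallback---a same-noise coupling where $\w = \u^{(1)}-\u^{(2)}$ decays via the damping---runs into exactly the sign problem you flag, and neither a twisted energy nor a Foias--Prodi low-mode argument is how the paper proceeds. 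Instead, the paper constructs an adapted \emph{Girsanov shift} $h \in L^2_{t,x}$ so that $\Phi_t(\u_2^0,\xi+h) = \Phi_t(\u_1^0,\xi) + S(t)(\u_2^0-\u_1^0) + \w(t)$, where $\w$ solves an equation \emph{designed} (through a mollification at a solution-dependent scale $\varepsilon(\w)$) to contract in $\H^1$ regardless of the sign of the potential; the cost of the shift is controlled by $\E\|h\|_{L^2}^2 \le C\|\u_1^0-\u_2^0\|_{\X}^2$. One then compares laws via Kantorovich duality for the pseudo-metrics $d_n(x,y) = 1\wedge n\|x-y\|_{\X}$ and Girsanov's theorem, obtaining a quantitative contraction $\|\mu_1-\mu_2\|_{d_n} \le 1 - K(R)$ on balls. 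This is an asymptotic-strong-Feller/asymptotic-coupling argument in the Hairer--Mattingly sense, but the mechanism---shifting the noise rather than waiting for a same-noise difference to decay---is the missing idea in your proposal.
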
 
By Birkhoff's ergodic theorem, Theorem~\ref{THM:erg} implies that the dynamics are converging to the invariant measure $\rho_s$, in the sense that for every $\u_0 \in \X$, 
\begin{equation} \label{birkhoff}
\lim_{T \to \infty} \frac 1 T \int_0^T F(\Phi_t(\u_0,\xi))dt = \int F(\u)d\rho_{s}(\u) \quad \text{a.s.}
\end{equation}
The definition of the space $\X$ will be provided in Section 2, however, we point out that the space $\X$ is rich enough to contain smooth functions (see Lemma \ref{LEM:Hcompact}). In particular, this implies that the convergence result \eqref{birkhoff} holds for every smooth choice of the initial data $\u_0$, thus allowing to obtain very precise information about the long time behaviour of the flow of \eqref{SDNLW}.

Our interest for the equation \eqref{SDNLW} arises from the recent results pertaining to the PDE construction of the Euclidean $\Phi^4$ quantum field theory. Namely, if one considers the stochastic quantisation equation 
\begin{equation} \label{SQE}
\partial_t u + u - \Delta u = u^3 - C_d \cdot u + \sqrt2\xi, 
\end{equation}
where $C_d$ is a constant depending on the dimension $d$, and $C_d = \infty$ for $d \ge 2$, then,
as time goes to infinity, a generic solution to \eqref{SQE} should converge to the (massive) $\Phi^4$-measure. A result about existence (and uniqueness) of an invariant measure for \eqref{SQE} would provide a new construction of the $\Phi^4$-measure, completing the project started by Parisi and Wu in their influential work \cite{PaWu}. 
This has recently been achieved on the torus in (fractional) dimension $d< 4$ in the papers  \cite{mw1,TW,mw2,cmw}, that show the relevant estimates to ensure the existence of an invariant measure via a Krylov-Bogolyubov argument. Together with the abstract ergodicity result of \cite{HM2}, this also allows to show an analogous version of Theorem~\ref{THM:erg} for the equation \eqref{SQE}. We also point out the papers \cite{gh1,gh2}, which provide a construction of the $\Phi^4_3$-model on the Euclidean space $\R^3$. 

In the above context, the stochastic wave equation 
\begin{equation}\label{CSQE}
\partial_t^2 u + \partial_t u + u - \Delta u = u^3 - C_d \cdot u + \sqrt2 \xi, 
\end{equation}
corresponds to the so-called canonical stochastic quantisation equation, or equivalently, to the Langevin equation for the $\Phi^4$-measure with momentum $v = \partial_t u$. Therefore, it is natural to ask if similar results to the long-time behaviour of solutions to \eqref{SQE} hold for the equation \eqref{CSQE} as well. However, due to the worse analytical properties of the wave propagator $(\partial_t^2 - \Delta)^{-1}$ compared to those of the heat propagator $(\partial_t - \Delta)^{-1}$, there are far fewer results available. 
Indeed, ergodicity for the equation \eqref{CSQE} on the one-dimensional torus has only recently been proved by the second author in \cite{t18erg}. The approach in \cite{t18erg} though heavily uses some structural property of the $\Phi^4_1$-measure and is thus not applicable when the limiting measure is unknown.

Following the approach suggested by \cite{cmw}, where $\gamma = C_d$ is an appropriate constant, the equation \eqref{SDNLW} corresponds to \eqref{CSQE} for fractional dimension $d = 2 - s$.  Equations of the form of \eqref{SDNLW} have already been considered in the literature by Barbu and Da Prato in \cite{BdP}, and Brze\'zniak, Ondrej\'at and Seidler in \cite{bos16}. 
However, when applying the results of \cite{BdP,bos16} to \eqref{SDNLW}, we obtain Theorem~\ref{THM:erg} only for $s > 1$, which would correspond to an ergodicity result for \eqref{CSQE} in fractional dimensions $d < 1$.
Therefore, our result extends the convergence of \eqref{CSQE} to the $\Phi^4$-measure to the full non-singular regime $d < 2$.

\begin{remark}
The equation \eqref{CSQE} for $d=2$ (without the damping term) has first been considered in \cite{GKO}, where Gubinelli, Koch, and Oh showed local well-posedness for smooth initial data. 
In the following paper \cite{GKOT}, Gubinelli, Koch, Oh and the second author showed global well-posedness for the same equation, and proved that the $\Phi^4_2$-measure is invariant for the semigroup generated by the flow of \eqref{CSQE}. 
However, the estimate for the growth of the solution grows with a double exponential in time. 
Compare this with the estimate \eqref{vmoments}, which is bounded as time goes to infinity. 
The lack of such a good estimate makes the arguments of this paper break down, with the main culprit being the $L^2$-estimate \eqref{hL2}.
These issues have recently been solved in the work in preparation \cite{Tolomeo3}, where building upon both the techniques of \cite{t18erg} and the techniques developed in this paper, the second author proves ergodicity of the measure $P(\Phi)_2$ for the equation
\begin{equation*}
\partial_t^2 u + \partial_t u + u - \Delta u + p(u) = \sqrt2\xi,
\end{equation*}
where $p(u)$ is any (appropriately renormalised) polynomial with odd degree and positive leading coefficient.
\end{remark}

\subsection{Mild formulation and construction of the flow}
In order to construct the flow of the equation \eqref{SDNLW}, we need to define what we mean by a solution of \eqref{SDNLW}. If one considers the linear equation,
\begin{equation}\label{SDNLWlin}
\begin{cases}
\dt^{2}u+\dt u+u-\Delta u = f(t) \\
(u,\dt u)|_{t = 0} = \u_0^T = (u_0,u_1)
\end{cases}
\qquad ( t, x) \in \R_{+} \times \T^{2}, 
\end{equation}
by variation of constants, it is easy to show that the solution to \eqref{SDNLWlin} satisfies 
\begin{equation*}
\u(t) = S(t) \u_0 + \int_0^t S(t-t') \vec{0}{f(t')} d t',
\end{equation*}
where $S(t)$ is the linear propagator for the (damped) wave equation, and it is given by the formula
\begin{equation} \label{linsol}
e^{-\frac t2}
\begin{pmatrix}\cos\big(t\jb{\nb}\big) + \frac12\frac{\sin(t\jb{\nb})}{\jb{\nb}}
&\frac{\sin(t\jb{\nb})}{\jb{\nb}} \\
-\big(\jb{\nb}+\frac1{4\jb{\nb}}\big)\sin(t\jb{\nb}) & 
\cos(t\jb{\nb}) -\frac12\frac{\sin(t\jb{\nb})}{\jb{\nb}}
\end{pmatrix}.
\end{equation}

Motivated by this, we expect the solution to \eqref{SDNLW} to be given by 
\begin{align}
\u= S(t)\u_0 +\int_{0}^{t}S(t-t') \begin{pmatrix} 0 \\ {\text{\small{$\sqrt{2}$}}}\jb{\nb}^{-s}\xi(t') \end{pmatrix} dt' -\int_{0}^{t}S(t-t')\begin{pmatrix} 0 \\ \pi_{1}( \u^{3}(t')-3\g \u(t')) \end{pmatrix}dt',
\label{duhamel}
\end{align}
where $\pi_1$ denotes the projection to the first coordinate.
We see from \eqref{duhamel} that a natural object to study is 
\begin{align}
\stick_t(\xi) :=  \int_0^t S(t-t')\vec{0}{\sqrt{2}\jb{\nb}^{-s}\xi(t')} dt',
\label{stick}
\end{align}
which is a (random) space-time distribution known as the \emph{stochastic convolution}. It is the solution to the linear damped wave equation \eqref{SDNLWlin} with $f = \xi$ and $\u_0 = \0$. Since $\xi(t')$ is only a space-time distribution, the definition \eqref{stick} does not immediately make sense. We will define the object $\stick_t(\xi)$ precisely and prove its relevant properties in Section 2.
We will sometimes write $\stick_{t}$ in place of $\stick_{t}(\xi)$, where confusion will not arise. We point out that, for each fixed $t\geq 0$, the stochastic convolution $\stick_t$ belongs to the H\"older space $C^\alpha \times C^{\alpha -1}$ only for $\alpha < s$. Therefore, the solution $\u$ to \eqref{SDNLW} will also be a low-regularity function, with $\u(t) \in H^{\alpha} \times H^{\alpha-1}$ only for $\alpha < s$. 
We note that due to the unboundedness of the linear propagator $S(t)$, it is not possible to develop a solution theory in H\"older spaces $C^\alpha \times C^{\alpha-1}$.

Motivated by \eqref{duhamel} and these considerations about the stochastic convolution~\eqref{stick}, we define a solution of $\eqref{SDNLW}$ in terms of the first-order expansion~\cite{mckean, Bourgain2, dpd}:
\begin{align} \label{soldef}
\u(t) = \Phi_t(\u_0,\xi) =S(t)\u_0 +\stick_{t}(\xi)+\v(t),
\end{align}
where $\v=\begin{pmatrix} v \\ \dt v \end{pmatrix}$ solves the equation
\begin{align}
\v(t) = - \int_0^t S(t-t') \vec{0}{\NN_{\g}[\pi_1( S(t')\u_0+\stick_{t'}(\xi)+\v(t'))]  } dt',
\label{veqn}
\end{align}
with 
\begin{align*}
\NN_{\g}[v]:=v^{3}-3\g v.
\end{align*}
After appropriately defining the space $\X$ in Section 2, this decomposition allows us to prove local well-posedness of \eqref{SDNLW} on the space $\X$, while providing at the same time the extra information that $\v$ belongs to the Sobolev space $H^1 \times L^2$. See Proposition \ref{PROP:LWP} for a precise statement. In Section 3.2, we show the following global well-posedness result. 
\begin{theorem} \label{THM:GWP}
Let $\u_0 \in \X$. Then for every $T > 0$, there exists a unique function \linebreak $\u:[0,T] \to \X$ such that $\u$ solves \eqref{SDNLW} in the sense of \eqref{soldef}, and moreover
\begin{equation} \label{gwpintro}
\sup_{0 \le t \le T} \| \v(t) \|_{H^1 \times L^2} < +\infty \quad \text{a.s. }
\end{equation}
\end{theorem}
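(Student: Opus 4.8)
The plan is the standard one for globalising a damped, defocusing wave flow: upgrade the local theory of Proposition~\ref{PROP:LWP} by a global-in-time a priori bound coming from an energy estimate. By Proposition~\ref{PROP:LWP} there is, for each $\u_0\in\X$, a maximal existence time $T^*=T^*(\u_0,\xi)\in(0,+\infty]$ and a unique solution $\u$ of the form \eqref{soldef} on $[0,T^*)$ with $\v\in C([0,T^*);H^1\times L^2)$, together with the blow-up alternative: if $T^*<+\infty$ then $\|\v(t)\|_{H^1\times L^2}\to+\infty$ as $t\uparrow T^*$. It therefore suffices to prove that, almost surely, $\sup_{t\in[0,T\wedge T^*)}\|\v(t)\|_{H^1\times L^2}<+\infty$ for every $T>0$; this forces $T^*=+\infty$ and yields \eqref{gwpintro}, while uniqueness on $[0,T]$ follows from the local uniqueness and a continuation argument.

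For the a priori bound, write $v:=\pi_1\v$ and let $\Theta:=\pi_1\big(S(\cdot)\u_0+\stick_\cdot(\xi)\big)$ denote the rough part of the solution; freezing the noise, $v$ solves on $[0,T^*)$ the equation $\dt^2 v+\dt v+v-\Delta v+(v+\Theta)^3-3\g(v+\Theta)=0$ with $(v,\dt v)|_{t=0}=(0,0)$. I would test this against the damping-adapted momentum $h:=\dt v+\tfrac12 v$: the linear part decouples as $\dt v+\tfrac12 v=h$ and $\dt h+\tfrac12 h=-\jb{\nb}^2 v+F$ with $F:=-(v+\Theta)^3+3\g(v+\Theta)$, and with the modified energy
\begin{equation*}
\mathcal{E}(t):=\tfrac12\|h(t)\|_{L^2}^2+\tfrac12\|\jb{\nb}v(t)\|_{L^2}^2+\tfrac14\|v(t)\|_{L^4}^4-\tfrac{3\g}{2}\|v(t)\|_{L^2}^2
\end{equation*}
one is led, using $\langle h,-v^3\rangle=-\tfrac{d}{dt}\tfrac14\|v\|_{L^4}^4-\tfrac12\|v\|_{L^4}^4$ and $3\g\langle h,v\rangle=\tfrac{3\g}{2}\tfrac{d}{dt}\|v\|_{L^2}^2+\tfrac{3\g}{2}\|v\|_{L^2}^2$, to the identity
\begin{equation*}
\frac{d}{dt}\mathcal{E}+\tfrac12\Big(\|h\|_{L^2}^2+\|\jb{\nb}v\|_{L^2}^2+\|v\|_{L^4}^4\Big)=\big\langle h,\,-3v^2\Theta-3v\Theta^2-\Theta^3+3\g\Theta\big\rangle+\tfrac{3\g}{2}\|v\|_{L^2}^2 .
\end{equation*}
The point of the corrected momentum $h$ is that the dissipation on the left is now the \emph{full} quantity $\|h\|_{L^2}^2+\|\jb{\nb}v\|_{L^2}^2+\|v\|_{L^4}^4\gtrsim\|\dt v\|_{L^2}^2+\|v\|_{H^1}^2+\|v\|_{L^4}^4$ — not merely $\|\dt v\|_{L^2}^2$ — with the extra $\|v\|_{L^4}^4$ furnished by the defocusing sign; this is the $L^2$-estimate for $h$ referred to in the remark above. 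One then bounds the right-hand side by Hölder's and Young's inequalities, e.g.\ $|\langle h,v^2\Theta\rangle|\lesssim\|h\|_{L^2}\|v\|_{L^4}^2\|\Theta\|_{L^\infty}\le\tfrac1{16}\|h\|_{L^2}^2+C\|\Theta\|_{L^\infty}^2\|v\|_{L^4}^4$ and similarly for the remaining terms, absorbing $\|\dt v\|_{L^2},\|v\|_{H^1},\|v\|_{L^4}$ into the dissipation and using the coercivity $\mathcal{E}(t)+C(1+\g^2)\gtrsim\|\v(t)\|_{H^1\times L^2}^2$, to reach a differential inequality $\tfrac{d}{dt}\mathcal{E}(t)\le A(t)\big(1+\mathcal{E}(t)\big)$, where $A(t)$ is a fixed polynomial in $\g$ and in suitable norms of $\Theta(t)$ as an honest bounded function — finite because $s>0$.

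The last, purely probabilistic, ingredient is $\int_0^T A(t)\,dt<+\infty$ almost surely; this follows from the properties of the stochastic convolution established in Section~2 (in particular $\stick_\cdot(\xi)\in C([0,T];C^\alpha\times C^{\alpha-1})$ a.s.\ for every $\alpha<s$, with finite moments) together with the action of $S(t)$ on $\u_0\in\X$ — the role of $\X$ being to guarantee that the linear contribution $\pi_1 S(\cdot)\u_0$ enters $\Theta$ with the same (boundedness) properties. Granting this and noting $\mathcal{E}(0)=0$, Gronwall's inequality gives $\sup_{[0,T\wedge T^*)}\mathcal{E}(t)<+\infty$, hence $\sup_{[0,T\wedge T^*)}\|\v(t)\|_{H^1\times L^2}<+\infty$, almost surely, and the blow-up alternative yields $T^*=+\infty$ and \eqref{gwpintro}. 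I expect the main difficulty to be the estimate of the right-hand side of the energy identity: because $H^1$ is energy-critical in two dimensions, the multilinear bounds for the interaction of the $H^1$-regular part $v$ with the low-regularity part $\Theta$ must be sharp enough to use the \emph{full} dissipation simultaneously, so that the inequality closes at linear order in $\mathcal{E}$ (which is also what ultimately gives the uniform-in-time moment bound needed later). This is exactly where the hypothesis $s>0$ is used, and, as noted in the remark, the scheme breaks down at $s=0$.
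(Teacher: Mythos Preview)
Your energy scheme is the right shape, but there is a genuine gap in the estimate of the cubic interaction $\langle h,\,v^2\Theta\rangle$. You bound it by $\|h\|_{L^2}\|v\|_{L^4}^2\|\Theta\|_{L^\infty}$ and assert that $\Theta=\pi_1\big(S(\cdot)\u_0+\stick_\cdot\big)$ is an ``honest bounded function'', with the space $\X$ supplying $L^\infty$ control on $\pi_1 S(t)\u_0$. This is false: membership in $\X$ gives only $S(t)\u_0\in\Wal$, and in two dimensions $W^{\al,2/\al}$ sits exactly at the critical Sobolev exponent and does \emph{not} embed into $L^\infty$. The stochastic part $\pi_1\stick_t$ is indeed in $C^\al\subset L^\infty$, but the deterministic linear piece is not, so your Gronwall coefficient $A(t)$ is infinite for generic $\u_0\in\X$. (The introduction flags precisely this failure of $\X\hookrightarrow L^\infty$ as the source of the additional difficulty over~\cite{t18erg}.) The gap cannot be repaired by placing $\pi_1 S(t)\u_0$ in a high but finite $L^q$: a short Gagliardo--Nirenberg computation shows that any H\"older splitting of $\int(\dt v)\,v^2\Theta$ with $\Theta\in L^q$, $q<\infty$, forces a power of $\|v\|_{H^1}$ that makes the resulting term genuinely superlinear in $\mathcal E$ after Young, so the differential inequality does not close at linear order.

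The paper's remedy is an integration by parts in time for exactly this term (the trick from~\cite{BOP3}): writing
\[
\int(\dt v)\,v^2\Theta=\tfrac13\,\tfrac{d}{dt}\!\int v^3\Theta-\tfrac13\!\int v^3\,\dt\Theta,
\]
the total derivative is absorbed into a modified energy $F$, and the remaining piece is estimated by duality, pairing $\dt(\pi_1 S(t)\u_0)=\pi_2 S(t)\u_0\in W^{\al-1,2/\al}$ against $v^3$ placed in $W^{1-\al,(1-\al/2)^{-1}}$. Here the full $H^1$ regularity of $v$ (not just $L^4$) is used, and one obtains $\big|\!\int v^3\,\dt(\pi_1 S(t)\u_0)\big|\lesssim e^{-t/8}\|\u_0\|_{\X}\,E(\v)$; see \eqref{intbyparts} and Lemma~\ref{LEM:E5}. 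This is what produces the Gronwall factor $e^{C\|\u_0\|_{\X}}$ in the function $\eta(t;\|\u_0\|_{\X})$ of Proposition~\ref{PROP:GWP}, in place of the (infinite) factor $\exp\!\int_0^T\|\Theta\|_{L^\infty}^2$ your argument would require.
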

See Proposition \ref{PROP:GWP} for a more precise statement. Following the techniques introduced by \cite{BTglobal}, the proof of this theorem will rely on an energy estimate based on the functional 
\begin{align*}
\begin{split}
E(\v):= \frac{1}{2}\int (\dt v)^{2} +\frac{1}{2}\int v^2 +\frac{1}{2}\int |\nb v|^{2} +\frac{1}{4}\int v^4 +\frac{1}{8}\int (v+\dt v)^{2},
\end{split}
\label{energy}
\end{align*}
together with an integration by part tricks introduced in \cite{BOP3}. Similar techniques are used in the paper by the second author \cite{t18erg}. However, we point out that due to the lack of the Sobolev embedding $H^1 \embeds L^\infty$ (or similarly, of the embedding $\X \embeds L^\infty$) the analysis will be more complicated than the one in \cite{t18erg}.

\subsection{Existence and uniqueness of the invariant measure}
Due to the actual definition of the solutions to \eqref{SDNLW}, i.e.\ the decomposition \eqref{soldef}, it is a priori unclear if the operators 
\begin{equation*}
\Pt{t} F (\u_0) := \E[F(\Phi_t(\u_0,\xi))]
\end{equation*}
actually define a Markov semigroup on bounded functions $F:X^{\al}\to \R$. The proof of this is carried out in Section 4 (see Proposition \ref{PROP:markov}). 
This allows us to use standard arguments to show the existence of an invariant measure $\rho_s$. Indeed, in Proposition \ref{PROP:KryBo}, we implement the estimate coming from \eqref{gwpintro}, together with some considerations expressed in Section 2, into a Krylov-Bogolyubov argument. 

However, similarly to what happens in \cite{t18erg}, the flow does not satisfy the strong Feller property (see Proposition \ref{PROP:notsf}). This is due to the fact that the operator $S(t)$ is invertible on the Sobolev spaces $H^{\s} \times H^{\s -1}$, which is a big difference with the case of parabolic stochastic equations. Therefore, the techniques of \cite{TW, HM2} are not available to us.
In order to deal with this problem, we adapt the approach of the work \cite{HM1}. In this influential paper, Hairer and Mattingly introduced the concept of ``asymptotic strong Feller", exactly to deal with Markov semigroups that do not have the strong Feller property.
For two probabilities $\mu_1, \mu_2$, we consider the family of distances 
\begin{equation} \label{dndef}
d_n(\mu_1,\mu_2) := \inf_{\pi \in \Pi(\mu_1,\mu_2)} \int_{\X \times \X} 1 \wedge n\| x - y\|_{\X} d \pi(x,y), 
\end{equation}
where $\Pi(\mu_1,\mu_2)$ is the family of probability measures on $\X \times \X$ with marginals given by $\mu_1$ and $\mu_2$, respectively.
In order to build a plan $\pi$ to test the infimum in \eqref{dn}, we construct a so-called asymptotic coupling. More precisely, we adapt the strategy delineated in \cite{HM2} in order to prove the strong Feller property, 
and show that for every $\u_1^0, \u_2^0 \in \X$, there exists a shift $h \in L^2_{t,x}(\R_{+}\times \T^2)$, adapted with respect to the filtration induced  by $\xi$, such that 
\begin{gather*}
\|\Phi_{t}(\u_1^0, \xi ) - \Phi_{t}(\u_2^0, \xi + h) \|_{\X} \le e^{-ct} \|\u_1^0 - \u_2^0\|_{\X},\\
\E \big[ \| h \|_{L^2_{t,x}(\R_{+} \times \T^2)}^2 \big]\le C(\u_1^0, \u_2^0) \|\u_1^0 - \u_2^0\|_{\X}.
\end{gather*}
This is carried out in Proposition \ref{PROP:shift} and Lemma \ref{LEM:h}. In Section \ref{SEC:proofThm1}, we rewrite \eqref{dndef} using Kantorovich duality, and we use the existence of such an $h$ to show unique ergodicity for the measure $\rho_s$. Moreover, adapting the arguments of \cite{HM3}, we can show the following convergence result.
\begin{theorem} \label{THM:conv}
For every $\u_0 \in \X$, and for every $n \in \mathbb{N}$, we have that 
\begin{equation} \label{EQN:WassConv}
\lim_{t \to \infty} d_n(\Pt{t}^\ast \delta_{\u_0}, \rho_s) = 0.
\end{equation}
\end{theorem}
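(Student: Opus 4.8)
The plan is to deduce Theorem~\ref{THM:conv} from a geometric contraction estimate for the Markov semigroup $\Pt{t}$ in a suitably weighted Kantorovich distance, in the spirit of the weak form of Harris' theorem of \cite{HM3}. Let $W\colon\X\to[1,\infty]$ be a Lyapunov-type functional (discussed in the next paragraph) and define
\begin{equation*}
\wt d(\mu,\nu):=\inf_{\pi\in\Pi(\mu,\nu)}\int_{\X\times\X}\big(1\wedge\norm{x-y}_{\X}\big)\big(1+W(x)+W(y)\big)\,d\pi(x,y).
\end{equation*}
The aim is to show that there exist $T_0>0$ and $\al\in(0,1)$ such that $\wt d(\Pt{T_0}^\ast\mu,\Pt{T_0}^\ast\nu)\le\al\,\wt d(\mu,\nu)$ for all probability measures $\mu,\nu$ on $\X$. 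Granting this, the invariance $\Pt{t}^\ast\rho_s=\rho_s$ from Theorem~\ref{THM:erg} and iteration give $\wt d(\Pt{t}^\ast\delta_{\u_0},\rho_s)\les\al^{t/T_0}\big(1+W(\u_0)+\int W\,d\rho_s\big)$, which is finite (and hence tends to $0$) because $W$ is finite on $\X$ and $\int W\,d\rho_s<\infty$, the latter following from the Krylov--Bogolyubov construction of $\rho_s$ in Proposition~\ref{PROP:KryBo}, the bound \eqref{vmoments}, and lower semicontinuity of $W$. Since $1\wedge\norm{x-y}_{\X}\ge\tfrac1n\big(1\wedge n\norm{x-y}_{\X}\big)$ for all $x,y$ and $1+W+W\ge1$, we have $\wt d\ge\tfrac1n\,d_n$, so $d_n(\Pt{t}^\ast\delta_{\u_0},\rho_s)\to0$ for every $\u_0\in\X$ and every $n\in\N$, i.e.\ \eqref{EQN:WassConv}.

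The functional $W$ is built from the uniform-in-time a priori bound \eqref{vmoments}, which here plays the role of the drift (Lyapunov) condition in \cite{HM3}: along the decomposition \eqref{soldef}, \eqref{vmoments} together with the exponential decay of $S(t)$ and the uniform-in-time moment bounds for the stochastic convolution $\stick_t$ from Section~2 shows that the nonlinear part $\v(t)$ stays bounded in $H^1\times L^2$ in expectation, uniformly in $t$, and (via Lemma~\ref{LEM:Hcompact}) that $\{\Pt{t}^\ast\delta_{\u_0}\}_{t\ge1}$ is tight in $\X$. Turning this into an honest state-space functional $W$ with a drift property of the form $\Pt{t}W\le\kk^{t}W+K$ with $\kk\in(0,1)$ requires some care, since \eqref{vmoments} controls the auxiliary variable $\v$ rather than the solution itself and since the embedding $\X\embeds L^\infty$ is not available.

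The core of the argument is the one-step contraction. For $x,y$ in a sublevel set $\{W\le R\}$, one tests $\wt d(\Pt{T_0}^\ast\delta_x,\Pt{T_0}^\ast\delta_y)$ with the asymptotic coupling $(\Phi_{T_0}(x,\xi),\Phi_{T_0}(y,\xi+h))$ provided by Proposition~\ref{PROP:shift} and Lemma~\ref{LEM:h}: on the event on which, by Girsanov's theorem, one may identify the law of $\Phi_{T_0}(y,\xi+h)$ with $\Pt{T_0}^\ast\delta_y$, the pathwise bound gives $\norm{\Phi_{T_0}(x,\xi)-\Phi_{T_0}(y,\xi+h)}_{\X}\le e^{-cT_0}\norm{x-y}_{\X}$, while the complementary event has $\prob$-probability $\les\big(\E\norm{h}_{L^2_{t,x}([0,T_0]\times\T^2)}^2\big)^{1/2}\le(C_R\norm{x-y}_{\X})^{1/2}$ by Pinsker's inequality and the bound of Lemma~\ref{LEM:h}, and there one uses an arbitrary coupling together with $1\wedge\norm{\cdot}_{\X}\le1$; controlling the weight $1+W+W$ after time $T_0$ by the drift estimate, this bounds $\wt d(\Pt{T_0}^\ast\delta_x,\Pt{T_0}^\ast\delta_y)$ by roughly $e^{-cT_0}$ plus the cost of the bad event, times $\wt d(\delta_x,\delta_y)$. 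For $x$ or $y$ outside $\{W\le R\}$, one instead uses the drift estimate to return into $\{W\le R\}$, paying a factor that a sufficiently large $R$ renders $<1$. Interpolating between the two regimes, as in the proof of the (weak) Harris theorem, yields the required $\al<1$.

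I expect the main obstacle to be exactly the verification that $\al$ can be taken strictly less than $1$: one must weigh the pathwise exponential contraction $e^{-ct}$ coming from the damping against the entropic cost of the Girsanov shift $h$, which through Pinsker's inequality is governed by the bound $\E\norm{h}_{L^2_{t,x}}^2\les\norm{x-y}_{\X}$ of Lemma~\ref{LEM:h}; it is the size of this cost relative to the truncated distance $1\wedge\norm{x-y}_{\X}$ that decides whether $T_0$, $R$, and the truncation scale can be tuned so as to close the estimate, and it is for this reason that one must work with the weighted, truncated distance $\wt d$ rather than with $\norm{\cdot}_{\X}$ directly. Together with the task of extracting the Lyapunov structure from the non-explicit, $H^1\times L^2$-flavoured bound \eqref{vmoments} in the absence of $\X\embeds L^\infty$, this is where the substance of the proof lies; the remaining assembly into a convergence statement is the same bookkeeping as in \cite{HM3}.
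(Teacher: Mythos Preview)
Your proposal follows the weak Harris theorem framework of \cite{HM3}, aiming for a geometric contraction $\wt d(\Pt{T_0}^\ast\mu,\Pt{T_0}^\ast\nu)\le\al\,\wt d(\mu,\nu)$ with $\al<1$. This would yield \emph{exponential} convergence in $\wt d$, and the paper explicitly states (see the introduction and Remark~\ref{convSpeed}) that such a rate is out of reach with the available estimates. The obstruction is concrete: the drift condition $\Pt{T_0}W\le\kappa W+K$ must hold at a \emph{fixed} time $T_0$, but the bound \eqref{vmoments} carries the prefactor $\eta(T_0;\|\u_0\|_{\X})^{1/2}$, which for $\|\u_0\|_{\X}$ large (so that $T_0<\tau(\|\u_0\|_{\X})\sim\log\|\u_0\|_{\X}$) equals $e^{16C\|\u_0\|_{\X}}$. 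Hence $\E[\|\Phi_{T_0}(\u_0,\xi)\|_{\X}^p]$ grows like $e^{cp\|\u_0\|_{\X}}$, which rules out any polynomial $W$; an exponential $W$ would in turn require exponential moments of $\|\v(T_0)\|_{\H^1}$ that are not available. The very difficulty you flag in your last paragraph---``extracting the Lyapunov structure from \eqref{vmoments}''---is not a technicality but a genuine gap: no such $W$ exists with the estimates at hand. The same exponential blow-up enters the Girsanov cost $C(R)$ in \eqref{hL2} (via $\eta$ in the proof of Proposition~\ref{LEM:h}), which is why Remark~\ref{convSpeed} computes that balancing it against the contraction yields at best a sub-polynomial rate.

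The paper circumvents this entirely with a soft, non-quantitative argument. It runs two \emph{independent} copies $\Phi_t(\u_0,\xi)$, $\Phi_t(\u_1^0,\xi')$ and uses the Krylov--Bogolyubov convergence (Proposition~\ref{PROP:KryBo}, applied to the product dynamics) to show that almost surely there is a stopping time $\tau$ at which both trajectories lie in $B_R(\0)$ and are within $\eps$ of each other. At that moment one invokes the strong Markov property together with the one-step estimate of Proposition~\ref{PROP:flowclose}, whose right-hand side $(1-K(R))(1\wedge K'(R)\|\cdot\|_{\X}^{1/2})$ is of order $\eps^{1/2}$ when the two starting points are $\eps$-close. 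This gives $\limsup_{t\to\infty}\|\Pt{t}^\ast\delta_{\u_0}-\rho_s\|_{d_n}\les K'(R)\eps^{1/2}$ for every $\eps>0$, hence \eqref{EQN:WassConv}. The trade-off is clear: your route would, if it closed, give a rate; the paper's route trades any rate for an argument that is robust to the exponential loss in $\eta$, replacing the Lyapunov drift by the almost sure recurrence coming from Birkhoff's theorem applied to the (already established) unique invariant measure.
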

The limit \eqref{EQN:WassConv} can be rewritten in the following suggestive way: for every $\u_0 \in \X$, there exists a stochastic process $X_{\u_0}(t)$ such that $\L(X_{\u_0}(t)) = \rho_s$, and 
\begin{equation*}
\lim_{t \to \infty} \| \Phi_t(\u_0, \xi) - X_{\u_0}(t) \|_{\X} = 0 \hspace{7pt} \text{a.s.}
\end{equation*}
We point out that the convergence in \eqref{EQN:WassConv} is not quantitative. While it would be a priori possible to obtain quantitative estimates for it, because of the exponential term in the long time estimate \eqref{vmoments}, 
we do not expect such estimates to be very strong, with the resulting convergence rate being \emph{at best} sub-polynomial (see Remark \ref{convSpeed}). Compare this with the results of papers such as \cite{HM3,TW}, that show \emph{exponential} convergence to equilibrium. Because of the damping term in the equation \eqref{SDNLW}, we would expect the convergence rate to be exponential in this case as well, however, because of the low regularity of the problem (when $s$ approaches 0), we consider this to be out of reach with the current techniques. 
Therefore, we decided to limit the technical complexity of the paper, and not to pursue quantitative estimates for \eqref{EQN:WassConv}.

\begin{remark}
Despite heavily relying on the techniques of \cite{HM1,HM3}, we do not explicitly prove the asymptotic strong Feller property for the semigroup $\Pt{t}$, nor irreducibility of the flow. 
The asymptotic strong Feller property is an easy consequence of Proposition \ref{PROP:flowclose}, while irreducibility can be proven with similar arguments to the ones used in this paper. 
However, as it turns out, the estimate \eqref{diffflow} in Proposition \ref{PROP:flowclose} is enough for the proofs of Theorems \ref{THM:erg} and \ref{THM:conv}. Therefore, we decided to follow a more direct approach, and not to refer explicitly to these concepts.
\end{remark}

\begin{remark}\rm \label{RMK:2DSDNLW}
In~\cite{t18erg}, the second author studied a class of stochastic damped wave equations with cubic defocusing nonlinearity and space-time white noise forcing. In particular, the following equations were considered on $\T^2$:
\begin{align}
\partial_t^2 u + \partial_t u + u +(-\Delta)^{1+\frac{s}{2}}\, u +u^3 =\sqrt{2}\, \xi, \label{2DSDNLW}
\end{align} 
where $s>0$, the initial data is distributed according to the invariant Gibbs measure 
\begin{align}
d\rho(u,u_{t})``="Z^{-1} \exp \bigg(  -\frac{1}{2}\int u^2+u_{t}^{2}+| (-\Delta)^{\frac{1}{2}+\frac{s}{4}}u|^2 dx-\frac{1}{4}\int u^4 dx\bigg) du du_{t}, \label{Gibbs}
\end{align}
and we have restricted to two spatial dimensions for easier comparison to \eqref{SDNLW}. 
The main result in \cite{t18erg} is the uniqueness of the Gibbs measure \eqref{Gibbs} under the flow of \eqref{2DSDNLW}. We note that the approach in this paper provides an alternative proof of this result; see Remark~\ref{RMK:t18erg} for more details. However, this argument crucially relies on the good long time bounds for the flow given in Proposition \ref{PROP:GWP}, whilst the approach in~\cite{t18erg} proves ergodicity of the invariant measure \eqref{Gibbs} without the need of such quantitative bounds. 
\end{remark}

\noi
{\bf Notations.}
For a function $f: \T^2 \sim [0,1]^2 \to \C$, its Fourier series is given by $\widehat f: \Z^2 \to \C$, where
\begin{equation*}
\widehat f (n) := \int_{\T^2} f(x) e^{-2\pi i n \cdot x} d x.
\end{equation*}
This leads to the inversion formula 
\begin{equation*}
f(x) := \sum_{n\in\Z^2} \widehat f(n) e^{2\pi i n \cdot x}.
\end{equation*}
Moreover, for a function $F: \C^2 \to \C$, the expression $F(\nabla)u$ will denote the distribution with Fourier series given by
\begin{equation*}
\widehat{F(\nabla) u}(n) := F(2 \pi i n) \widehat u (n).
\end{equation*}
For $x \in \R^n$, we denote
\begin{equation*} 
\jb{x} := \big(\tfrac 34 + |x|^2\big)^\frac 12.
\end{equation*}
In order to make many of the computations of this paper rigorous, it is necessary to consider a suitable finite dimensional truncation of \eqref{NLWvec}. Given $N\in \N\cup \{0\}$, we define $P_{\leq N}$ as the sharp projection onto frequencies in the square $[-N,N]^{2}$, that is, 
\begin{align*}
P_{\leq N}\u(x)= \frac{1}{(2\pi)^{2}} \sum_{\substack{n\in \Z^{2}\\ \max_{j}|n_j|\leq N}} \ft{\u}(n)e^{in\cdot x}.
\end{align*}
We also define $P_{>N}:=\text{Id}-P_{\leq N}$ and for $N=-1$, extend these projections to $P_{\leq -1}=0$ and $P_{>-1}=\text{Id}$.
With a slight abuse of notation, we define the truncated system of \eqref{NLWvec} by 
\begin{equation}
\begin{cases}
\partial_t \vec{u}{\dt u}  &= -\begin{pmatrix} 0 & -1 \\  1-\Delta & 1 \end{pmatrix} \begin{pmatrix} u \\ \dt u \end{pmatrix}  - P_{\leq N} \vec{0}{(P_{\leq N}u)^3-3\g P_{\leq N}u} = \vec{0}{\sqrt{2} \jb{\nb}^{-s}\xi} \\
 \begin{pmatrix} u \\ \dt u \end{pmatrix}(0) & = \begin{pmatrix} u_0 \\ u_1 \end{pmatrix}.
\end{cases}
 \label{NLWvectrunc}
\end{equation}
Writing solutions $\u_{N}$ to \eqref{NLWvectrunc} as 
\begin{align*}
\u_N =S(t)\u_0 +\stick_{t}(\xi)+\v_N,
\end{align*}
we see that $\v_N$ solves 
\begin{align}
\v_{N}(t) = - \int_0^t S(t-t')P_{\leq N} \vec{0}{\NN_{\g}[\pi_1 P_{\leq N}(S(t')\u_0+\stick_{t'}(\xi)+\v_{N}(t'))]  } dt'.
\label{veqnN}
\end{align}

\section{Preliminaries}
We recall the following version of the fractional Leibniz inequality, which will be useful throughout the paper.
\begin{lemma}[Lemma 3.4, \cite{GKO}]\label{LEM:fraclieb}
Let $0\leq \s\leq 1$. Suppose that $1<p_j, q_j ,r<\infty$, $\frac{1}{p_{j}}+\frac{1}{q_j}=\frac{1}{r}$, $j=1,2$. Then, we have 
\begin{align}
\| \jb{\nb}^{\s}(fg)\|_{L^{r}(\T^2)}\les \| \jb{\nb}^{\s}f\|_{L^{p_1}(\T^2)}\|g\|_{L^{q_1}(\T^2)}+\|f\|_{L^{p_2}(\T^2)}\|\jb{\nb}^{\s}g\|_{L^{q_2}(\T^2)}. \label{fraclieb}
\end{align}
\end{lemma}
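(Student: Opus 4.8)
The plan is to prove \eqref{fraclieb} by a paraproduct decomposition together with Littlewood--Paley theory on $\T^2$. For $\s = 0$ the estimate is just H\"older's inequality, so assume $0 < \s \le 1$. Fix a Littlewood--Paley partition of unity $1 = \sum_{j \ge 0} \psi_j$ on $\Z^2$, with $\psi_j$ supported in $|n| \sim 2^j$ for $j \ge 1$ and in $|n| \les 1$ for $j = 0$; write $f_j := \psi_j(\nb) f$, $f_{<j} := \sum_{k \le j-3} f_k$, and similarly for $g$. Then I would decompose
\begin{equation*}
fg = \sum_j f_{<j}\, g_j + \sum_j f_j\, g_{<j} + \sum_{|j-k|\le 2} f_j\, g_k =: \Pi_1 + \Pi_2 + \Pi_3.
\end{equation*}
For the low--high term $\Pi_1$, each summand $f_{<j} g_j$ has Fourier support in $|n| \sim 2^j$, so $\jb{\nb}^{\s}\Pi_1$ is, up to a harmless frequency-localised factor, $\sum_j 2^{j\s} f_{<j} g_j$. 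I would then apply the square function characterisation of $L^r$ (valid since $1 < r < \infty$), the pointwise bound $|f_{<j}| \les Mf$ by the Hardy--Littlewood maximal function, H\"older with exponents $p_2, q_2$, the maximal inequality on $L^{p_2}$, and the square function characterisation of $\|\jb{\nb}^{\s} g\|_{L^{q_2}}$, to get $\|\jb{\nb}^{\s}\Pi_1\|_{L^r} \les \|f\|_{L^{p_2}}\|\jb{\nb}^{\s} g\|_{L^{q_2}}$. The high--low term $\Pi_2$ is symmetric and yields $\|\jb{\nb}^{\s}\Pi_2\|_{L^r} \les \|\jb{\nb}^{\s} f\|_{L^{p_1}}\|g\|_{L^{q_1}}$.

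The substantive point is the high--high (resonant) term $\Pi_3$: now a summand $f_j g_k$ with $|j-k|\le 2$ has Fourier support only in $|n| \les 2^j$, so the factor $\jb{\nb}^{\s}$ cannot be read off from a single block. I would write $\jb{\nb}^{\s}\Pi_3 = \sum_l \psi_l(\nb)\jb{\nb}^{\s}\big(\sum_{j \gtrsim l}\sum_{|j-k|\le 2} f_j g_k\big)$, use the square function in $l$ to bound the $L^r$ norm by $\big\|\big(\sum_l 2^{2l\s}\big|\sum_{j \gtrsim l,\, |j-k|\le 2} f_j g_k\big|^2\big)^{1/2}\big\|_{L^r}$, and exploit $\s > 0$ through the geometric sum $\sum_{l \les j} 2^{l\s} \les 2^{j\s}$ to transfer the derivative onto the higher-frequency factor $f_j$. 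A Cauchy--Schwarz in $j$ against $\big(\sum_j |g_j|^2\big)^{1/2}$ (absorbing $g_{\sim j}$ by either $Mg$ or the $g$-square function, as convenient) then controls this by $\big\|\big(\sum_j 2^{2j\s}|f_j|^2\big)^{1/2}\cdot\big(\sum_j |g_j|^2\big)^{1/2}\big\|_{L^r}$, and one last application of H\"older with exponents $p_1, q_1$ plus the square function characterisations of $\|\jb{\nb}^{\s} f\|_{L^{p_1}}$ and $\|g\|_{L^{q_1}}$ gives $\|\jb{\nb}^{\s}\Pi_3\|_{L^r} \les \|\jb{\nb}^{\s} f\|_{L^{p_1}}\|g\|_{L^{q_1}}$. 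Summing the three contributions yields \eqref{fraclieb}.

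The step I expect to be the main obstacle is precisely this resonant term: carefully organising the summation in the Littlewood--Paley parameter $l$ against the output frequencies, using $\s > 0$ to perform the geometric sum, and verifying that all implicit constants are uniform in $f$ and $g$; the hypothesis $1 < p_j, q_j, r < \infty$ is exactly what the square function and Hardy--Littlewood maximal bounds require. For the specific range $0 \le \s \le 1$ a lighter alternative is available: at $\s = 1$ one uses $\|\jb{\nb}(fg)\|_{L^r} \sim \|fg\|_{L^r} + \|\nb(fg)\|_{L^r}$ (boundedness of Riesz transforms on $L^r$), the Leibniz rule $\nb(fg) = (\nb f)g + f\nb g$, H\"older, and $\|\nb f\|_{L^{p_1}} \les \|\jb{\nb} f\|_{L^{p_1}}$, and then interpolates (Stein's analytic interpolation applied to the family $z \mapsto \jb{\nb}^{z}(fg)$) between $\s = 0$ and $\s = 1$. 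One may also deduce \eqref{fraclieb} from the Euclidean Kato--Ponce inequality on $\R^2$ via a periodisation/transference argument for the associated bilinear Fourier multiplier, but the paraproduct proof above is the most self-contained.
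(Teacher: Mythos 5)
The paper does not prove this lemma; it is cited verbatim as Lemma~3.4 of Gubinelli--Koch--Oh \cite{GKO}, who in turn attribute the fractional Leibniz rule to the standard Kato--Ponce/Coifman--Meyer literature. So there is no in-paper argument to compare yours against. Your paraproduct argument is the standard proof and is, in outline, correct; the low--high and high--low pieces are handled as expected. The resonant piece $\Pi_3$ is right in spirit but the bookkeeping is compressed: a plain Cauchy--Schwarz in $j$ against $\big(\sum_j|g_j|^2\big)^{1/2}$ leaves a divergent $\sum_l 2^{2l\sigma}$ outside; what one actually needs is a weighted Cauchy--Schwarz (or discrete Young/Schur) in $j$ with the geometric weight $2^{-(j-l)\sigma}$, which is precisely where $\sigma>0$ enters, together with the pointwise bound $|\psi_l(\nb)(f_jg_{\sim j})|\les M(f_jg_{\sim j})$ and the Fefferman--Stein vector-valued maximal inequality to re-enter the square function. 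The hypotheses $1<p_j,q_j,r<\infty$ are exactly what these ingredients require, as you note, and the transference-from-$\R^2$ route via bilinear de Leeuw is also legitimate. By contrast, the suggested ``lighter'' alternative via Stein interpolation is not a complete argument as stated: the right-hand side at $\sigma=0$ is a single product while at $\sigma=1$ it is a sum of two terms, and analytic interpolation applied naively to the family $z\mapsto\jb{\nb}^z(fg)$ does not produce the additive form of the bound at intermediate $\sigma$; one would instead have to interpolate a commutator such as $[\jb{\nb}^\sigma,g]$ or appeal to genuinely bilinear complex interpolation, at which point the argument is no longer lighter than the paraproduct one.
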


\subsection{Function spaces}
We define the  Sobolev spaces $W^{\al,p}$ and $\W^{\al,p}:=W^{\al,p}\times W^{\al-1,p}$ via the norms
\begin{align*}
\| u\|_{W^{\al,p}}:= \| \jb{\nabla}^\al u\|_{L^p} \quad \text{and} \quad \| \u \|_{\W^{\al, p}}^p := \| \jb{\nabla}^\al u\|_{L^p}^p +  \| \jb{\nabla}^{\al-1}u_t\|_{L^p}^p,
\end{align*}
with the usual modification when $p=\infty$. Moreover, when $p = 2$, we write $\H^\al := \W^{\al,2}$, and when $p = \infty$,  $\mathcal{C}^{\al}:=C^{\al}\times C^{\al-1}:=W^{\al,\infty}\times W^{\al-1,\infty}$. With this definition and recalling the formula \eqref{linsol}, it is easy to check that 
\begin{align}
\| S(t)\u \|_{\H^{\al}} \les e^{-\frac{t}{2}}\| \u \|_{\H^{\al}}. \label{SonH}
\end{align}

In order to prove the result of Theorem \ref{THM:erg}, we need $\X$ to be a suitable space, such that the invariant measures for the flow of \eqref{SDNLW} concentrate on $\X$.
 Motivated by the choice of spaces introduced in \cite{t18erg}, we define, for $0<\al < 1$, the space \begin{align*}
\overline{X}^{\al}:=\{ \u \in \H^\al \, : \, S(t)\u \in C([0,+\infty); \W^{\al, \frac{2}{\al}}), \, \|S(t)\u\|_{\Wal}\les e^{-\frac{t}{8}} \},
\end{align*}
with the norm
  \begin{align}
  \| \u \|_{\X}:=\sup_{t\ge0} \, e^{\frac{t}{8}}\|S(t)\u\|_{\W^{\al,\frac{2}{\al}}}.
  \label{Xalpha}
\end{align}
We note here that later on we will further restrict the upper bound on $\al$. 
The space $\overline{X}^{\al}$ is a Banach space (see \cite[Lemma 1.2]{t18erg}), however, it is unclear if $\overline{X}^{\al}$ is separable or not. In order to avoid this problem, we simply define $X^{\al}$ to be the closure of trigonometric polynomials in $\overline{X}^{\al}$ under the norm $\|\cdot \|_{\X}$.
Note that for any $t\geq 0$, 
\begin{align}
\| S(t)\|_{X^{\al}\mapsto \X}\leq e^{-\frac{t}{8}}. \label{S(t)onX}
\end{align}
To be consistent with the Sobolev embedding $H^1 \embeds L^6$, which will be useful for analysing \eqref{veqn}, we will further restrict to $0<\al<\frac{1}{3}$. Note, however, that the following result is true for any $0<\al<1$.

\begin{lemma}\label{LEM:Hcompact}
For any $0<\al<\frac{1}{3}$, $\H^{1}\embeds \X$ and the embedding $\H^{1+\al}\embeds \X$ is compact.
\end{lemma}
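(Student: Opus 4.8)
The plan is to reduce both assertions to the single estimate
\begin{equation*}
\| \u \|_{\X} \les \| \u \|_{\H^1} \qquad \text{for all } \u \in \H^1,
\end{equation*}
after which the compactness claim follows immediately by composing with the Rellich compactness of $\H^{1+\al}\embeds\H^1$ on the torus. Note that the restriction $\al<\tfrac13$ will play no role here; everything below works for $0<\al<1$.

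\emph{Step 1: the critical Sobolev embedding $\H^1 \embeds \Wal$.} On $\T^2$ one has $H^{1-\al}(\T^2)\embeds L^{\frac2\al}(\T^2)$, since $\tfrac{1}{2/\al}=\tfrac12-\tfrac{1-\al}{2}$ for $0<\al<1$, and this is a genuine (non-endpoint, as $\tfrac2\al<\infty$) Sobolev embedding. Writing $g=\jb{\nb}^{\al}f$ gives $\|f\|_{W^{\al,2/\al}}=\|g\|_{L^{2/\al}}\les\|\jb{\nb}^{1-\al}g\|_{L^2}=\|f\|_{H^1}$, and similarly $\|f\|_{W^{\al-1,2/\al}}\les\|\jb{\nb}^{1-\al}\jb{\nb}^{\al-1}f\|_{L^2}=\|f\|_{L^2}$. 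Hence $\H^1=H^1\times L^2 \embeds W^{\al,\frac2\al}\times W^{\al-1,\frac2\al}=\Wal$.

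\emph{Step 2: decay of $S(t)$ on $\H^1$, hence $\H^1\embeds \overline{X}^\al$.} Applying \eqref{SonH} with exponent $1$ in place of $\al$ (valid since the multipliers in \eqref{linsol} are order $\le 1$ in exactly the way that preserves $\H^\bullet$) and then Step 1, we get, for $\u\in\H^1$,
\begin{equation*}
\|S(t)\u\|_{\Wal}\les\|S(t)\u\|_{\H^1}\les e^{-\frac t2}\|\u\|_{\H^1}\le e^{-\frac t8}\|\u\|_{\H^1},
\end{equation*}
so that $\sup_{t\ge0}e^{t/8}\|S(t)\u\|_{\Wal}\les\|\u\|_{\H^1}$. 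Continuity of $t\mapsto S(t)\u$ as a map into $\Wal$ follows from strong continuity of the $C_0$-semigroup $S(t)$ on $\H^1$ composed with the bounded embedding of Step 1. Thus $\u\in\overline{X}^\al$ and $\|\u\|_{\X}\les\|\u\|_{\H^1}$.

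\emph{Step 3: landing in $\X$, and conclusion.} Trigonometric polynomials are dense in $\H^1$; since $\|\cdot\|_{\X}\les\|\cdot\|_{\H^1}$ by Step 2, every $\u\in\H^1$ is a $\|\cdot\|_{\X}$-limit of trigonometric polynomials and therefore lies in $\X$, the $\|\cdot\|_{\X}$-closure of trigonometric polynomials in $\overline{X}^\al$. This proves the continuous embedding $\H^1\embeds\X$. Finally, the embedding $\H^{1+\al}\embeds\H^1$ is compact by Rellich--Kondrachov on $\T^2$ (componentwise, $H^{1+\al}\embeds H^1$ and $H^{\al}\embeds L^2$ are compact), so composing with the bounded map $\H^1\embeds\X$ shows that $\H^{1+\al}\embeds\X$ is compact. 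The only point demanding slight care is the bookkeeping between the decay rate $e^{-t/2}$ in \eqref{SonH} and the weight $e^{t/8}$ defining $\|\cdot\|_{\X}$, together with the passage from $\overline{X}^\al$ to its separable subspace $\X$ handled by the density argument in Step 3; neither is a serious obstacle.
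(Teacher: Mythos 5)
Your proof is correct and follows essentially the same route as the paper: the Sobolev embedding $\H^1\embeds\Wal$, the decay estimate \eqref{SonH}, and composition with the Rellich compactness of $\H^{1+\al}\embeds\H^1$. Your Step 3 (approximating by trigonometric polynomials to land in $\X$ rather than merely in $\overline{X}^{\al}$) makes explicit a point the paper's proof leaves implicit, but it is the same argument.
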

\begin{proof}
For $\u \in \H^1$ and any $t\geq 0$, Sobolev embedding and \eqref{SonH} imply  
\begin{align*}
\| S(t)\u\|_{\Wal} \les \|S(t)\u\|_{\H^{1}} \les e^{-\frac{t}{2}}\|\u \|_{\H^1}. 
\end{align*} 
Then, for any $t_0 \geq 0$, since $S(t)\u \in C([0,\infty);\H^{1})$, we have 
\begin{align*}
\lim_{t\to t_0} \| S(t)\u-S(t_0)\u\|_{\Wal} \les \lim_{t\to t_0}\| S(t)\u-S(t_0)\u\|_{\H^1}=0.
\end{align*}
Moreover, since the embedding $\H^{1+\al} \embeds \H^1$ is compact, we have that  $\H^{1+\al} \embeds \H^1 \embeds \X$ is compact as well, by composition.
\end{proof}

\begin{remark} \rm \label{Xnecessity}
One might wonder if the definition of the space $\X$ is truly necessary, and if it would suffice to work with a standard Sobolev space instead. However, as it will be clear from the arguments in Section 3, any candidate space $X$ must satisfy the inequalities 
$$ \| S(t) \u\|_{L^6} \les \| \u \|_{X} \quad \text{and} \quad \| S(t) \u\|_{X} \les \| \u \|_{X}. $$
By the properties of the linear propagator for the wave equation, 
if we were to use standard Sobolev spaces,
 the second inequality would force $X = \H^{\alpha'}$ for some $\alpha' \in \R$, 
 and the first inequality then forces $\alpha' \ge \frac 23$. 
 However, as already mentioned in the introduction, 
 we expect the solution to \eqref{SDNLW} to be a relatively rough function, namely, 
 $\u \not\in \H^{\alpha}$ for every $\alpha \ge s$.
 Therefore, our space $X$ must be big enough to contain truly rough functions. 
 By the same argument as in \cite[Lemma 1.4]{t18erg}, we have for every $\al_1>\al>0$, 
 that there exists $\u_0\in \X$ such that $\u_0\notin \H^{\al_1}$, 
 so at least from this point of view, the spaces $\X$ appear as a natural candidate for our well-posedness theory.
\end{remark}

Similar to the spaces $\cj{X}^{\al}$ and $\X$, for $0<\al<1$, we define 
\begin{align*}
\overline{Z}^{\al}:=\{ \u \in \H^{\al} \, :\, S(t)\u \in C([0,+\infty); \CC^{\al}), \,\, \| S(t)\u\|_{\CC^{\al}}\les e^{-\frac{t}{8}}\, \},
\end{align*}
and define $Z^{\al}$ to be the closure of trigonometric polynomials in $\cj{Z}^{\al}$ under the norm 
\begin{align*}
\|\u\|_{Z^{\al}}=\sup_{t\geq 0} e^{\frac{t}{8}}\|S(t)\u\|_{\CC^{\al}}.
\end{align*}
Now, $Z^{\al}$ is separable and is a Banach space (see \cite[Lemma 1.2]{t18erg}) and clearly $Z^{\al}\embeds X^{\al}$. Notice that, by definition of the operator $S(t)$ in \eqref{linsol} and of the Sobolev spaces $\W^{\al,p}$, we have that 
\begin{equation*}
\Big\| \partial_t S(t) \u \Big\|_{\W^{\be,q}} \les \|S(t) \u\|_{\W^{\be+1,q}}.
\end{equation*}
Therefore, for $0\leq t_1 \le t_2$, 
\begin{equation*}
\Big\| \jb{\nabla}^{-1} \frac{S(t_1) - S(t_2)}{|t_1 - t_2|} \u \Big\|_{\W^{\be,q}} \les \sup_{t_1 \le t \le t_2} \|S(t) \u\|_{\W^{\be,q}},
\end{equation*}
and by interpolation, for every $0 \le \ta \le 1$ and $\ep > 0$, we have that  
\begin{gather}
\Big\| \frac{S(t_1) - S(t_2)}{|t_1 - t_2|^\ta} \u \Big\|_{\W^{\be,q}} \les  \sup_{t_1 \le t \le t_2}  \|S(t) \u\|_{\W^{\be+\ta,q}}, \label{derivTrade}\\
\Big\| \frac{S(t_1) - S(t_2)}{|t_1 - t_2|^\ta} \u \Big\|_{\CC^{\be}} \les \sup_{t_1 \le t \le t_2}  \|S(t) \u\|_{\CC^{\be+\ta+\ep}} \label{derivTrade1}.
\end{gather}
We have the following compactness property. 
\begin{lemma}\label{LEM:Zcompact}
Let $0<\al<\s <1$. Then, the embedding $Z^{\s}\embeds \X$ is compact.
\end{lemma}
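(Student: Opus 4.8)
The plan is to regard elements of $\X$ and $Z^{\s}$ as bounded continuous curves of spatial functions: the map $\u\mapsto\big(t\mapsto e^{\frac{t}{8}}S(t)\u\big)$ is an isometry from $\X$ into $C_{b}\big([0,\infty);\Wal\big)$ and from $Z^{\s}$ into $C_{b}\big([0,\infty);\CC^{\s}\big)$. Given a bounded sequence $(\u_{n})$ in $Z^{\s}$, I would extract a subsequence along which the curves $\Psi_{n}(t):=e^{\frac{t}{8}}S(t)\u_{n}$ converge in $C\big([0,T];\Wal\big)$ for every finite $T$, by a vector-valued Arzel\`a--Ascoli argument, and then upgrade this to convergence on all of $[0,\infty)$ using a quantitative estimate on the tail $\sup_{t\ge T}e^{\frac{t}{8}}\|S(t)\u_{n}\|_{\Wal}$.

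Two facts about the spatial spaces are needed. First, since $\al<\s$, the embedding $\CC^{\s}\embeds\Wal$ on $\T^{2}$ is compact; this is a Rellich-type statement, which one may for instance obtain by factoring $\CC^{\s}\embeds\CC^{\s'}\embeds\Wal$ through some $\s'\in(\al,\s)$, the first embedding being compact and the second continuous. Second, fixing $\s'\in(\al,\s)$, interpolation yields
\[
\|f\|_{\Wal(\T^{2})}\les\|f\|_{\H^{\al}}^{\al}\,\|f\|_{\CC^{\s'}}^{1-\al},
\]
since $[\H^{\al},\CC^{\s'}]_{1-\al}$ has integrability exponent exactly $\tfrac{2}{\al}$ and smoothness $\al^{2}+(1-\al)\s'>\al$, so that it embeds into $\Wal$. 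Combining the latter with \eqref{SonH}, with the continuous embeddings $\CC^{\s}\embeds\H^{\al}$ and $\CC^{\s}\embeds\CC^{\s'}$, and with the bound $\|S(t)\u\|_{\CC^{\s}}\le e^{-\frac{t}{8}}\|\u\|_{Z^{\s}}$ built into the $Z^{\s}$-norm, I obtain for every $t\ge0$
\[
e^{\frac{t}{8}}\|S(t)\u\|_{\Wal}\les e^{\frac{t}{8}}\big(e^{-\frac{t}{2}}\|\u\|_{Z^{\s}}\big)^{\al}\big(e^{-\frac{t}{8}}\|\u\|_{Z^{\s}}\big)^{1-\al}=e^{-\frac{3\al}{8}t}\|\u\|_{Z^{\s}}.
\]
In particular $\sup_{t\ge T}e^{\frac{t}{8}}\|S(t)\u\|_{\Wal}\les e^{-\frac{3\al}{8}T}\|\u\|_{Z^{\s}}$, so the tail is uniformly small over bounded subsets of $Z^{\s}$.

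For the behaviour on a compact interval $[0,T]$, let $\|\u_{n}\|_{Z^{\s}}\le K$. For each fixed $t$, the set $\{\Psi_{n}(t)\}_{n}$ is bounded in $\CC^{\s}$, hence precompact in $\Wal$ by the compact embedding above. For equicontinuity, I would apply \eqref{derivTrade} with $\be=\al$, $q=\tfrac{2}{\al}$ and some $\ta\in(0,\s-\al)$, together with $\CC^{\s}\embeds\W^{\al+\ta,\frac{2}{\al}}$, to get $\|S(t_{1})\u_{n}-S(t_{2})\u_{n}\|_{\Wal}\les K|t_{1}-t_{2}|^{\ta}$ uniformly in $n$; since $e^{\frac{t}{8}}$ is Lipschitz on $[0,T]$ and $\|\Psi_{n}(t)\|_{\Wal}\les K$, the family $\{\Psi_{n}\}_{n}$ is equi-H\"older on $[0,T]$. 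Vector-valued Arzel\`a--Ascoli then shows $\{\Psi_{n}|_{[0,T]}\}_{n}$ is precompact in $C([0,T];\Wal)$ for each $T$, and a diagonal extraction over $T\in\N$ gives a subsequence $(\u_{n_{k}})$ with $\Psi_{n_{k}}$ convergent in $C([0,T];\Wal)$ for every $T$. Given $\ep>0$, the tail estimate makes $\sup_{t\ge T}e^{\frac{t}{8}}\|S(t)(\u_{n_{k}}-\u_{n_{l}})\|_{\Wal}<\ep$ for $T=T(\ep)$ large and all $k,l$, while $\sup_{0\le t\le T}e^{\frac{t}{8}}\|S(t)(\u_{n_{k}}-\u_{n_{l}})\|_{\Wal}=\|\Psi_{n_{k}}-\Psi_{n_{l}}\|_{C([0,T];\Wal)}<\ep$ for $k,l$ large. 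Hence $(\u_{n_{k}})$ is Cauchy in $\X$, and converges there since $\X$ is a closed subspace of the Banach space $\overline{X}^{\al}$. This proves that $Z^{\s}\embeds\X$ is compact.

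The only genuinely delicate point I foresee is the passage from compact time intervals to $[0,\infty)$: Arzel\`a--Ascoli only controls $t\in[0,T]$, and the tail has to be handled separately via the decay $e^{-\frac{3\al}{8}t}$, which itself rests on interpolating the $e^{-t/2}$-decay in $\H^{\al}$ provided by \eqref{SonH} against the $e^{-t/8}$-decay in $\CC^{\s}$ encoded in the $Z^{\s}$-norm, the high integrability of $\Wal$ being absorbed by the $L^{\infty}$-control contained in $\CC^{\s}$. The remaining ingredients --- Rellich-type compactness on $\T^{2}$, the time-regularity bound \eqref{derivTrade}, and the diagonal extraction --- are routine.
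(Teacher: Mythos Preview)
Your proof is correct and follows essentially the same approach as the paper's: both rely on the compact embedding $\CC^{\s}\embeds\Wal$ together with the time-regularity estimate \eqref{derivTrade} to obtain uniform convergence on compact time intervals, and the same interpolation between the $e^{-t/2}$-decay in $\H^{\al}$ and the $e^{-t/8}$-decay in $\CC^{\s}$ to produce the $e^{-3\al t/8}$ tail bound. The only cosmetic difference is that the paper first identifies the limit via weak compactness in $\H^{\s}$ and then verifies convergence to it, whereas you bypass this by running Arzel\`a--Ascoli and a Cauchy argument directly.
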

\begin{proof}
Let $\{ \u_n\}_{n\in \N}$ be a sequence in $Z^{\s}$ such that $\sup_{n\in \N}\|\u_n\|_{Z^{\s}}\leq C.$
Then, we have 
\begin{align}
\sup_{n\in \N} \|S(t)\u_n\|_{\CC^{\s}}\leq Ce^{-\frac{t}{8}} \label{ZX1}
\end{align}
for every $t\geq 0$. By compactness of the embedding $\CC^{\s}\embeds \Wal$, for every $t\geq 0$, there exists $\v(t) \in \Wal$ such that, up to subsequences, $S(t)\u_n$ converges to $\v(t)$ in $\Wal$. Now, by \eqref{SonH} and \eqref{ZX1}, we have 
\begin{align}
\|\u_n\|_{\H^{\s}}& \les e^{\frac{1}{2}}\|S(1)\u_n\|_{\H^{\s}}\les e^{\frac{1}{2}}C, \label{ZX2}
\end{align}
and hence, up to further subsequences, there exists $\u\in \H^{\s}$ such that $\u_n \wto \u$ in $\H^{\s}$ and thus, also $S(t)\u_n \wto S(t)\u$ in $\H^{s}$, for every $s\leq \s$ and $t\geq 0$. Note that \eqref{ZX2} implies
\begin{align}
\|\u\|_{\H^{\s}}\les e^{\frac{1}{2}}C. \label{ZX3}
\end{align}
Meanwhile, from \eqref{ZX1}, there exists $\w(t)\in \CC^{\s}$ such that, up to further subsequences, $S(t)\u_n$ converges in the weak-$\ast$ topology to $\w(t)$ in $\CC^{\s}$ and thus also converges weakly in $\H^{s}$. Identifying limits implies $\v(t)=\w(t)=S(t)\u$
and thus $S(t)\u\in \CC^{\s}$ and $S(t)\u_n$ converges to $S(t)\u$ in $\Wal$. By weak-$\ast$ lower semicontinuity of $\CC^{\s}$, we also have 
\begin{align}
\|S(t)\u\|_{\CC^{\s}}\leq Ce^{-\frac{t}{8}} \label{ZX4}
\end{align}
for any $t\geq 0$. It remains to show $\u\in \X$ and $\u_n \to \u$ in $\X$. By interpolation, \eqref{derivTrade1}, \eqref{ZX1}, \eqref{ZX2}, \eqref{ZX3} and \eqref{ZX4}, for any $t_2\geq t_1>0$,
\begin{align*}
\|S(t_2)\u-S(t_1)\u\|_{\Wal}& \les \|S(t_2)\u-S(t_1)\u\|_{\CC^{\al}}^{1-\al}\|S(t_2)\u-S(t_1)\u\|_{\H^{\al}}^{\al} \\
& \les C^{1-\al}|t_2-t_1|^{\ep \al} \|\u\|_{\H^{\al+\ep}}^{\al} \\
&\les C|t_2-t_1|^{\ep \al},
\end{align*}
where $0<\ep <\s -\al$. This shows that $S(t)\u \in C([0,+\infty); \Wal)$. Applying this same reasoning to $\{ S(t)\u_n\}_{n\in \N}$, we get 
\begin{align*}
\sup_{n\in \N}\|S(t_2)\u_n -S(t_1)\u_n\|_{\Wal}\les C|t_2-t_1|^{\ep \al},
\end{align*}
which implies that $S(t)\u_n$ converges to $S(t)\u$ in $\Wal$ uniformly on compact sets. Now for any fixed $T>0$, \eqref{ZX1}, \eqref{ZX2}, \eqref{ZX3} and \eqref{ZX4} imply
\begin{align*}
e^{\frac{t}{8}}\|S(t)\u_n & -S(t)\u\|_{\Wal} \\
& \les e^{\frac{T}{8}}\sup_{\tau \in [0,T]}\|S(t)\u_n -S(t)\u\|_{\Wal} +\sup_{t\in (T,+\infty)}e^{\frac{t}{8}}\|S(t)\u_n -S(t)\u\|_{\Wal} \\
& \les e^{\frac{T}{8}}\sup_{\tau \in [0,T]}\|S(t)\u_n -S(t)\u\|_{\Wal}+Ce^{-\frac{3\al T}{8}}.
\end{align*}
Thus, taking $T\gg 1$ and $n\gg 1$ sufficiently large, depending on $T$, the above shows that $\u_n \to \u$ in $\X$, which completes the proof.
\end{proof}

\begin{lemma}\label{compactOrbits}
For any $\u_0 \in \X$, let 
\begin{equation}\label{orbit}
O_{\u_0} := \{ S(t)\u_0 : t \ge 0 \}\cup\{\0\}\subset \X.
\end{equation}
Then $O_{\u_0}$ is compact in $\X$.
\end{lemma}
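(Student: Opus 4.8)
The plan is to exhibit $O_{\u_0}$ as the continuous image of a compact set. Since $\X$ is a Banach (in particular metric) space, I would introduce the one-point compactification $[0,+\infty]$ of $[0,+\infty)$ and the map $\psi\colon[0,+\infty]\to\X$ defined by $\psi(t):=S(t)\u_0$ for $t<+\infty$ and $\psi(+\infty):=\0$, so that $\psi([0,+\infty])=O_{\u_0}$; it then suffices to check that $\psi$ is continuous. Continuity at $+\infty$ is immediate from \eqref{S(t)onX}, which gives $\|S(t)\u_0-\0\|_{\X}\le e^{-t/8}\|\u_0\|_{\X}\to0$ as $t\to+\infty$. The remaining task is to show that the orbit map $t\mapsto S(t)\u_0$ is continuous from $[0,+\infty)$ into $\X$.

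For this I would first reduce to small-time continuity at the origin. Using that $S(\cdot)$ is a semigroup of Fourier multipliers together with the contraction bound \eqref{S(t)onX}, for $0\le t_1\le t_2$ one has $S(t_2)\u_0-S(t_1)\u_0=S(t_1)\big(S(t_2-t_1)\u_0-\u_0\big)$, whence $\|S(t_2)\u_0-S(t_1)\u_0\|_{\X}\le\|S(t_2-t_1)\u_0-\u_0\|_{\X}$; so it is enough to prove $\|S(h)\u_0-\u_0\|_{\X}\to0$ as $h\to0^+$. Here I would use the definition of $X^\al$ as the closure of trigonometric polynomials in the $\|\cdot\|_{\X}$-norm: since also $\|S(h)\v\|_{\X}\le\|\v\|_{\X}$ for every $\v\in\X$ by \eqref{S(t)onX}, a standard three-$\ep$ argument reduces the claim to the case where $\u_0$ is replaced by a trigonometric polynomial $\mathbf{p}$. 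For such $\mathbf{p}$, I would invoke $\H^1\embeds\X$ from Lemma~\ref{LEM:Hcompact} to bound $\|S(h)\mathbf{p}-\mathbf{p}\|_{\X}\les\|S(h)\mathbf{p}-\mathbf{p}\|_{\H^1}$, and then note that $h\mapsto S(h)\mathbf{p}$ is a continuous (in fact smooth) $\H^1$-valued curve, because $\mathbf{p}$ has only finitely many nonzero Fourier modes and, by \eqref{linsol}, each entry of the propagator matrix acting on a fixed mode is a smooth function of $t$.

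The only step that is not purely formal is this last reduction: because the $\X$-norm is a supremum over all times $t\ge0$, strong continuity of the semigroup $S(\cdot)$ on $\X$ at $t=0$ is not automatic, and I expect it to be essentially the only obstacle. Its resolution rests on the density of trigonometric polynomials in $X^\al$ together with the uniform-in-$t$ contraction \eqref{S(t)onX}; granting this, $\psi$ is continuous on the compact space $[0,+\infty]$, so $O_{\u_0}=\psi([0,+\infty])$ is compact in $\X$, as claimed.
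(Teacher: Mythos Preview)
Your proposal is correct and follows essentially the same approach as the paper: both exhibit $O_{\u_0}$ as the continuous image of a compact interval (you use the one-point compactification $[0,+\infty]$, the paper uses $[0,1]$ via a tangent reparametrisation), and both establish continuity of $t\mapsto S(t)\u_0$ in $\X$ by approximating $\u_0$ with trigonometric polynomials, using the contraction \eqref{S(t)onX} together with $\H^1\embeds\X$. Your additional reduction to continuity at $t=0$ via the semigroup identity is a minor cosmetic streamlining but not a substantive difference.
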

\begin{proof}
We first show that the map $t \mapsto S(t) \u_0$ is continuous in $t$. If $\u_0$ is a trigonometric polynomial, this follows easily from the embedding $\H^1 \embeds \X$. If $\u_0$ is not a trigonometric polynomial, we take a sequence of trigonometric polynomials $\u_n$ which is converging to $\u_0$ in $\X$. Notice that trigonometric polynomials are dense in $\X$ by definition. For $t_1, t_2 \ge 0$, using \eqref{S(t)onX}, we have
\begin{align*}
&\phantom{\le\ }\| S(t_1)\u_0 - S(t_2) \u_0 \|_{\X} \\
&\le \| S(t_1)\u_n - S(t_2) \u_n \|_{\X} + \|S(t_1)(\u_0 - \u_n) \|_{\X} + \|S(t_2)(\u_0 - \u_n) \|_{\X}\\
&\le  \| S(t_1)\u_n - S(t_2) \u_n \|_{\X} + 2 \|\u_0 - \u_n\|_{\X}.
\end{align*}
Therefore,
\begin{align*}
\limsup_{t_2 \to t_1} \| S(t_1)\u_0 - S(t_2) \u_0 \|_{\X} \le 2 \|\u_0 - \u_n\|_{\X} \to 0
\end{align*}
as $n \to \infty$.

We now move to proving compactness of $O_{\u_0}$. We notice that $O_{\u_0}$ is the image of the interval $[0,1]$ under the map 
\begin{equation*}
g(\tau) =
\begin{cases}
S\big(\tan(\frac{\pi \tau}{2})\big) \u_0 & \text{ if } 0\le \tau < 1,\\
\0 & \text{ if } \tau=1.
\end{cases}
\end{equation*}
Therefore, it is enough to show that $g$ is continuous. Continuity for $0 \le \tau < 1$ follows from the continuity of $ t \mapsto S(t) \u_0$. Continuity in $\tau = 1$ follows from the estimate
\begin{equation*}
\| S(t) \u_0\|_{\X} \le e^{-\frac t8} \| \u_0\|_{\X} \to 0
\end{equation*}
as $t \to \infty$.
\end{proof}

\subsection{The stochastic convolution}
In this section, we establish regularity properties of the stochastic convolution $\stick_{t}(\xi)$. To this end, we will use the following result, a proof of which can be found in \cite[Proposition A.3.1 and A.3.2]{t19thesis}.

\begin{proposition}\label{PROP:REG}
Let $M=\T^{k}\times \R^{d-k}$ and $X$ be a distribution-valued random variable on $\R_{t}\times M_{x}$.
Let $I\subset \R$ be a closed interval and let $B\subset M$ be a ball of radius $1$. Suppose the following statement holds:
\begin{enumerate}[\normalfont (i)]
\item there exist constants $A_{B}, C_{p}>0$, independent of $t\in I$, and $s\in \R$ such that for every test function $\phi:M\mapsto \R$ supported in $B$, we have 
\begin{align}
\E[| \jb{X(t),\phi}|^{2}]& \leq A_{B}^2 \|\phi\|_{H^{-s}}^{2}, \label{i1} \\
\E[|\jb{X(t),\phi}|^{p}]& \leq C_{p}^{p}\E[|\jb{X(t),\phi}|^{2}]^{\frac{p}{2}}, \label{i2}
\end{align}
for every $p\geq 2$.
\end{enumerate}
Then, for every compact $K\subseteq M$, for every $\ep>0$ and for each fixed $t\in I$, $X(t)\in C^{s-\frac{d}{2}-\ep}(K)$ almost surely. Suppose, in addition, the following holds:
\begin{enumerate}[\normalfont (ii)]
\item there exist $0<\ta<1$ and constants $\wt{A}_{B}, \wt{C}_{p}>0$, independent of $t\in I$, and $s\in \R$ such that for every test function $\phi:M\mapsto \R$ supported in $B$ and for every $t_1, t_2\in I$ such that $ t_1\leq t_2$, we have 
\begin{align}
\E[| \jb{X(0),\phi}|^{2}]& \leq \wt{A}_{B}^2 \|\phi\|_{H^{-s}}^{2}, \label{ii1}\\
\E[|\jb{X(0),\phi}|^{p}]& \leq \wt{C}_{p}^{p}\E[|\jb{X(0),\phi}|^{2}]^{\frac{p}{2}}, \label{ii2}\\
\E[|\jb{X(t_2)-X(t_1),\phi}|^{2}]&\leq \wt{A}_{B}^{2}|t_2-t_1|^{2\ta}\| \phi \|_{H^{-s}}^{2}, \label{ii3}\\
\E[|\jb{X(t_2)-X(t_1),\phi}|^{p}]&\leq \wt{C}_{p}^{p}\E[ |\jb{X(t_2)-X(t_1),\phi}|^{2}]^{\frac{p}{2}}, \label{ii4}
\end{align}
for every $p\geq 2$.
\end{enumerate}
Then, for every compact $K\subseteq M$ and $\ep>0$, $X\in C(I; C^{s-\frac{d}{2}-\ep}(K))$ almost surely.
Moreover, if $X$ is supported in a single ball, then we have 
\begin{align}
\E\bigg[ \|X\|_{C(I;C^{s-\frac{d}{2}-\ep}(K))}^{p}\bigg]^{\frac{1}{p}}\leq C\tilde{A}_{B} \label{momentbd}
\end{align}
for every $p> \frac{2d}{\ep}$, where $C$ depends only on $p$ and $|I|$, the length of the interval $I$.
\end{proposition}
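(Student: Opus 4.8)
The plan is to run a Kolmogorov-type regularity argument phrased through a compactly supported wavelet basis, exploiting that the hypotheses \eqref{i1}--\eqref{i2} and \eqref{ii1}--\eqref{ii4} control precisely the moments of the wavelet coefficients of $X(t)$ and of its time increments. Fix a Daubechies-type orthonormal wavelet basis $\{\phi_{0,k}\}_k\cup\{\psi_{j,k}\}_{j\ge0,k}$ of $L^2(\R^d)$ (suppressing the finitely many wavelet types in dimension $d$), smooth enough and with enough vanishing moments (depending on $s$ and $d$), normalised so that $\psi_{j,k}(x)=2^{jd/2}\psi(2^jx-k)$; for $j\ge0$ each $\psi_{j,k}$ and each $\phi_{0,k}$ is supported in a ball of radius $\les 1$, so (up to a translation) hypotheses (i) and (ii) apply to it, and a direct Fourier-side computation using that $\widehat\psi$ vanishes near the origin gives, uniformly in $k$, that $\|\psi_{j,k}\|_{H^{-s}}\les 2^{-js}$ and $\|\phi_{0,k}\|_{H^{-s}}\les 1$. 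I will use the wavelet characterisation of H\"older--Besov regularity: for $\alpha$ below the smoothness of $\psi$ and not a nonnegative integer, a distribution $f$ lies in $C^\alpha$ near a compact $K$ precisely when
\begin{equation*}
\sup_{\substack{j\ge0,\,k\\\supp\psi_{j,k}\cap K\ne\emptyset}}2^{j(\alpha+\frac d2)}\,|\langle f,\psi_{j,k}\rangle|\;+\;\sup_{\substack{k\\\supp\phi_{0,k}\cap K\ne\emptyset}}|\langle f,\phi_{0,k}\rangle|<\infty,
\end{equation*}
and this quantity is comparable to $\|f\|_{C^\alpha(K)}$. To descend from $\R^d$ to $M=\T^k\times\R^{d-k}$ and to a general $K$ one covers $K$ by finitely many unit balls and uses a subordinate partition of unity; this is the only source of $K$-dependence in the constants, and it disappears when $X$ is supported in a single unit ball.

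For the statement at fixed $t$, fix $t\in I$, $\ep>0$, and set $\alpha=s-\frac d2-\ep$, so $\alpha+\frac d2=s-\ep$. Combining \eqref{i1}, \eqref{i2} and the bound on $\|\psi_{j,k}\|_{H^{-s}}$ gives, for every $p\ge2$, that $\E[|\langle X(t),\psi_{j,k}\rangle|^p]\le C_p^pA_B^p\|\psi_{j,k}\|_{H^{-s}}^p\les (C_pA_B)^p2^{-jsp}$. Since at scale $j$ only $\les_K 2^{jd}$ indices $k$ are relevant, dominating the supremum by an $\ell^p$-sum gives
\begin{align*}
\E\Big[\sup_{j,k}\big(2^{j(\alpha+\frac d2)}|\langle X(t),\psi_{j,k}\rangle|\big)^p\Big]&\le\sum_{j\ge0}\sum_k2^{j(s-\ep)p}\,\E\big[|\langle X(t),\psi_{j,k}\rangle|^p\big]\\
&\les_K(C_pA_B)^p\sum_{j\ge0}2^{j(d-\ep p)},
\end{align*}
which is finite as soon as $p>d/\ep$. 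For such a $p$ the supremum is a.s.\ finite; treating the finitely many scaling-function coefficients directly from \eqref{i1}--\eqref{i2}, the wavelet characterisation yields $X(t)\in C^{s-\frac d2-\ep}(K)$ almost surely.

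For the time-continuous statement one repeats this with increments. By \eqref{ii3}--\eqref{ii4}, each coordinate process $Y_{j,k}(t):=\langle X(t),\psi_{j,k}\rangle$ satisfies $\E[|Y_{j,k}(t_2)-Y_{j,k}(t_1)|^p]\les(\wt C_p\wt A_B)^p2^{-jsp}|t_2-t_1|^{\ta p}$ and, by \eqref{ii1}--\eqref{ii2}, $\E[|Y_{j,k}(0)|^p]\les(\wt C_p\wt A_B)^p2^{-jsp}$. Hence, by the Kolmogorov continuity theorem (equivalently the Garsia--Rodemich--Rumsey inequality) on $I$, for $p>1/\ta$ the process $Y_{j,k}$ has a $(\ta-\frac1p)$-H\"older modification with $\E[\sup_{t\in I}|Y_{j,k}(t)|^p]\les_{p,|I|}(\wt C_p\wt A_B)^p2^{-jsp}$. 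Interchanging $\sup_{t\in I}$ with the $\ell^p$-sum in $(j,k)$ exactly as above,
\begin{align*}
\E\Big[\sup_{t\in I}\sup_{j,k}\big(2^{j(\alpha+\frac d2)}|Y_{j,k}(t)|\big)^p\Big]&\les_{K,p,|I|}(\wt C_p\wt A_B)^p\sum_{j\ge0}2^{j(d-\ep p)}\\
&\les_{K,p,|I|}\wt A_B^p,
\end{align*}
which is finite with a controlled constant once $p>2d/\ep$ (the surplus keeps both the geometric series and the H\"older exponent $\ta-\frac1p$ comfortably positive). Together with the H\"older-in-$t$ modifications and a density argument in $t$, this yields $X\in C(I;C^{s-\frac d2-\ep}(K))$ almost surely; when $X$ is supported in a single unit ball the counting constant above is absolute, so the $K$-dependence disappears and one obtains \eqref{momentbd} with $C=C(p,|I|)$.

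I expect the main obstacle to be not any single estimate but the bookkeeping that produces the exact exponent $s-\frac d2-\ep$: the uniform-in-$k$ bound $\|\psi_{j,k}\|_{H^{-s}}\asymp 2^{-js}$ together with the $\sim 2^{jd}$ count of wavelets per scale meeting a fixed compact set are precisely what turn the $L^2$-type input $\|\cdot\|_{H^{-s}}$ into an $L^\infty$-type, $d/2$-loss output, and they must be matched carefully. The second delicate point is the passage from ``each $Y_{j,k}$ admits a H\"older-in-$t$ modification'' to ``$X$ itself is jointly continuous with values in $C^{s-\frac d2-\ep}$'': the uniform-in-$(j,k)$ moment bounds above are tailored to this, but one must still select a single modification valid simultaneously for all $(j,k)$ and all $t$, which is where separability of the spaces involved enters.
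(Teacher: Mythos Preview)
The paper does not actually prove this proposition; it simply states it and refers to \cite[Proposition A.3.1 and A.3.2]{t19thesis} for the proof, so there is no in-paper argument to compare against. Your wavelet-based Kolmogorov argument is a correct and standard route to results of this type: the scaling $\|\psi_{j,k}\|_{H^{-s}}\sim 2^{-js}$, the $\sim 2^{jd}$ count of wavelets per scale meeting a fixed compact set, and the scale-wise Kolmogorov/GRR bound on the coefficient processes $Y_{j,k}$ are exactly the ingredients that produce the exponent $s-\tfrac d2-\ep$ and the threshold $p>2d/\ep$, and none of the steps you outline is problematic. The two caveats you flag at the end (careful bookkeeping of exponents, and selecting a single modification valid for all countably many $(j,k)$ simultaneously) are genuine but routine.
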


The space-time white noise $\xi$ is the random distribution such that $\{ \jb{\xi,\phi} \}_{\phi\in L^2(\R\times \T^2)}$ is a family of centred Gaussian random variables with variance $\|\phi\|_{L^2(\R\times \T^2)}^2$ on a probability space $(\O, \F, \prob)$ and with the property that 
\begin{align}
\E\big[ \jb{\xi, \phi} \jb{\xi, \psi}  \big]=\jb{\phi, \psi}_{L^2(\R\times \T^2)}  \label{whiteprop}
\end{align}
for any $\phi, \psi\in L^2(\R\times \T^2)$. 
For $t\geq 0$, we set $\tilde{\F}_{t}=\s\big( \big\{ \jb{\xi,\phi} \, : \, \phi\vert_{(t,+\infty)\times \T^2}\equiv 0, \, \phi\in L^2(\R\times \T^2)\big\}\big)$ and we denote by $\{\F_{t}\}_{t\geq 0}$ the usual augmentation of the filtration $\{ \tilde{\F}_{t}\}_{t\geq 0}$ (see~\cite[p. 45]{RevuzYor}).

We now verify the regularity properties of the stochastic convolution $\stick_{t}(\xi)$.
In order to contain $\stick_{t}(\xi)$ within the space $\X$, we will further restrict to $0<\al <s \wedge \frac{1}{3}$. 

\begin{proposition}\label{PROP:REGSTICK}
For every $\al<s$, 
\begin{align*}
\stick_{t}(\xi)\in C([0,+\infty); \CC^{\al}(\T^2)) 
\end{align*}
almost surely.
 In particular, $\stick_{t}(\xi)\in C([0,+\infty); \W^{\al,p}(\T^2))$ for any $1\leq p\leq \infty$, almost surely.
\end{proposition}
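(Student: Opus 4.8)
The plan is to apply Proposition~\ref{PROP:REG} directly to the space-time distribution $X = \stick(\xi)$ on $I \times \T^2$ for an arbitrary closed interval $I = [0,T]$, with $d = k = 2$. Since the torus is covered by finitely many unit balls, and the relevant norms are local, it suffices to verify hypotheses (i) and (ii) for test functions $\phi$ supported in a fixed ball $B$ of radius $1$, taking $s$ in the proposition equal to any real number strictly less than $s+1$ (so that $s_{\mathrm{prop}} - \tfrac d2 - \ep = s_{\mathrm{prop}} - 1 - \ep$ can be taken arbitrarily close to, but below, $s$); this will give $\stick \in C(I;C^{\al}(\T^2))$ for every $\al < s$, which is exactly the claim, and the $\W^{\al,p}$ statement then follows from $C^{\al} \embeds W^{\al,p}$ on the compact torus for all $1 \le p \le \infty$.

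First I would record the explicit formula for $\stick_t(\xi)$. From \eqref{linsol} and \eqref{stick}, only the second component of $S(t-t')\vec{0}{\sqrt2\jb{\nb}^{-s}\xi(t')}$ and the $(1,2)$-entry matter; writing things on the Fourier side, $\widehat{\stick_t}(n)$ is (a vector whose entries are) $\sqrt2\jb{n}^{-s}$ times $\int_0^t e^{-\frac{t-t'}{2}}\tfrac{\sin((t-t')\jb{n})}{\jb{n}}\,d\widehat{\beta_n}(t')$ and a similar expression with $\cos$ and the lower-order corrections, where $\{\beta_n\}$ are (essentially independent, up to the reality condition) complex Brownian motions arising from pairing $\xi$ against $e^{in\cdot x}\mathbf 1_{[0,t]}$. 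Since $\stick_t$ is a Gaussian process, the hypercontractivity bounds \eqref{i2}, \eqref{ii2}, \eqref{ii4} are automatic (Wiener chaos of order $1$, with $C_p \sim \sqrt p$), so the real content is the three $L^2$ estimates \eqref{i1}, \eqref{ii1}, \eqref{ii3}.

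For \eqref{i1}/\eqref{ii1}: using \eqref{whiteprop} and the Itô isometry, $\E[|\jb{\stick_t(\xi),\phi}|^2] = 2\sum_n \jb{n}^{-2s}|\widehat\phi(n)|^2 \int_0^t \big(e^{-\frac{t-t'}{2}}m(t-t',n)\big)^2\,dt'$ where $m$ collects the bounded trigonometric/lower-order factors; the time integral is uniformly bounded in $t$ and $n$ because the integrand is $\lesssim e^{-(t-t')}$, giving $\E[|\jb{\stick_t(\xi),\phi}|^2] \lesssim \sum_n \jb{n}^{-2s}|\widehat\phi(n)|^2 = \|\phi\|_{H^{-s}}^2$, uniformly in $t \in I$. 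For the time-increment bound \eqref{ii3}, I would split $\stick_{t_2} - \stick_{t_1}$ into the contribution of the noise increment on $(t_1,t_2]$ and the contribution of the noise on $(0,t_1]$ propagated by $S(t_2-t_1) - \mathrm{Id}$ (or equivalently just expand the Fourier multiplier difference). The first piece contributes $\lesssim |t_2-t_1|\sum_n \jb{n}^{-2s}|\widehat\phi(n)|^2$. For the second piece the multiplier difference is $e^{-\frac{t_1-t'}{2}}\big(e^{-\frac{t_2-t_1}{2}}m(t_2-t',n) - m(t_1-t',n)\big)$; here I would \emph{trade regularity for time-Hölder continuity}, bounding the difference by $\min(1, (t_2-t_1)\jb{n})^{2\ta} \cdot (\text{bounded})$ up to constants and absorbing the loss of $\jb{n}^{2\ta}$ into the exponent, which forces $\ta < s$ but leaves room since we only need \emph{some} $0 < \ta < 1$ — indeed we can pick $\ta$ with $\al < \ta < \min(s,1)$ once $\al < s$ is fixed, and obtain $\E[|\jb{\stick_{t_2}-\stick_{t_1},\phi}|^2] \lesssim |t_2-t_1|^{2\ta}\|\phi\|_{H^{-s+\ta}}^2 \le |t_2-t_1|^{2\ta}\|\phi\|_{H^{-s}}^2$.

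The main obstacle is this last increment estimate: one must be slightly careful that the derivative-trade does not cost \emph{more} than $s$ derivatives (otherwise $H^{-s}$ on the right-hand side is not enough), and that the short-time/long-time split of the kernel difference is organized so that the exponential damping $e^{-\frac{t_1-t'}{2}}$ survives intact after the trade. Once \eqref{i1}, \eqref{ii1}, \eqref{ii3} are in hand with the $H^{-s_{\mathrm{prop}}}$ right-hand sides for $s_{\mathrm{prop}}$ any number $< s+1$, Proposition~\ref{PROP:REG} with $d=2$ immediately yields $\stick \in C(I; C^{s_{\mathrm{prop}}-1-\ep}(\T^2))$ a.s.; choosing $s_{\mathrm{prop}}$ close to $s+1$ and $\ep$ small gives the regularity $C^\al$ for any $\al < s$, and since $I = [0,T]$ was arbitrary we get $\stick \in C([0,+\infty); C^\al(\T^2))$ a.s. The in-particular statement is then just the continuous embedding $C^\al(\T^2) \embeds W^{\al,p}(\T^2)$.
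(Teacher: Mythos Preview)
Your overall strategy---Gaussianity for the higher-moment bounds, It\^o isometry for the variance, splitting the time increment into the fresh-noise piece and the propagated piece, and trading a time-H\"older exponent for a spatial derivative---is exactly what the paper does. But there is a genuine gap in how you handle the \emph{vector} structure of $\stick_t$.

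The target space is $\CC^{\al} = C^{\al} \times C^{\al-1}$, so one must show $\pi_1\stick_t \in C^{\al}$ and $\pi_2\stick_t \in C^{\al-1}$ \emph{separately}; these require different choices of $s_{\mathrm{prop}}$ in Proposition~\ref{PROP:REG}. For $\pi_1\stick_t$ the multiplier is $\jb{n}^{-s}\cdot\frac{\sin((t-t')\jb{n})}{\jb{n}}$, and the extra factor $\jb{n}^{-1}$ is essential: it gives $\E[|\langle \pi_1\stick_t,\phi\rangle|^2]\lesssim \|\phi\|_{H^{-1-s}}^2$, hence $s_{\mathrm{prop}}=1+s$ and regularity $C^{s-\ep}$. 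Your displayed computation only uses ``$m$ bounded'' and lands on $\|\phi\|_{H^{-s}}^2$, i.e.\ $s_{\mathrm{prop}}=s$, which is one full derivative short of your claimed ``$s_{\mathrm{prop}}$ close to $s+1$''. For $\pi_2\stick_t$ the $(2,2)$-entry $\cos(\cdot)-\tfrac{\sin(\cdot)}{2\jb{n}}$ is merely bounded, so $s_{\mathrm{prop}}=s$ and regularity $C^{s-1-\ep}$ is indeed the correct answer---which matches the $C^{\al-1}$ slot, not $C^{\al}$. The paper makes this split explicit by testing against $\phi=\binom{\phi_1}{\phi_2}$, obtaining $\|\phi_1\|_{H^{-1-s}}^2+\|\phi_2\|_{H^{-s}}^2$, and then setting $\phi_2\equiv 0$ (resp.\ $\phi_1\equiv 0$) to read off the two components.

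A secondary slip: in the increment estimate you write $\|\phi\|_{H^{-s+\ta}}^2 \le \|\phi\|_{H^{-s}}^2$, but since $-s+\ta>-s$ this inequality goes the wrong way. The correct bookkeeping is to accept the $\ta$-loss in $s_{\mathrm{prop}}$ (so $s_{\mathrm{prop}}=1+s-\ta$ for $\pi_1$ and $s-\ta$ for $\pi_2$), conclude $\pi_1\stick_t\in C([0,\infty);C^{s-\ta-\ep})$ and $\pi_2\stick_t\in C([0,\infty);C^{s-1-\ta-\ep})$, and then choose $\ta>0$ small enough once $\al<s$ is fixed. This is precisely how the paper closes the argument.
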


\begin{proof}
We verify conditions (i) and (ii) in Proposition~\ref{PROP:REG} for $\pi_1 \stick_{t}(\xi)$ and $\pi_{2}\stick_{t}(\xi)$.
For any test function $\phi$ and $0\leq t_1\leq t_2\leq T$, $\jb{\pi_{j}\stick_{t_1}(\xi),\phi}$ and $\jb{\pi_{j}(\stick_{t_2}(\xi)-\stick_{t_1}(\xi)),\phi}$, $j=1,2$, are Gaussian and hence satisfy \eqref{i2} and \eqref{ii4} with $C_{p}=\wt{C}_{p}=(p-1)^{\frac{1}{2}}$. As $\stick_{0}(\xi)=0$, \eqref{ii1} and  \eqref{ii2} hold trivially. It remains to verify \eqref{i1} and \eqref{ii3}. We begin with the former. 

Fix $t>0$ and let $\phi_1$ and $\phi_2$ be test functions with support in a ball $B$ of radius $1$ and let $\phi=\vec{\phi_1}{\phi_2}$. Then, from \eqref{stick}, we have
\begin{align*}
\jb{\stick_{t}(\xi),\phi}_{L^{2}_{x}}& ={\sqrt{2}} \jb{ \xi(t'), \ind_{\{ 0\leq t'\leq t\}}\jb{\nb}^{-s}\pi_2 S(t-t')^{\ast}\phi}_{L^{2}_{x,t'}}.
\end{align*}
Thus, by \eqref{whiteprop},
\begin{align*}
\begin{aligned}
\E[ | \jb{\stick_{t}(\xi), \phi}_{L^2_{x}}|^{2}] &= {2}\int_{0}^{t} \|\jb{\nabla}^{-s}\pi_2 S(t-t')^{\ast}\phi\|_{L^2_{x}}^{2}dt' \\
& = {2}\int_{0}^{t} e^{-(t-t')}\bigg\| \frac{\sin((t-t')\jb{\nb})}{\jb{\nb} }\jb{\nb}^{-s}\phi_1 \\
& \hphantom{X + {2}\int_{0}^{t} e^{-(t-t')}\bigg\|} + \bigg(\cos((t-t')\jb{\nb})-\frac{\sin((t-t')\jb{\nb})}{2\jb{\nb}}\bigg)\jb{\nb}^{-s}\phi_2\bigg\|_{L^2_{x}}^{2}dt' \\
& \leq C(\|\phi_1\|_{H^{-1-s}}^{2}+\|\phi_2\|_{H^{-s}}^{2}),
\end{aligned}
\end{align*}
where the constant $C>0$ is independent of $t>0$. By setting $\phi_2\equiv 0$, we obtain \eqref{i1} for $\pi_1 \stick_{t}(\xi)$ which shows $\pi_1 \stick_{t}(\xi)\in C^{s-\ep}(\T^{2})$, for any fixed $t>0$. By setting $\phi_1\equiv 0$, we obtain \eqref{i1} for $\pi_2 \stick_{t}(\xi)$ which shows $\pi_2 \stick_{t}(\xi)\in C^{-1+s-\ep}(\T^{2})$, for any fixed $t>0$.
We now establish the temporal regularity of $\stick_{t}(\xi)$; namely, we verify \eqref{ii3}.
Let $0\leq t_1 \leq t_2$ and $\phi=\vec{\phi_1}{\phi_2}$ where $\phi_1, \phi_2$ are test functions with support in $B$.
We have
\begin{align}
\begin{split}
\jb{\stick_{t_2}(\xi)-\stick_{t_1}(\xi), \phi}_{L^2_x}=&\jb{\xi, \ind_{\{ t_1\leq t\leq t_2\}}\jb{\nb}^{-s}\pi_2 S(t_2-\cdot)^{\ast}\phi}_{L^2_{t,x}} \\
&+ \jb{\xi, \ind_{\{ 0\leq t\leq t_1\}}\jb{\nb}^{-s}[ \pi_2 S(t_2-\cdot)^{\ast}-\pi_2 S(t_1-\cdot)^{\ast}]\phi}_{L^2_{t,x}}.
\end{split}
\label{diff1}
\end{align}
Note that the test functions in the two terms on the right hand side of \eqref{diff1} have (essentially) disjoint supports in time. Thus, \eqref{whiteprop}, \eqref{diff1} and \eqref{derivTrade}
 imply 
\begin{align}\label{sticktdiff}
\begin{split} 
\E[ | \jb{ &\stick_{t_2}(\xi)-\stick_{t_1}(\xi), \phi}_{L^2_x}|^{2}] \\
& = \int_{t_1}^{t_2} \|\jb{\nb}^{-s}\pi_2 S(t_2-t)^{\ast}\phi\|_{L^2_x}^{2}dt \\
&\hphantom{XX}+ \int_{0}^{t_1} \| \jb{\nb}^{-s}[ \pi_2 S(t_2-t)^{\ast}-\pi_2 S(t_1-t)^{\ast}]\phi \|_{L^2_{x}}^{2}dt \\
& \les |t_2-t_1|(\|\phi_1\|_{H^{-1-s}}^{2}+\|\phi_2\|_{H^{-s}}^{2})+ |t_2-t_1|^{2\ta}( \|\phi_1\|_{H^{-1-s+\ta}}^{2}+\|\phi_2\|_{H^{-s+\ta}}^{2}) \\
& \les |t_2-t_1|^{2\ta}( \|\phi_1\|_{H^{-1-s+\ta}}^{2}+\|\phi_2\|_{H^{-s+\ta}}^{2}). 
\end{split}
\end{align}
Hence, putting $\phi_2=0$ (respectively, $\phi_1=0$), we obtain for $\ta \le \frac 12$,
\begin{align*}
\E[ | \jb{\pi_1\stick_{t_2}(\xi)-\pi_1\stick_{t_1}(\xi), \phi_1}|^{2}]&\les |t_2-t_1|^{2\ta}\|\phi_1\|_{H^{-1-s+\ta}}^{2}, \\
\E[ | \jb{\pi_2\stick_{t_2}(\xi)-\pi_2\stick_{t_1}(\xi), \phi_2}|^{2}]&\les |t_2-t_1|^{2\ta}\|\phi_2\|_{H^{-s+\ta}}^{2},
\end{align*}
which imply 
\begin{align*}
\pi_1 \stick_{t}(\xi) \in C([0,+\infty); C^{s-\ta-\ep}(\T^{2})) \quad \text{and} \quad \pi_2 \stick_{t}(\xi) \in C([0,+\infty); C^{-1-s-\ta-\ep}(\T^{2}))
\end{align*}
for any $0<\ta \leq \frac{1}{2}$.

\end{proof}

\begin{proposition} \label{PROP:stickZ}
Let $0<\al<s$. Then, 
\begin{align*}
\stick_{t}(\xi)\in C([0,+\infty); Z^{\al}) \subset C([0,+\infty); \X)
\end{align*}
 almost surely. 
In particular,
\begin{align*}
\sup_{\tau\geq 0}e^{\frac{\tau}{8}} \|S(\tau)\stick_{t}(\xi)\|_{\mathcal{C}^{\al}}<+\infty
\end{align*}
almost surely for every $t>0$ and for any $1\leq p<+\infty$. Moreover, for every $t>0$, 
\begin{align}
\E\big[   \|\stick_{t}(\xi)\|_{Z^{\al}}^{p}\big]^{\frac{1}{p}}\leq C_p, \label{momentsstick}
\end{align}
where $C_p>0$ is independent of $t$.
\end{proposition}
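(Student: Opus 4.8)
The plan is to recognise $S(\tau)\stick_t(\xi)$ as a stochastic convolution in its own right and to feed it into Proposition~\ref{PROP:REG}, exploiting the exponential factor $e^{-r/2}$ present in the propagator \eqref{linsol} to produce the exponential weight $e^{\tau/8}$ built into the spaces $Z^\al$. Fixing $t>0$, the semigroup property and \eqref{stick} give
\begin{equation*}
S(\tau)\stick_t(\xi)=\int_0^t S(\tau+t-t')\vec{0}{\sqrt2\jb{\nb}^{-s}\xi(t')}\,dt'.
\end{equation*}
Pairing with a test function $\phi=\vec{\phi_1}{\phi_2}$ supported in a unit ball and arguing exactly as in the proof of Proposition~\ref{PROP:REGSTICK} via \eqref{whiteprop}, the factor $e^{-(\tau+t-t')}$ coming from \eqref{linsol}, together with $\int_0^t e^{-(\tau+t-t')}\,dt'\le e^{-\tau}$, yields
\begin{equation*}
\E\big[|\jb{S(\tau)\stick_t(\xi),\phi}_{L^2_x}|^2\big]\lesssim e^{-\tau}\big(\|\phi_1\|_{H^{-1-s}}^2+\|\phi_2\|_{H^{-s}}^2\big),
\end{equation*}
with implicit constant independent of $t$ and $\tau$; similarly, the temporal estimate for $S(\tau_2)\stick_t(\xi)-S(\tau_1)\stick_t(\xi)$ with $k\le\tau_1\le\tau_2\le k+1$, obtained using \eqref{derivTrade} as in \eqref{sticktdiff}, carries the same gain $e^{-\tau_1}\le e^{-k}$ in front of $|\tau_2-\tau_1|^{2\ta}(\|\phi_1\|_{H^{-1-s+\ta}}^2+\|\phi_2\|_{H^{-s+\ta}}^2)$.

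Covering $\T^2$ by finitely many unit balls, I would then apply Proposition~\ref{PROP:REG} to the two components of $S(\tau)\stick_t(\xi)$, in the variable $\tau$, on each interval $I_k=[k,k+1]$: Gaussianity supplies \eqref{i2}, \eqref{ii2}, \eqref{ii4}, while the estimates above supply \eqref{i1}, \eqref{ii1}, \eqref{ii3} with $A_B,\wt{A}_B\lesssim e^{-k/2}$ (with the regularity index $s$ of Proposition~\ref{PROP:REG} taken to be $1+s$ for the first component and $s$ for the second, so that one lands in $\CC^\al$ after choosing $\ep$ small in \eqref{momentbd}). Thus \eqref{momentbd} gives $\E[\|S(\cdot)\stick_t(\xi)\|_{C(I_k;\,\CC^\al)}^p]^{1/p}\lesssim e^{-k/2}$ uniformly in $t$ for $p$ large, and summing over $k\ge0$ against $e^{\tau/8}\le e^{(k+1)/8}$ gives $\E[\|\stick_t(\xi)\|_{\overline{Z}^\al}^p]^{1/p}\lesssim1$, uniformly in $t$ (hence, by H\"older's inequality, for all $p\ge1$). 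To obtain membership in the separable space $Z^\al$ rather than $\overline{Z}^\al$, fix $\al'\in(\al,s)$, rerun the above with $\al'$ in place of $\al$, and use $\|P_{>N}\|_{\CC^{\al'}\to\CC^\al}\lesssim N^{-(\al'-\al)}$ together with the commutation $P_{>N}S(\tau)=S(\tau)P_{>N}$ to get $\|\stick_t(\xi)-P_{\le N}\stick_t(\xi)\|_{\overline{Z}^\al}\lesssim N^{-(\al'-\al)}\|\stick_t(\xi)\|_{\overline{Z}^{\al'}}\to0$ almost surely as $N\to\infty$; since $P_{\le N}\stick_t(\xi)$ is a trigonometric polynomial for a.e.\ realisation, this gives $\stick_t(\xi)\in Z^\al$ a.s., and \eqref{momentsstick} follows from the $\overline{Z}^{\al'}$-bound.

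For continuity in $t$, write $\stick_{t+h}(\xi)-\stick_t(\xi)=(S(h)-\text{Id})\stick_t(\xi)+\int_t^{t+h}S(t+h-t')\vec{0}{\sqrt2\jb{\nb}^{-s}\xi(t')}\,dt'$ and run the same computation on each of the two pieces (using \eqref{derivTrade} for the first) to obtain, for $0<h\le1$ and $0<\ta\le\frac12$,
\begin{equation*}
\E\big[|\jb{S(\tau)(\stick_{t+h}(\xi)-\stick_t(\xi)),\phi}_{L^2_x}|^2\big]\lesssim e^{-\tau}h^{2\ta}\big(\|\phi_1\|_{H^{-1-s+\ta}}^2+\|\phi_2\|_{H^{-s+\ta}}^2\big).
\end{equation*}
Feeding this (and the analogous $\tau$-increment estimate, obtained by the same splitting and a mixed finite-difference version of \eqref{derivTrade}) into Proposition~\ref{PROP:REG} on the intervals $I_k$ and summing, then invoking the Kolmogorov continuity criterion in $t$ with $\ta$ chosen small, shows that $t\mapsto\stick_t(\xi)$ admits an almost surely continuous modification with values in $Z^\al$; this modification coincides with $\stick_\cdot(\xi)$ because $P_{\le N}\stick_\cdot(\xi)$ is continuous in $t$ and $P_{\le N}\stick_\cdot(\xi)\to\stick_\cdot(\xi)$ locally uniformly in $t$ in $\overline{Z}^\al$, again by the frequency-truncation bound together with the $t$-uniform moment estimates. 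Since $Z^\al\embeds\X$, this gives the claim.

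The difficulty here is bookkeeping rather than conceptual novelty: one has to verify that the gain $e^{-\tau}$ from the first step genuinely propagates through Proposition~\ref{PROP:REG} as constants $\wt{A}_B\simeq e^{-k/2}$ on each unit interval $I_k$ — in particular for the temporal increments, whose estimate must retain the exponential factor while trading only $\ta$ spatial derivatives — so that the weight $e^{\tau/8}$ stays summable over $k$; and one must keep the extra room $\al'\in(\al,s)$ throughout in order to descend to the separable space $Z^\al$ via frequency truncation. The continuity in $t$ is the most delicate point, since the increment estimates have to hold in the $Z$-topology uniformly in the propagation parameter $\tau$.
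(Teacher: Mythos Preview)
Your proposal is correct and follows essentially the same strategy as the paper: apply Proposition~\ref{PROP:REG} to $\tau\mapsto S(\tau)\stick_t(\xi)$ on unit $\tau$-intervals $[k,k+1]$ with constants $\wt A_B\sim e^{-k/2}$, sum the resulting moment bounds against the weight $e^{\tau/8}$, and then run Kolmogorov in $t$ (the paper handles the mixed $(\tau,t)$-increment via a double mean-value estimate, which is exactly your ``mixed finite-difference version of \eqref{derivTrade}''). Your explicit frequency-truncation step to pass from $\overline{Z}^\al$ to the separable space $Z^\al$ is a point the paper glosses over; just note that for the \emph{sharp} projector $P_{>N}$ defined in the paper the bound $\|P_{>N}\|_{\CC^{\al'}\to\CC^\al}\lesssim N^{-(\al'-\al)}$ picks up a harmless logarithmic factor --- or simply use a smooth Littlewood--Paley cutoff instead, which commutes with $S(\tau)$ just as well.
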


\begin{proof}
We begin with showing, for any fixed $t>0$, $\stick_{t}(\xi) \in Z^{\al}$. The argument here 
follows \cite[Proposition 2.2.3]{t19thesis}. Fix $t\geq 0$, $T\geq 1$ and let $t_1,t_2\in [T-1,T]$ such that $t_1\leq t_2$.
Then, we have 
\begin{align*}
\E \big[ \big| \jb{ S(t_2)\stick_{t}(\xi)-S(t_1)&\stick_{t}(\xi), \phi} \big|^2   \big] \\
&=2\E\bigg[  \bigg| \int_{0}^{t}\jb{\xi(t'), \jb{\nabla}^{-s}\pi_2( S(t+t_2-t')^{\ast}-S(t+t_1-t')^{\ast})\phi} dt'  \bigg|^{2}  \bigg] \\
& = \int_{0}^{t} \| \jb{\nabla}^{-s}\pi_2( S(t+t_2-t')^{\ast}-S(t+t_1-t')^{\ast})\phi\|_{L^2}^{2}dt' \\
& \les e^{-T}\int_{0}^{t} e^{-(t-t')}|t_2-t_1|^{2\ta}( \|\phi_{1}\|_{H^{-1-s+\ta}}^{2}+\|\phi_2\|_{H^{-s+\ta}}^{2})dt' \\
& \les e^{-T}|t_2-t_1|^{2\ta}( \|\phi_{1}\|_{H^{-1-s+\ta}}^{2}+\|\phi_2\|_{H^{-s+\ta}}^{2}).
\end{align*}
Given any $\phi=\begin{pmatrix}\phi_1\\ \phi_2\end{pmatrix}$, where $\phi_1$ and $\phi_2$ are test functions with support in a ball of unit radius, we have 
\begin{align*}
\E\big[ \big|  \jb{S(T-1)\stick_{t}(\xi), \phi}  \big|^{2}\big]& = \E\big[ \big| \jb{ \stick_{t}(\xi), S(T-1)^{\ast}\phi}\big|^{2}\big] \\
&=\| \pi_1 S(T-1)^{\ast}\phi\|_{H^{-1-s}}^{2}+\|\pi_2 S(T-1)^{\ast}\phi\|_{H^{-s}}^{2} \\
& \les e^{-T}\big( \| \phi_1\|_{H^{-1-s}}^{2}+\|\phi_2\|_{H^{-s}}^{2}).
\end{align*} 
Thus, by Proposition~\ref{PROP:REG}, we have 
\begin{align}
S(\tau)\stick_{t}(\xi)\in C_{\tau}([T-1,T]; \mathcal{C}^{\al}) \label{Ssstickcts}
\end{align}
for every $t\geq 0$. From \eqref{momentbd}, we have 
\begin{align}
\E\big[ \| S(\tau)\stick_{t}(\xi)\|_{C_{\tau}([T-1,T]; \mathcal{C}^{\al})}^{p}\big]^{\frac{1}{p}}\les e^{-\frac{T}{2}}\label{momentsstick1}
\end{align}
for $p$ sufficiently large, which by Chebyshev's inequality implies 
\begin{align*}
\prob  \big(  \| S(\tau)\stick_{t}(\xi)\|_{C_{\tau}([T-1,T]; \mathcal{C}^{\al})}>e^{-\frac{T}{8}} \big) \les e^{-\frac{T}{4}}.
\end{align*}
This is summable over $T\in \N$ so that by the Borel-Cantelli lemma, we have 
\begin{align*}
\limsup_{T\to \infty} e^{\frac{T}{8}}\|S(\tau)\stick_{t}(\xi)\|_{C_{\tau}([T-1,T];\mathcal{C}^{\al})}\leq 1.
\end{align*}
Then, \eqref{Ssstickcts} implies 
\begin{align*}
\sup_{T\in \N}e^{\frac{T}{8}}\|S(\tau)\stick_{t}(\xi)\|_{C_{\tau}([T-1,T];\mathcal{C}^{\al})} <+\infty
\end{align*}
and hence, $\stick_{t}(\xi)\in Z^{\al}$.
Furthermore, by \eqref{momentsstick1} and Minkowski's inequality, we have 
\begin{align*}
\E\big[   \|\stick_{t}(\xi)\|_{Z^{\al}}^{p}\big]^{\frac{1}{p}} & \leq \sum_{T\in \N}\E\bigg[  \bigg( \sup_{\tau\in [T-1,T]}e^{\frac{\tau}{8}}\|S(\tau)\stick_{t}(\xi)\|_{\CC^{\al}}  \bigg)^{p}   \bigg]^{\frac{1}{p}} \\
& \leq \sum_{T\in \N}Ce^{-\frac{3T}{8}}\leq C,
\end{align*}
which verifies \eqref{momentsstick}.
Now we show $\stick_{t}(\xi)\in C([0,+\infty);Z^{\al})$.
We fix $T\geq 1$, let $\tau_1,\tau_2\in [T-1,T]$ and $t_2\geq t_1\geq 0$.
Following the same lines as above and using the computations as in \eqref{sticktdiff}, we have 
\begin{align*}
&\phantom{=}\E\big[ \big|  \jb{S(T-1)(\stick_{t_2}(\xi)-\stick_{t_1}(\xi)), \phi}  \big|^{2}\big]\\
& = \E\big[ \big| \jb{ \stick_{t_2}(\xi)-\stick_{t_1}(\xi), S(T-1)^{\ast}\phi}\big|^{2}\big] \\
&\les |t_2-t_1|^{2\ta } \big(\| \pi_1 S(T-1)^{\ast}\phi\|_{H^{-1-s+\ta}}^{2}+\|\pi_2 S(T-1)^{\ast}\phi\|_{H^{-s+\ta}}^{2}\big) \\
& \les e^{-T}|t_2-t_1|^{2\ta}\big( \| \phi_1\|_{H^{-1-s+\ta}}^{2}+\|\phi_2\|_{H^{-s+\ta}}^{2}).
\end{align*}
Also, 
\begin{align*}
\E \big[ &\big| \jb{ S(\tau_2)(\stick_{t_2}(\xi)-\stick_{t_1}(\xi))-S(\tau_1)(\stick_{t_2}(\xi)-\stick_{t_1}(\xi)), \phi} \big|^2   \big] \\
& \les \int_{t_1}^{t_2}\big\| \jb{\nabla}^{-s}\pi_2 \big(S(\tau_2+t_2-t')-S(\tau_1+t_2-t') \big)^{\ast}\phi\big\|_{L^2_x}^{2}dt' \\
& \hphantom{Xx} +\int_{0}^{t_1}\big\| \jb{\nabla}^{-s}\pi_2 \big( S(\tau_2+t_2-t')-S(\tau_2+t_1-t')\\
&\hphantom{Xx+\int_{0}^{t_1}\big\| \jb{\nabla}^{-s}\pi_2 \big() } -S(\tau_1 +t_2-t')+S(\tau_1+t_1-t')  \big)^{\ast}\phi\big\|_{L_{x}^{2}}^{2}dt' \\
& =: I+II.
\end{align*}
By \eqref{derivTrade}, we have 
\begin{align*}
I \les e^{-T}|\tau_2-\tau_1|^{2\ta} \bigg( \int_{t_1}^{t_2}e^{-(t_2-t')}dt' \bigg)\|\phi\|_{\H^{-1-s+\ta}}^{2} \les e^{-T}|\tau_2-\tau_1|^{2\ta}|t_2-t_1|^{2\ta}  \|\phi\|_{\H^{-1-s+\ta}}^{2}.
\end{align*}
By similar computations but using the double mean value theorem (see \cite[Lemma 2.3]{DMVT}), we obtain 
\begin{align*}
II \les e^{-T}|\tau_2-\tau_1|^{2\ta}|t_2-t_1|^{2\ta}  \|\phi\|_{\H^{-1-s+2\ta}}^{2},
\end{align*}
and thus,
\begin{align*}
\E \big[ \big| \jb{ S(\tau_2)(\stick_{t_2}(\xi)-\stick_{t_1}(\xi))-S(\tau_1)(\stick_{t_2}(\xi)-\stick_{t_1}(\xi)),& \phi} \big|^2   \big]  \\
& \les e^{-T}|\tau_2-\tau_1|^{2\ta}|t_2-t_1|^{2\ta}  \|\phi\|_{\H^{-1-s+2\ta}}^{2}. 
\end{align*}
It follows from the moment bound \eqref{momentbd} that for $p\gg 1$, 
\begin{align}
\E \big[ \|S(\tau)(\stick_{t_2}(\xi)-\stick_{t_1}(\xi))\|_{C_{\tau}([T-1,T]; \mathcal{C}^{\al})}^{p} \big]^{\frac{1}{p}} \les |t_2-t_1|^{\ta}e^{-\frac{T}{4}}. \label{diffctyX}
\end{align}
Now, by Minkowski's inequality and \eqref{diffctyX}, we have
\begin{align*}
\E[ \| \stick_{t_2}(\xi)-\stick_{t_1}(\xi)\|_{Z^\al}^{p}]^{\frac{1}{p}} & \leq \sum_{T\in \N} \E\bigg[ \bigg(\sup_{\tau\in [T-1,T]}e^{\frac{\tau}{8}}\|S(\tau)(\stick_{t_2}(\xi)-\stick_{t_1}(\xi))\|_{\mathcal{C}^{\al}}\bigg)^{p}\bigg]^{\frac{1}{p}} \\
& \les \sum_{T\in \N} e^{\frac{T}{8}} \E \big[ \|S(\tau)(\stick_{t_2}(\xi)-\stick_{t_1}(\xi))\|_{C_{\tau}([T-1,T]; \mathcal{C}^{\al})}^{p} \big]^{\frac{1}{p}} \\
& \les |t_2-t_1|^{\ta}\sum_{T\in \N} e^{-\frac{3T}{8}} \les  |t_2-t_1|^{\ta}.
\end{align*}
Now, Kolmogorov's continuity criterion implies $\stick_{t}(\xi)\in C([0,+\infty);Z^{\al})$.
In particular, we have 
\begin{align}
\E\big[ \|\stick_{t}(\xi)\|_{C([0,T];Z^{\al})}^{p}\big]^{\frac{1}{p}}\le C(p,T). \label{stickC01}
\end{align}

\end{proof}

\noi
Throughout the rest of this article, we fix the value of $\al\in (0, s\wedge \frac{1}{3})$.

\section{Well-posedness theory for (SDNLW)}

\subsection{Local well-posedness}
We have the following local well-posedness result for the equation~\eqref{SDNLW}.
\begin{proposition}\label{PROP:LWP}
Let $N\in \N$, $s>0$, $\g \in \R$ and $\u_0 \in \X$. Then, the equations 
\begin{align}
\v(t) &=S(t)\v_0 - \int_0^t S(t-t')\vec{0}{\NN_{\g}[ \pi_1 (S(t')\u_0+\stick_{t'}(\xi)+\v(t'))]  } dt', \label{veqnd}\\
\v_{N}(t) &=S(t)\v_0 - \int_0^t S(t-t') P_{\leq N} \vec{0}{\NN_{\g}[\pi_1 P_{\leq N}(S(t')\u_0+\stick_{t'}(\xi)+\v_{N}(t'))] } dt'.
\label{veqnNd}
\end{align}
are almost surely locally-well posed in $\H^1(\T^2)$ in the following sense: For every $\v_0\in \H^1$, there exists $T(\|\v_0\|_{\H^1}, \|\u_0\|_{\X}, \|\stick_{t}(\xi)\|_{C([0,1];\W^{\al,6})})>0$ such that there exists a unique solution $\v_{N}(t; \v_0, \u_0,\xi)\in C_{t}\H^1$ and $\v(t;\v_0, \u_0,\xi)\in C_{t}\H^{1}$, to \eqref{veqnNd} and \eqref{veqnd}, respectively on a maximal time interval $[0,T^{\ast}(\v_0,\u_0, \xi)), [0, T^{\ast}_{N}(\v_0,\u_0,\xi))$ respectively, with $T^{\ast}$, $T^{\ast}_{N}>T$ almost surely. 
Moreover, we have the following blow-up criterion: if $T^{\ast}<+\infty$, then 
\begin{align*}
\lim_{t\to T^{\ast}}\|\v(t)\|_{\H^1}=+\infty, 
\end{align*}
or if $T^{\ast}_{N}<+\infty$, then
\begin{align*}
 \lim_{t\to T_{N}^{\ast}}\|\v_N(t)\|_{\H^1}=+\infty.
\end{align*}
\end{proposition}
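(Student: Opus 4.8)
The plan is to run a standard Banach fixed point argument for the Duhamel map associated to \eqref{veqnd} (resp. \eqref{veqnNd}) in the space $C([0,T];\H^1)$, for $T$ small depending on $\|\v_0\|_{\H^1}$, $\|\u_0\|_{\X}$ and $\|\stick_t(\xi)\|_{C([0,1];\W^{\al,6})}$. Write $\Gamma \w(t) := S(t)\v_0 - \int_0^t S(t-t')\bigl(\begin{smallmatrix} 0 \\ \NN_\g[\pi_1(S(t')\u_0 + \stick_{t'}(\xi) + \w(t'))]\end{smallmatrix}\bigr)\,dt'$. The heart of the matter is the nonlinear estimate: since $\NN_\g[v] = v^3 - 3\g v$, one must bound $\|\jb{\nb}^{-1} \NN_\g[\pi_1(S(t')\u_0 + \stick_{t'}(\xi) + \w(t'))]\|_{L^2}$, i.e. essentially $\|v\|_{L^2}$ and $\|v^3\|_{W^{-1,2}}$, where $v = \pi_1 S(t')\u_0 + \pi_1\stick_{t'}(\xi) + w$, and $w(t') \in H^1$. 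By the dual Sobolev embedding $L^{6/5} \embeds W^{-1,2}$ on $\T^2$, it suffices to control $\|v^3\|_{L^{6/5}} = \|v\|_{L^{18/5}}^3$; and since $H^1 \embeds L^q$ for all $q < \infty$ on $\T^2$, the contribution of $w$ is fine, the contribution of $S(t')\u_0$ is handled by $\|S(t')\u_0\|_{L^6} \les \|S(t')\u_0\|_{\Wal} \les \|\u_0\|_{\X}$ (using $\al < 1/3$ so $\frac{2}{\al} > 6$, and \eqref{Xalpha}), and the contribution of $\pi_1\stick_{t'}(\xi)$ is handled by $\|\pi_1\stick_{t'}(\xi)\|_{L^6} \les \|\pi_1\stick_{t'}(\xi)\|_{W^{\al,6}} \les \|\stick_t(\xi)\|_{C([0,1];\W^{\al,6})}$, which is almost surely finite for $t' \in [0,1]$ by Proposition~\ref{PROP:REGSTICK}. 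Combining with \eqref{SonH} to bound $\|S(t-t')(\cdot)\|_{\H^1} \les \|(\cdot)\|_{\H^1}$ and absorbing the $t'$-integral into a factor of $T$, one gets $\|\Gamma \w\|_{C_T \H^1} \les \|\v_0\|_{\H^1} + T(\|\u_0\|_{\X} + \|\stick\|_{C([0,1];\W^{\al,6})} + R)^3$ on the ball $\{\|\w\|_{C_T\H^1} \le R\}$ with $R \sim \|\v_0\|_{\H^1}$; a similar computation with the algebraic identity $a^3 - b^3 = (a-b)(a^2+ab+b^2)$ gives the contraction estimate $\|\Gamma\w_1 - \Gamma\w_2\|_{C_T\H^1} \le \tfrac12 \|\w_1 - \w_2\|_{C_T\H^1}$ for $T$ small. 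This produces the unique fixed point $\v$ on $[0,T]$, and the truncated case is identical since $P_{\le N}$ is bounded on every $L^p$ and on $\H^1$ uniformly in $N$ (the bound on $T$ being $N$-independent).

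The blow-up criterion is then the usual continuation argument: iterate the local theory from any time $t_0 < T^\ast$ with data $\v(t_0) \in \H^1$, noting that the existence time depends only on $\|\v(t_0)\|_{\H^1}$, $\|\u_0\|_{\X}$ and $\|\stick_t(\xi)\|_{C([t_0,t_0+1];\W^{\al,6})}$ — and the latter two are bounded on any compact time interval — so if $\limsup_{t \to T^\ast}\|\v(t)\|_{\H^1} < \infty$ the solution extends past $T^\ast$, contradicting maximality. One mild technical point: for $t_0 \ge 1$ one must replace $[0,1]$ by $[t_0, t_0+1]$ in the stochastic convolution norm; this is harmless since $\stick_t(\xi) \in C([0,\infty);\W^{\al,6})$ a.s.\ by Proposition~\ref{PROP:REGSTICK}, so the relevant norm is finite on any bounded interval.

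The main obstacle is the absence of the embedding $H^1 \embeds L^\infty$ in two dimensions, which is precisely why one cannot simply estimate the cubic term in $L^\infty$; the fix is to exploit that on $\T^2$ the embedding $H^1 \embeds L^q$ holds for every finite $q$ together with the choice $\al < 1/3$, which guarantees $\Wal \embeds W^{\al,6} \embeds L^6$ so that the rough pieces $S(t')\u_0$ and $\stick_{t'}(\xi)$ land in $L^6$, and the product structure $v^3 \in L^{6/5} \embeds H^{-1}$ closes the estimate. The only care needed is bookkeeping the various Hölder exponents so that $\frac{1}{18/5} = \frac{1}{q_1} + \frac{1}{q_2} + \frac{1}{q_3}$ with each piece placed in the space where it is controlled; this is routine once the scheme above is set up.
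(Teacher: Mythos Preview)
Your proposal is correct and follows essentially the same approach as the paper: a Banach fixed point argument in $C([0,T];\H^1)$, with the nonlinear estimate closing because $\pi_1 S(t')\u_0$, $\pi_1\stick_{t'}(\xi)$, and $w$ all lie in $L^6$ (via $\al<\tfrac13$, Proposition~\ref{PROP:REGSTICK}, and $H^1\embeds L^6$ on $\T^2$ respectively). The only cosmetic difference is that the paper estimates the cubic directly in $L^2\subset H^{-1}$ (writing $\NN_\gamma[\,\cdot\,]=v^3+Q_N(v)$ with explicit coefficients $a_N,b_N,c_N$ and using $\|v\|_{L^6}^3$) rather than passing through $L^{6/5}\embeds H^{-1}$; the explicit $Q_N$ expansion is then reused in the global energy estimate, which is why the paper sets it up that way.
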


\begin{proof}
We let 
\begin{align*}
\Gb[\v(t)]&:= S(t)\v_0 - \int_0^t S(t-t')\vec{0}{v(t')^3+Q(v(t'))  } dt', \\
\Gb_{N}[\v_N(t)]&:=S(t)\v_0 - \int_0^t S(t-t')P_{\leq N} \vec{0}{(P_{\leq N}v_{N}(t'))^3+Q_{N}(P_{\leq N}v_N (t')) } dt',
\end{align*}
where $Q_{N}$ is the quadratic polynomial
\begin{align}
Q_{N}(x)=a_N x^{2}+b_N x+c_N, 
\label{QN}
\end{align}
and we define
\begin{align}
a_N& =a_N(t)= 3u_{0,N}+3\pi_{1}\stick_{t}^{N}(\xi), \label{aN}\\
b_N& =b_N(t)=3( u_{0,N}^2+2 u_{0,N}\pi_1 \stick_{t}^{N}(\xi)+\wick{(\stick_{t}^{N}(\xi))^{2}}_{\g} ), \label{bN}\\
c_N&=c_N(t)=u_{0,N}^{3}+3u_{0,N}^{2}\pi_1\stick_{t}^{N}(\xi)+3u_{0,N}\wick{(\stick_{t}^{N}(\xi))^{2}}_{\g} + \wick{(\stick_{t}^{N}(\xi))^{3}}_{\g}, \label{cN}
\end{align}
where we have also defined
\begin{align}
u_{0,N}&=\pi_1  P_{\leq N}(S(t)\u_0),  \notag\\
\stick_{t}^{N}(\xi)& =\pi_{1} P_{\leq N}\stick_{t}(\xi), \notag\\
\wick{(\stick_{t}^{N}(\xi))^{2}}_{\g} &= (\pi_1 P_{\leq N}\stick_{t}(\xi))^{2}-\g, \notag \\
\wick{(\stick_{t}^{N}(\xi))^{3}}_{\g} & = (\pi_1 P_{\leq N}\stick_{t}(\xi))^{3}-3\g \pi_1 P_{\leq N}\stick_{t}(\xi). \notag
\end{align} 
The above definitions clearly extend to the case when $N=+\infty$ in view of the notation $P_{\leq +\infty}=\text{Id}$.

From the boundedness of $P_{\leq N}$ and Young's inequality, for any $t\in [0,1]$, we have
\begin{align}
\|a_N(t)\|_{L^{6}_{x}}& \les \|\u_0\|_{\X}+\| \stick_{t}^{N}(\xi)\|_{\W^{\al,6}}, \label{aest}\\
\| b_N(t)\|_{L^3_{x}}&\les \|\u_0\|_{\X}^{2}+\|\stick_{t}^{N}(\xi)\|_{\W^{\al,6}}^{2}+\| \wick{(\stick_{t}^{N}(\xi))^{2}}_{\g}\|_{L^3_x},\label{best} \\
\|c_N(t)\|_{L^2_{x}}&\les \|\u_0\|^{3}_{\X}+\|\stick_{t}^{N}(\xi)\|_{\W^{\al,6}}^{3}+\| \wick{(\stick_{t}^{N}(\xi))^{2}}_{\g}\|_{L^3_x} + \| \wick{(\stick_{t}^{N}(\xi))^{3}}_{\g}\|_{L^2_x}. \label{cest}
\end{align}
We will show $\Gb$ and $\Gb_{N}$ are strict contractions on a ball 
\begin{align*}
B_{R}=\{ \u \, :\, \|\u\|_{C([0,T];\H^1)}\leq R\} \subseteq C([0,T];\H^1)
\end{align*}
for a time $T(\|\v_0\|_{\H^1}, \|\u_0\|_{\X}, \| \stick_{t}(\xi)\|_{C([0,1];\W^{\al,6})})$ which is positive almost surely. 
For $\v \in B_R$, the boundedness of $S(t)$ on $\H^1$, $P_{\leq N}$ on $\H^1$ and $L^{3}$, Sobolev embeddings and \eqref{aest}, \eqref{best} and \eqref{cest} imply
\begin{align*}
&\phantom{\les\ }\| \Gb[\v_N (t)]\|_{\H^1} \\
& \les \|\v_0\|_{\H^1}+ T\sup_{0\leq t\leq T} \Bigg\| \vec{0}{(P_{\leq N}v_{N}(t'))^3+Q_{N}(P_{\leq N}v_N (t')) } \Bigg\|_{\H^1} \\
& \les \|\v_0\|_{\H^1}+ T\sup_{0\leq t\leq T} \| (P_{\leq N}v_{N}(t'))^3+Q_{N}(P_{\leq N}v_N (t')) \|_{L^2} \\
& \les \|\v_0\|_{\H^1}+ T\sup_{0\leq t\leq T} \bigg( \|\v_N\|_{\H^1}^3+\|a_N\|_{L^6}\|\v_N\|_{\H^1}^{2}+\|b_N\|_{L^3}\|\v_N\|_{\H^1}+\|c_N\|_{L^2}   \bigg) \\
& \les \|\v_0\|_{\H^1}+T\sup_{0\leq t\leq T} \bigg( \|a_N\|_{L^6}^{3}+\|b_N\|_{L^3}^{\frac{3}{2}}+\|c_N\|_{L^2}   \bigg) +TR^3 \\
&\les \|\v_0\|_{\H^1}+T( \|\u_0\|_{\X}^3 + \| \stick_{t}(\xi)\|^{3}_{C([0,1]; \W^{\al,6})}+\| \wick{(\stick_{t}^{N}(\xi))^{2}}_{\g}\|_{C([0,1]; L^3)}^{\frac{3}{2}} \\
& \hphantom{XXXXX}+ \| \wick{(\stick_{t}^{N}(\xi))^{3}}_{\g}\|_{C([0,1]; L^2)})+TR^3.
\end{align*}
Thus, choosing $R=2C(1+\|\v_0\|_{\H^{1}})$ and 
\begin{align*}
T< 1 \wedge {R}\big[2C\big(& \|\u_0\|_{\X}^3 + \| \stick_{t}(\xi)\|^{3}_{C([0,1]; \W^{\al,6})}\\
&+\| \wick{(\stick_{t}^{N}(\xi))^{2}}_{\g}\|_{C([0,1]; L^3)}^{\frac{3}{2}} + \| \wick{(\stick_{t}^{N}(\xi))^{3}}_{\g}\|_{C([0,1]; L^2)} +R^3\big)\big]^{-1},
\end{align*} 
shows $\Gb$ maps $B_R$ into itself. 
For $\v_N, \w_N\in B_R$ with $\v_N (0)=\w_N(0)=\v_0$, \eqref{aest} and \eqref{best} yield
\begin{align*}
\| &\Gb[\v_N (t)] - \Gb[\w_N (t)] \|_{\H^1}  \\ 
& \les T\sup_{0\leq t\leq T} \Bigg\| \vec{0}{(P_{\leq N}v_{N}(t'))^3+Q_{N}(P_{\leq N}v_N (t'))- (P_{\leq N}w_{N}(t'))^3-Q_{N}(P_{\leq N}w_N (t'))} \Bigg\|_{\H^1} \\
& \les T\sup_{0\leq t\leq T} \|(P_{\leq N}v_{N}(t'))^3+Q_{N}(P_{\leq N}v_N (t'))- (P_{\leq N}w_{N}(t'))^3-Q_{N}(P_{\leq N}w_N (t'))\|_{L^2} \\
& \les T\sup_{0\leq t\leq T} ( \|\v_N\|_{\H^1}^{2}+\|\w_N\|_{\H^1}^2+\|a_N\|_{L^6}(\|\v_N\|_{\H^1}+\|\w_N\|_{\H^1})+\|b_N\|_{L^3}) \|\v_N -\w_N\|_{\H^1} \\
&\les  T (R^2+\|\u_0\|_{\X}^2+\| \stick_{t}(\xi)\|_{C([0,1];\W^{\al,6})}^{2}+\| \wick{(\stick_{t}^{N}(\xi))^{2}}_{\g}\|_{C([0,1]; L^3)})\|\v_N-\w_N\|_{C([0,T];\H^1)}.
\end{align*} 
Thus reducing $T$ further, if necessary, we have 
\begin{align*}
\| \Gb[\v_N (t)] - \Gb[\w_N (t)] \|_{C([0,T];\H^1)}\leq \frac{1}{2}\|\v_N-\w_N\|_{C([0,T];\H^1)}. 
\end{align*}
\end{proof}

\begin{remark}
Following the argument in \cite{GKO}, we expect to be able to extend the local well-posedness statement to $\v \in \H^{\sigma}$, $\sigma > \frac 14$. However, in view of the global well-posedness result given by Proposition \ref{PROP:GWP}, we decided not to pursue this improvement, since we will always work with $\v \in \H^1$.
\end{remark}

We also have the following continuity and convergence results, the proofs of which follow from Proposition \ref{PROP:LWP} and standard arguments. 
See for instance the proofs of Proposition 3.2 and Lemma 3.3 in \cite{t18erg}.

\begin{lemma}\label{LEM:ctyofflow} Given $\u_0\in \X$, let $\v(\u_0)$ denote the solution to \eqref{veqn} on the time interval $[0, T^{\ast})$ and let $T<T^{\ast}$. Then, there exists a neighbourhood $\mathcal{U}\subset \X$ of $\u_0$ such that for every $\tilde{\u}_{0}\in \mathcal{U}$, $\v(\tilde{\u}_{0})$ is the solution to \eqref{veqn} on $[0,T]$ and we have 
\begin{align*}
\lim_{\|\tilde{\u}_{0}-\u_0 \|_{\X}\to 0}\big\| \v(\tilde{\u}_{0})-\v(\u_0)\big\|_{C([0,T]; \H^{1})} =0.
\end{align*}

\end{lemma}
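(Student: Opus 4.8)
The plan is to run a Gronwall-type continuity argument on $[0,T]$, subdividing the interval into finitely many short pieces on which the difference of two solutions is controlled by a contraction-type estimate, in the spirit of the proof of Proposition~\ref{PROP:LWP}. Since $T<T^\ast$, the solution $\v(\u_0)$ is bounded on $[0,T]$; put $M:=1+\sup_{0\le t\le T}\|\v(\u_0)(t)\|_{\H^1}$. We record three facts that compensate for the missing embedding $\H^1\embeds L^\infty$: by \eqref{S(t)onX} and the Sobolev embedding $W^{\al,\frac{2}{\al}}(\T^2)\embeds L^6$ (valid since $\al<\frac13$) we have $\|\pi_1 S(t)\w\|_{L^6}\les e^{-t/8}\|\w\|_\X\le\|\w\|_\X$; by $\H^1\embeds L^6$ we have $\|\pi_1\w\|_{L^6}\les\|\w\|_{\H^1}$; and by Proposition~\ref{PROP:REGSTICK}, $K:=\|\stick_t(\xi)\|_{C([0,T];\W^{\al,6})}<+\infty$ almost surely. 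Finally, since the integral equation \eqref{veqn} carries no initial term, both $\v(\u_0)$ and $\v(\tilde\u_0)$ vanish at $t=0$.

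\textbf{The difference estimate on a short interval.} Fix $\tilde\u_0\in\X$ with $\|\tilde\u_0-\u_0\|_\X\le1$; by Proposition~\ref{PROP:LWP} the solution $\tilde\v:=\v(\tilde\u_0)$ exists on a maximal interval, and we set $\w:=\tilde\v-\v(\u_0)$. Subtracting the two copies of \eqref{veqn} and using the identity $\NN_\g[a]-\NN_\g[b]=(a-b)(a^2+ab+b^2-3\g)$ with $a=\pi_1(S(t')\tilde\u_0+\stick_{t'}+\tilde\v)$ and $b=\pi_1(S(t')\u_0+\stick_{t'}+\v(\u_0))$, so that $a-b=\pi_1 S(t')(\tilde\u_0-\u_0)+\pi_1\w(t')$, together with the boundedness of $S(t)$ on $\H^1$ (from \eqref{SonH}) and H\"older's inequality, gives on any interval $[a_0,a_0+\tau]\subseteq[0,T]$ with $\tau\le1$ on which $\|\tilde\v(t)\|_{\H^1}\le M$ the bound
\begin{align*}
\|\w\|_{C([a_0,a_0+\tau];\H^1)}\les\tau\,D\,\big(\|\w(a_0)\|_{\H^1}+\|\tilde\u_0-\u_0\|_\X+\|\w\|_{C([a_0,a_0+\tau];\H^1)}\big),
\end{align*}
where $D=D(M,\|\u_0\|_\X,K,\g)$ (for the bounds on $\|a\|_{L^6},\|b\|_{L^6}$ one uses $\|\tilde\u_0\|_\X\le\|\u_0\|_\X+1$ and $\|\tilde\v(t)\|_{\H^1}\le M$). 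Choosing $\tau_0=\tau_0(M,\|\u_0\|_\X,K,\g)$ with $\tau_0 D$ small enough to absorb the last term, we obtain
\begin{align*}
\|\w\|_{C([a_0,a_0+\tau_0];\H^1)}\le C_0\big(\|\w(a_0)\|_{\H^1}+\|\tilde\u_0-\u_0\|_\X\big),
\end{align*}
with $C_0=C_0(M,\|\u_0\|_\X,K,\g)$, uniformly over all such $\tilde\u_0$.

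\textbf{Iteration and bootstrap.} Let $T_\star$ be the supremum of $t\le T$ such that $\tilde\v$ exists on $[0,t]$ and $\|\tilde\v(s)\|_{\H^1}\le M$ for every $s\le t$. On $[0,T_\star)$ we iterate the last estimate over the consecutive intervals of length $\tau_0$, starting from $\w(0)=\0$, which yields for $t<T_\star$
\begin{align*}
\|\w\|_{C([0,t];\H^1)}\le\Big(\sum_{j=1}^{\lceil T/\tau_0\rceil}C_0^{\,j}\Big)\|\tilde\u_0-\u_0\|_\X=:C_1\|\tilde\u_0-\u_0\|_\X,
\end{align*}
with $C_1=C_1(M,\|\u_0\|_\X,K,\g,T)$. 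Choosing $\mathcal U:=\{\tilde\u_0\in\X:\|\tilde\u_0-\u_0\|_\X<\delta\}$ with $C_1\delta<\tfrac12$, we get $\|\tilde\v(s)\|_{\H^1}\le\|\v(\u_0)(s)\|_{\H^1}+\tfrac12\le M-\tfrac12$ on $[0,T_\star)$; hence by the blow-up criterion of Proposition~\ref{PROP:LWP}, $\tilde\v$ cannot blow up before $T_\star$, and a routine open--closed argument forces $T_\star=T$. Therefore $\v(\tilde\u_0)$ solves \eqref{veqn} on all of $[0,T]$ and $\|\v(\tilde\u_0)-\v(\u_0)\|_{C([0,T];\H^1)}\le C_1\|\tilde\u_0-\u_0\|_\X\to0$. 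The only delicate point is the uniformity of the local existence time $\tau_0$ over $\mathcal U$, which is precisely what the bootstrap on $\|\tilde\v\|_{\H^1}$ provides; because $\H^1\not\embeds L^\infty$ here, every nonlinear estimate must be routed through $L^6$, but the two-dimensional cubic nonlinearity is exactly borderline for this, so the contraction constants remain uniform.
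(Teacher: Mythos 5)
Your argument is correct and is exactly the ``standard argument from Proposition~\ref{PROP:LWP}'' that the paper invokes without detail (it only points to \cite{t18erg}): a difference estimate via the factorisation $\NN_\g[a]-\NN_\g[b]=(a-b)(a^2+ab+b^2-3\g)$, with all nonlinear terms routed through $L^6$ using $\W^{\al,\frac2\al}\embeds L^6$, $\H^1\embeds L^6$ and Proposition~\ref{PROP:REGSTICK}, followed by iteration over intervals of uniform length and a bootstrap on $\|\tilde\v\|_{\H^1}$ to reach $T$. The only blemish is the displayed short-interval bound, where the term $\|\w(a_0)\|_{\H^1}$ should appear with a constant $O(1)$ (from $S(t-a_0)\w(a_0)$) rather than the prefactor $\tau D$; this does not affect the conclusion you draw from it.
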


\begin{lemma} \label{LEM:APPROXV}
Let $\v_0\in \H^1$, $\u_0\in \X$ and $\v_N$ the solution to \eqref{veqnNd}. Suppose 
\begin{align*}
\sup_{N\in \N}\sup_{0\leq t\leq \overline{T}}\|\v_N(t)\|_{\H^1}\leq K<+\infty.
\end{align*}
Then, the solution $\v$ to \eqref{veqnd} satisfies $T^{\ast}\geq \overline{T}$, 
\begin{align*}
\sup_{0\leq t\leq \overline{T}}\|\v(t)\|_{\H^1}\leq K,
\end{align*}
and
\begin{align*}
\lim_{N\to \infty} \| \v-\v_N \|_{C([0,\overline{T}];\H^1)}=0.
\end{align*}

\end{lemma}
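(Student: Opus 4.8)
The plan is to prove Lemma~\ref{LEM:APPROXV} by combining the local theory of Proposition~\ref{PROP:LWP} with a standard continuation/bootstrap argument, using the uniform-in-$N$ bound on $\|\v_N(t)\|_{\H^1}$ as the a priori control that prevents blow-up and drives the convergence. First I would fix the relevant deterministic data: by Proposition~\ref{PROP:REGSTICK} and Proposition~\ref{PROP:stickZ} we have $\stick_{t}(\xi) \in C([0,\overline T];\W^{\al,6})$ and, by boundedness of $P_{\leq N}$ and Young's inequality (cf.\ \eqref{aest}--\eqref{cest}), the renormalised powers $\wick{(\stick_t^N(\xi))^2}_\g$, $\wick{(\stick_t^N(\xi))^3}_\g$ are bounded in $C([0,\overline T];L^3)$ and $C([0,\overline T];L^2)$ respectively, uniformly in $N$ (here one also uses that $P_{\leq N}\stick_t(\xi) \to \stick_t(\xi)$, so that the truncated Wick powers converge to the untruncated ones in these norms). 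Call $M$ a common bound on all these quantities together with $\|\u_0\|_{\X}$ and $K$.

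Next, the key observation is that the local existence time $T$ in Proposition~\ref{PROP:LWP} depends only on $\|\v_0\|_{\H^1}$, $\|\u_0\|_{\X}$, and the norms of the stochastic objects listed there — crucially \emph{not} on $N$. Since $\sup_{N}\sup_{[0,\overline T]}\|\v_N(t)\|_{\H^1}\le K$, applying the local theory on successive intervals of uniform length $T_0 = T_0(K, M) > 0$ shows that $T^\ast_N > \overline T$ for every $N$, and the blow-up criterion then forces $T^\ast > \overline T$ for the untruncated equation as well — except that to conclude this for $\v$ I actually need the convergence $\v_N \to \v$, so the cleaner route is to run the convergence argument on one interval of length $T_0$ at a time and then iterate. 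On $[0,T_0]$, both $\v$ and $\v_N$ lie in $B_R \subset C([0,T_0];\H^1)$ with $R = R(K)$, and I estimate $\|\v - \v_N\|_{C([0,T_0];\H^1)}$ by writing the difference of the two Duhamel formulas \eqref{veqnd}, \eqref{veqnNd}. This difference splits into: (a) terms where the nonlinearity is the same cubic-plus-quadratic expression evaluated at $\v$ versus $\v_N$, controlled by the Lipschitz bound already established in the proof of Proposition~\ref{PROP:LWP}, giving a factor $C T_0 (R^2 + M^2)\|\v - \v_N\|$; and (b) ``error'' terms coming from the frequency projections $P_{\leq N}$ and from $\stick_t^N$ versus $\stick_t$, which tend to $0$ in the appropriate norms as $N\to\infty$ by dominated convergence (each $P_{\leq N}$ converges strongly to the identity on the relevant Lebesgue/Sobolev spaces, and the Wick powers converge as noted above). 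Choosing $T_0$ small enough that $CT_0(R^2+M^2) \le \tfrac12$, absorbing, and letting $N\to\infty$ gives $\|\v - \v_N\|_{C([0,T_0];\H^1)}\to 0$, which in particular yields $\sup_{[0,T_0]}\|\v(t)\|_{\H^1}\le K$ by lower semicontinuity of the norm under the (uniform, hence pointwise) limit.

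Then I iterate: having the convergence and the bound $\le K$ on $[0,T_0]$, I restart the same argument at time $T_0$ with new initial data $\v(T_0)$, $\v_N(T_0)$, which satisfy $\|\v_N(T_0)\|_{\H^1}\le K$, so the next step has the \emph{same} length $T_0$ (since $T_0$ depended only on $K$ and $M$, and $M$ is unchanged — here it matters that the bound on $\|\stick_t(\xi)\|$ over $[0,\overline T]$ is uniform in the subinterval, by Proposition~\ref{PROP:stickZ}). After $\lceil \overline T / T_0\rceil$ steps we cover $[0,\overline T]$, obtaining $T^\ast \ge \overline T$, $\sup_{[0,\overline T]}\|\v(t)\|_{\H^1}\le K$, and $\|\v - \v_N\|_{C([0,\overline T];\H^1)}\to 0$. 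I expect the main obstacle to be a bookkeeping one rather than a conceptual one: carefully verifying that \emph{all} the constants in the contraction/difference estimates — the local time $T_0$, the ball radius $R$, and the Lipschitz constant — are genuinely independent of $N$, and establishing the convergence of the truncated stochastic data (Wick powers of $P_{\leq N}\stick_t$) to the untruncated ones in $C([0,\overline T]; L^3)$ and $C([0,\overline T];L^2)$, which is where one must be a little careful since $\stick_t(\xi)$ is only a $\W^{\al,6}$ (not $L^\infty$) object and the square/cube are handled via Hölder and the uniform-in-$t$ moment bounds \eqref{momentsstick}, \eqref{stickC01}. As the statement notes, this is precisely the kind of argument in \cite[Proposition 3.2, Lemma 3.3]{t18erg}, so I would cite that for the routine parts.
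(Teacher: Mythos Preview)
Your proposal is correct and matches the paper's approach: the paper does not give a proof of this lemma, stating only that it ``follows from Proposition~\ref{PROP:LWP} and standard arguments'' and referring to \cite[Proposition 3.2, Lemma 3.3]{t18erg}, which is exactly the iteration-of-local-theory scheme you outline. Your identification of the two ingredients (uniform-in-$N$ local time and Lipschitz difference estimate, plus convergence of the truncated stochastic data) and of the bookkeeping points to watch is accurate.
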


\subsection{Global well-posedness}

For fixed $N\in \N\cup \{0\}$, let $\v_N$ be a solution to \eqref{veqnN} and write $\v_N=\vec{v_N}{\dt v_N}$.
The goal in this section is to prove the following result:

\begin{proposition}\label{PROP:GWP}
Let $N\in \N$ and $\v_{N}$ solve \eqref{veqnN}. Then, for $0<\al<s \wedge \frac{1}{3}$, we have 
\begin{align*}
\|\v_N(t)\|_{\H^1}^{2} &\les \eta(t; \|\u_0\|_{\X}) \bigg(1+\|P_{\leq N}\u_0\|_{\X}^{13}+\|\pi_1 P_{\leq N}\stick_{t}\|_{L^6}^{6} \\
& \hphantom{XX}+ \int_0^{t} e^{-c(t-t')} \big(\| P_{\leq N}\stick_{t'}\|_{\W^{\al,\frac{4}{\al}}}^{\frac{8}{\al}}+\| \wick{(P_{\leq N}\stick_{t'})^{2}}_{\g}\|_{L^4}^{7}+\| \wick{(P_{\leq N}\stick_{t'})^{3}}_{\g}\|_{L^2}^{2}\big) dt' \bigg),
\end{align*}
which is finite almost surely, where for some $c > 0$,
\begin{align*}
\eta(t;\|\u_0\|_{\X}):=e^{32C\|\u_0\|_{\X}}\ind_{\{t\leq \tau\}}+(e^{32C\|\u_0\|_{\X} - ct} + 1)\ind_{\{t>\tau\}},
\end{align*}
and $\tau=\tau(\|\u_0\|_{\X}):=8\log \big( 20C\|\u_0\|_{\X}\vee 10   \big)$.
 Furthermore, denoting $\v$ the solution to \eqref{veqn}, then $\v$ exists on $[0,\infty)$, and satisfies 
\begin{align*}
\begin{split}
\|\v(t)\|_{\H^1}^{2} &\les  \eta(t; \|\u_0\|_{\X})\bigg( 1+\|\u_0\|_{\X}^{13}+\|\pi_1 \stick_{t}(\xi)\|_{L^6}^{6} \\
& \hphantom{XX}+ \int_0^{t} e^{-c(t-t')}\big(\| \stick_{t'}(\xi)\|_{\W^{\al,\frac{4}{\al}}}^{\frac{8}{\al}}+\| \wick{\stick_{t'}(\xi)^{2}}_{\g}\|_{L^4}^{7}+\| \wick{\stick_{t'}(\xi)^{3}}_{\g}\|_{L^2}^{2}\big) dt' \bigg),
\end{split}
\end{align*}
and for every $T<\infty$, we have 
\begin{align*}
\lim_{N\to \infty} \|\v-\v_N\|_{C([0,T];\H^1)}=0 \,\,\textup{a.s.}
\end{align*}
In particular, for every $p < \infty$, there exists $C_p > 0$ such that 
\begin{equation}\label{vmoments}
\big(\E \big[\|\v(t)\|_{\H^1}^p\big]\big)^\frac 1 p \le \eta(t; \|\u_0\|_{\X})^\frac 12(C_p + \| \u_0\|_{\X}^{7}).
\end{equation}
\end{proposition}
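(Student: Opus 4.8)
The plan is to derive a differential inequality for the energy functional $E(\v_N)$ along the flow of the truncated equation \eqref{veqnN}, and then integrate it. First I would differentiate $E(\v_N(t))$ in time. Using the equation \eqref{veqnN}, the quadratic and $v^4$ terms combine with the damping so that the ``good'' part of $\frac{d}{dt}E(\v_N)$ is $-c E(\v_N)$ for some $c>0$ (this uses the extra $\frac18\int(v+\dt v)^2$ term in $E$, whose role is precisely to produce coercivity of the damping on the full $\H^1$-norm, as in \cite{BTglobal}), while the error terms come from the interaction of $\v_N$ with the explicit data $S(t)\u_0$ and the stochastic convolution $\stick_t$. Concretely, expanding $\NN_\g[\pi_1 P_{\le N}(S(t')\u_0 + \stick_{t'} + \v_N)]$ as in \eqref{QN}--\eqref{cN}, the forcing is $v_N^3$ plus $Q_N(v_N) = a_N v_N^2 + b_N v_N + c_N$, and pairing $-\int S(t-t')(\dots)$ against $\dt v_N + \tfrac14(v+\dt v)$ in the energy identity produces terms of the form $\int a_N v_N^2 (\dt v_N + \cdots)$, $\int b_N v_N(\dt v_N+\cdots)$, $\int c_N(\dt v_N + \cdots)$. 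Crucially, there is a cancellation: the term $\int a_N v_N^2 \dt v_N = \frac13\frac{d}{dt}\int a_N v_N^3 - \frac13\int (\dt a_N) v_N^3$, so one absorbs the worst cubic-in-$v_N$ term into a modified energy $\wt E(\v_N) := E(\v_N) - \frac13\int a_N v_N^3$ (this is the integration-by-parts trick of \cite{BOP3} referenced in the text). This is where the lack of $H^1 \embeds L^\infty$ bites: one cannot simply bound $\|a_N v_N^2\|_{L^2}$ by $\|a_N\|_{L^\infty}\|v_N\|_{L^4}^2$, so instead I would use $\|a_N\|_{L^6}\|v_N\|_{L^6}^2$ via Sobolev $\H^1 \embeds L^6$ together with $\|v_N\|_{L^6} \lesssim \|v_N\|_{\H^1}$, and similarly $b_N \in L^3$, $c_N \in L^2$, exactly matching the integrability recorded in \eqref{aest}--\eqref{cest}. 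After Young's inequality (splitting off a small multiple of $E(\v_N)$ and of $\|v_N\|_{L^6}^6$, which is controlled by $\int v_N^4$ times lower order, hence by $E$), one arrives at
\begin{align*}
\frac{d}{dt}\wt E(\v_N(t)) \le -c\,\wt E(\v_N(t)) + C\big( \|\u_0\|_{\X}^{m_1} + \|P_{\le N}\stick_{t}\|_{\W^{\al,4/\al}}^{8/\al} + \|\wick{(P_{\le N}\stick_t)^2}_\g\|_{L^4}^{7} + \|\wick{(P_{\le N}\stick_t)^3}_\g\|_{L^2}^{2}\big),
\end{align*}
for suitable exponents, valid once $t$ is large enough that $S(t)\u_0$ is small in $\X$ (this is the source of the threshold $\tau$ and the function $\eta$: for $t\le \tau$ one has no smallness and just bounds things exponentially using $\|\u_0\|_\X$, while for $t>\tau$ the coercivity kicks in).

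Next I would integrate this differential inequality via Gr\"onwall/Duhamel to obtain
\begin{align*}
\wt E(\v_N(t)) \lesssim e^{-ct}\wt E(\v_N(0)) + \int_0^t e^{-c(t-t')}\big(\text{forcing}(t')\big)dt',
\end{align*}
with $\v_N(0)=\0$ so $\wt E(\v_N(0))=0$, and then convert back: $\|\v_N(t)\|_{\H^1}^2 \lesssim E(\v_N(t)) \lesssim \wt E(\v_N(t)) + \frac13\int a_N v_N^3$, absorbing the last term again by Young (here one uses $\|a_N\|_{L^6} \lesssim \|\u_0\|_\X + \|\stick_t^N\|_{\W^{\al,6}}$ from \eqref{aest}, contributing the $\|P_{\le N}\u_0\|_\X^{13}$ and $\|\pi_1 P_{\le N}\stick_t\|_{L^6}^6$ terms). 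The finiteness a.s.\ of the right-hand side follows from Proposition \ref{PROP:REGSTICK}/\ref{PROP:stickZ} since $\al < s$ guarantees $\stick_t \in \W^{\al,p}$ for all finite $p$ and the renormalised powers lie in $L^4, L^2$ respectively (these are standard estimates on Wick powers of the Gaussian $\stick_t$). The bound is uniform in $N$ by boundedness of $P_{\le N}$ on every $L^p$, $1<p<\infty$.

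Finally, to pass to the untruncated equation, I would invoke Lemma \ref{LEM:APPROXV}: the uniform-in-$N$ bound $\sup_{N}\sup_{[0,T]}\|\v_N(t)\|_{\H^1} \le K(T) < \infty$ (which the above provides, for each $T$, since the stochastic norms are a.s.\ finite on $[0,T]$) implies $T^\ast \ge T$ for the solution $\v$ to \eqref{veqnd}, hence $\v$ is global, that $\v$ satisfies the same bound (passing to the limit in the inequality, using $\v_N \to \v$ in $C([0,T];\H^1)$ and that the forcing terms for $\v_N$ converge to those for $\v$ by dominated convergence and continuity of the Wick powers), and $\lim_{N\to\infty}\|\v-\v_N\|_{C([0,T];\H^1)} = 0$. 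The moment bound \eqref{vmoments} then follows by taking $L^p(\O)$ norms of $\|\v(t)\|_{\H^1}^2 \lesssim \eta(t;\|\u_0\|_\X)(\cdots)$, using Minkowski's integral inequality on the $dt'$ integral, the hypercontractivity bounds $\E[\|\stick_{t'}\|_{\W^{\al,4/\al}}^{q}]^{1/q} \lesssim 1$ and $\E[\|\wick{\stick_{t'}^j}_\g\|_{L^p}^q]^{1/q}\lesssim 1$ uniformly in $t'$ from Proposition \ref{PROP:stickZ} (Gaussianity lets one trade any finite power for an $L^2(\O)$-type bound), and $\int_0^t e^{-c(t-t')}dt' \le c^{-1}$; the exponent $7$ on $\|\u_0\|_\X$ in \eqref{vmoments} comes from taking the square root of the dominant $\|\u_0\|_\X^{13}$-type term together with the $\eta^{1/2}$ prefactor.

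The main obstacle, as flagged in the text, is the absence of $\H^1 \embeds L^\infty$: every estimate on the nonlinear interaction terms must be done with finite-integrability norms ($L^6$, $L^4$, $L^3$, $L^2$) on the stochastic objects, forcing one to carefully track which powers of $\|\v_N\|_{\H^1}$ appear and to ensure — via the $\int v_N^4$ term in $E$ and Young's inequality — that every occurrence of $\|v_N\|_{L^6}^6$ or higher can be absorbed into the energy with room to spare, rather than destabilising the differential inequality. The secondary technical point is getting the dissipative constant $c>0$ in $\frac{d}{dt}\wt E \le -c\wt E + \text{forcing}$ honestly, which is what necessitates the auxiliary term $\frac18\int(v+\dt v)^2$ in $E$ and the phase-split at time $\tau$ encoded in $\eta$.
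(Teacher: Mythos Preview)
Your plan matches the paper's: reduce to an energy estimate (Proposition~\ref{PROP:EE}) via a modified energy and the integration-by-parts trick for $\int a_N w_N^2\,\dt w_N$, then transfer to $\v$ by Lemma~\ref{LEM:APPROXV} and take moments using the Gaussian bounds on $\stick_t$. Two small bookkeeping corrections: the paper's modified energy is $F=E-\tfrac18\int(\dt w_N)^2+\tfrac13\int a_N w_N^3$ (note the sign on the cubic correction, and the extra $-\tfrac18\int(\dt w_N)^2$, which absorbs the total derivative $\tfrac18\,\dt\!\int(\dt w_N)^2$ produced when differentiating $\tfrac18\int(w_N+\dt w_N)^2$).

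There is one genuine technical point where your sketch would not close as written. Your proposal to estimate the interaction terms via $\|v_N\|_{L^6}\lesssim\|v_N\|_{H^1}\lesssim E^{1/2}$ yields, for instance, $\|b_N\|_{L^3}\|v_N\|_{L^6}\|\dt v_N\|_{L^2}\lesssim\|b_N\|_{L^3}\,E$, which is \emph{linear} in $E$ with a coefficient that is not small; and $\|v_N\|_{L^6}^6$ is certainly not controlled by $E$ (the energy only carries $\int v_N^4$). The paper avoids this in two ways. For the $b_N$ term it interpolates $\|w_N\|_{L^6}\le\|w_N\|_{L^4}^{1-\ta}\|w_N\|_{L^q}^{\ta}\lesssim E^{(1-\ta)/4+\ta/2}$ with $q\gg1$, obtaining a strictly sub-linear power $E^{5/6+}$ (Lemma~\ref{LEM:E5}). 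For the residual term $\int(\dt a_N)w_N^3$ left over from the integration-by-parts trick, it exploits the \emph{fractional} regularity of the stochastic convolution: placing $\dt a_N$ in $W^{\al-1,4/\al}$ and pairing by duality against $\|w_N^3\|_{W^{1-\al,(1-\al/4)^{-1}}}\lesssim E^{1-\al/4}$, the latter coming from Gagliardo--Nirenberg between $\|w_N^3\|_{W^{1,1}}\lesssim E$ and $\|w_N^3\|_{L^{4/3}}\lesssim E^{3/4}$. This is exactly where the norm $\|\stick_t\|_{\W^{\al,4/\al}}$ in the forcing enters, and why the resulting power of $E$ is strictly less than $1$ so that Young's inequality genuinely absorbs it.
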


 We define the energy 
\begin{align}
\begin{split}
E(\v_N):= &\frac{1}{2}\int (\dt P_{\leq N}v_N)^{2} +\frac{1}{2}\int (P_{\leq N}v_N)^2 +\frac{1}{2}\int |\nb P_{\leq N}v_N|^{2} \\
& +\frac{1}{4}\int (P_{\leq N}v_N)^4 +\frac{1}{8}\int (P_{\leq N}v_N+\dt P_{\leq N}v_N)^{2}.
\end{split}
\label{energy}
\end{align}
The brunt of the work will be to obtain the following energy estimate.

\begin{proposition}\label{PROP:EE}
Let $0<\al <s\wedge \frac{1}{3}$. Then, there exists $c>0$ such that for $\v_N$ a solution to \eqref{veqnN}, we have 
\begin{align*}
&~E(\v_N(t))\\
&\les \eta(t; \|\u_0\|_{\X}) \bigg(1+\|P_{\leq N}\u_0\|_{\X}^{13}+\|\pi_1 \stick_{t}^{N}(\xi)\|_{L^6}^{6} \\
& \phantom{XXXXXXXXX}+ \int_0^{t} e^{-c(t-t')}\big(\| \stick^{N}_{t}(\xi)\|_{\W^{\al,\frac{4}{\al}}}^{\frac{8}{\al}}+\| \wick{(\stick_{t}^{N}(\xi))^{2}}_{\g}\|_{L^4}^{7}+\| \wick{(\stick_{t}^{N}(\xi))^{3}}_{\g}\|_{L^2}^{2}\big) dt'\bigg).
\end{align*}
\end{proposition}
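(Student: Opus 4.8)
The plan is to run an energy estimate in the spirit of \cite{BTglobal} and \cite{BOP3}, adapted to the low-regularity setting as in \cite{t18erg}. Writing $w_N := P_{\le N} v_N$ (note that $\v_N$ is already frequency-localised, so $\pi_1 P_{\le N}\v_N = w_N$), the first component of \eqref{veqnN} is the forced damped wave equation
\begin{equation*}
\partial_t^2 w_N + \partial_t w_N + (1-\Delta) w_N + w_N^3 = -P_{\le N}\big[ a_N w_N^2 + b_N w_N + c_N\big],
\end{equation*}
where $a_N,b_N,c_N$ are the polynomials in $u_{0,N}=\pi_1 P_{\le N}S(t)\u_0$ and $\stick_t^N=\pi_1 P_{\le N}\stick_t$ from \eqref{aN}--\eqref{cN}; in particular $a_N=3\Psi_N$ and $b_N=3(\Psi_N^2-\g)$ with $\Psi_N:=u_{0,N}+\stick_t^N$. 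First I would differentiate $E(\v_N(t))$. The role of $\tfrac14\int w_N^4$ is that the term $\int w_N^3\partial_t w_N$ it produces cancels exactly the contribution of the genuine cubic on the right-hand side; the role of $\tfrac18\int (w_N+\partial_t w_N)^2$ is that, combined with the damping, it produces the coercive contributions $-\|\partial_t w_N\|_{L^2}^2 - \tfrac14\|w_N\|_{\H^1}^2 - \tfrac14\|w_N\|_{L^4}^4$ and the favourable-sign term $-\tfrac34\int \Psi_N^2 w_N^2\le 0$ (up to a harmless $|\g|\,\|w_N\|_{L^2}^2$), at the price of several total-time-derivative terms $\tfrac{d}{dt}\|w_N\|_{L^2}^2$, $\tfrac{d}{dt}\|\nabla w_N\|_{L^2}^2$, $\tfrac{d}{dt}\|w_N\|_{L^4}^4$. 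Absorbing these into a modified energy $\mathcal E_0(t)\sim E(\v_N(t))$ leads to an inequality of the form
\begin{equation*}
\frac{d}{dt}\mathcal E_0 \le -c\,\mathcal E_0 - \tfrac34\!\int \Psi_N^2 w_N^2 - \!\int\!\Big( \tfrac14 w_N + \tfrac54\partial_t w_N\Big)\big( a_N w_N^2 + b_N w_N + c_N\big) + C|\g|\,\|w_N\|_{L^2}^2.
\end{equation*}

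The bulk of the work is estimating the ``error'' terms. I would split each occurrence of $u_{0,N}$ and $\stick_t^N$ into its deterministic and stochastic parts. The deterministic contributions all carry the factor $\|u_{0,N}\|_{\W^{\al,2/\al}}\les \|P_{\le N}S(t)\u_0\|_{\W^{\al,2/\al}}\les e^{-t/8}\|P_{\le N}\u_0\|_{\X}$, so after Young's inequality they contribute $Ce^{-t/8}P(\|P_{\le N}\u_0\|_{\X})(\mathcal E_0+1)$ for a polynomial $P$. The one borderline deterministic term, $\int u_{0,N} w_N^2\partial_t w_N = \tfrac13\tfrac{d}{dt}\!\int u_{0,N}w_N^3 - \tfrac13\!\int (\partial_t u_{0,N}) w_N^3$, is handled by the time-integration-by-parts trick of \cite{BOP3}: the first term is absorbed into $\mathcal E_0$ after one more Young's inequality against $\tfrac14\|w_N\|_{L^4}^4$, and the correction is controlled using $\|\partial_t S(t)\u_0\|_{\W^{\al-1,2/\al}}\les \|S(t)\u_0\|_{\W^{\al,2/\al}}\les e^{-t/8}\|\u_0\|_{\X}$ together with the fractional Leibniz inequality (Lemma \ref{LEM:fraclieb}), distributing the regularity of $w_N^3$ over one $\H^1$ and two $L^4$ factors, which keeps the bound \emph{linear} in $\mathcal E_0$ with a coefficient $\les e^{-t/8}\|\u_0\|_{\X}$. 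For the stochastic contributions the structural facts I would use are: (i) the sharp Fourier projection $P_{\le N}$ is bounded on $L^p(\T^2)$ and on $\W^{\al,p}$, uniformly in $N$, for $1<p<\infty$, so every norm of $\stick_t^N$ that appears is controlled by the corresponding norm of $\stick_t$; (ii) although $w_N\in \H^1(\T^2)$ just fails to embed into $L^\infty$, the stochastic convolution does, via $\W^{\al,4/\al}\embeds L^\infty$ (its Sobolev number $\al\cdot\tfrac4\al=4$ exceeds the dimension $2$); (iii) the favourable-sign terms $-\tfrac14\|w_N\|_{L^4}^4$ and $-\tfrac34\int (\stick_t^N)^2 w_N^2$, together with $\|w_N\|_{L^2}\les \|w_N\|_{L^4}$ on the bounded domain, can absorb the parts of the error terms proportional to $\|w_N\|_{L^4}^4$ or $\int(\stick_t^N)^2 w_N^2$, via Young's inequality with \emph{deterministic} small constants. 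Going through the terms with this toolkit, every error term gets bounded by a small deterministic multiple of the coercive quantities $\|\partial_t w_N\|_{L^2}^2+\|w_N\|_{\H^1}^2+\|w_N\|_{L^4}^4+\int \Psi_N^2 w_N^2$ plus a forcing term $h(t)$ equal to a polynomial in $\|\stick_t^N\|_{\W^{\al,4/\al}}$, $\|\wick{(\stick_t^N)^2}_{\g}\|_{L^4}$, $\|\wick{(\stick_t^N)^3}_{\g}\|_{L^2}$ and $\|\pi_1\stick_t^N\|_{L^6}$; it is exactly the optimisation of exponents in these Young's inequalities --- arranged so that no \emph{non-decaying} stochastic quantity is left multiplying the energy --- that forces the large powers $\tfrac8\al$, $7$, $2$ in the time integral and $6$, $13$ in the remaining terms of the statement (the latter two coming, respectively, from boundary terms of the integrations by parts in time and from $\mathcal E_0(0)$).

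Putting all the estimates together, one reaches
\begin{equation*}
\frac{d}{dt}\mathcal E_0(t) \le \Big( -c + Ce^{-t/8}P(\|P_{\le N}\u_0\|_{\X}) \Big)\mathcal E_0(t) + Ce^{-t/8}P(\|P_{\le N}\u_0\|_{\X}) + h(t),
\end{equation*}
and I would finish by Gronwall's inequality. Since $\int_0^\infty Ce^{-s/8}P(\|\u_0\|_{\X})\,ds\le 8CP(\|\u_0\|_{\X})$, which is $\le 32C\|\u_0\|_{\X}$ after enlarging $C$, and since for $t\ge \tau=8\log(20C\|\u_0\|_{\X}\vee 10)$ one has $-c+Ce^{-t/8}P\le -\tfrac c2$, the Gronwall bound produces precisely the prefactor $\eta(t;\|\u_0\|_{\X})$, the weight $e^{-c(t-t')}$ inside the time integral of $h$, and the terms $\|P_{\le N}\u_0\|_{\X}^{13}$ and $\|\pi_1\stick_t^N\|_{L^6}^6$. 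Since $\mathcal E_0\sim E(\v_N)$, this is the desired estimate.

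The hard part is the control of the error terms of the form $\int \partial_t w_N\cdot z\cdot w_N^k$ with $k\in\{1,2\}$ and $z$ one of the rough ingredients $u_{0,N}$, $\stick_t^N$, $(\stick_t^N)^2$. One cannot estimate these by placing $z$ in $L^\infty$ and $w_N$ in $\H^1$, since $\H^1(\T^2)\not\embeds L^\infty$ and $S(t)\u_0$ is only in $\W^{\al,2/\al}$, and a naive Young's inequality leaves a non-decaying random quantity multiplying $\mathcal E_0$, which would wreck the uniform-in-time control behind \eqref{vmoments}. Threading the needle requires using simultaneously the exponential decay of the $S(t)\u_0$-pieces, the time-integration-by-parts trick for those pieces, the embedding $\W^{\al,4/\al}\embeds L^\infty$ available only for the smoother stochastic convolution, and both favourable-sign terms in the energy; this delicate balance is the substance of Proposition \ref{PROP:EE} and the source of the large exponents in its statement.
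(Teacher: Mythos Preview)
Your overall architecture matches the paper's: modified energy built around $E$, the integration-by-parts trick on the cubic-in-$w_N$ source term, a differential inequality with coefficient $-c+Ce^{-t/8}\|\u_0\|_{\X}$, and Gronwall split at $\tau$. The gap is in your treatment of the stochastic half of the worst term. You apply the time-IBP only to $\int u_{0,N}\,w_N^2\,\partial_t w_N$, and propose to handle $\int \stick_t^N w_N^2\,\partial_t w_N$ via the embedding $\W^{\al,4/\al}\hookrightarrow L^\infty$ together with the sign term $-\tfrac34\int(\stick_t^N)^2 w_N^2$. Neither closes. Placing $\stick_t^N$ in $L^\infty$ gives $\|\stick_t^N\|_{L^\infty}\|w_N\|_{L^4}^2\|\partial_t w_N\|_{L^2}\lesssim \|\stick_t^N\|_{\W^{\al,4/\al}}\,E(\w_N)$, i.e.\ a \emph{linear} term in $E$ with a non-decaying random coefficient---exactly the scenario you yourself flag as fatal. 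And if you instead Cauchy--Schwarz against the favourable term, the complementary factor is $\int w_N^2(\partial_t w_N)^2$, which in two dimensions is not controlled by $E$ (you would need $w_N\in L^\infty$ or $\partial_t w_N\in L^4$, neither of which you have).

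The paper resolves this by applying the IBP trick to the \emph{full} $a_N=3(u_{0,N}+\stick_t^N)$. The boundary correction $\tfrac13\int a_N w_N^3$ is absorbed into the modified energy $F=E-\tfrac18\int(\partial_t w_N)^2+\tfrac13\int a_N w_N^3$, with $|F-E|\lesssim E^{5/6}\|a_N\|_{L^6}$ producing the $\|\pi_1\stick_t^N\|_{L^6}^6$ term. For the new source term one does \emph{not} use $L^\infty$: since $\partial_t\stick_t^N=\pi_2 P_{\le N}\stick_t\in W^{\al-1,4/\al}$, duality and Gagliardo--Nirenberg give
\[
\Big|\int(\partial_t\stick_t^N)\,w_N^3\Big|\lesssim \|\stick_t^N\|_{\W^{\al,4/\al}}\,\|w_N^3\|_{W^{1-\al,(1-\al/4)^{-1}}}\lesssim \|\stick_t^N\|_{\W^{\al,4/\al}}\,E(\w_N)^{1-\al/4},
\]
which is \emph{sublinear} in $E$; Young's inequality with exponent $4/\al$ then yields $\epsilon E+C\|\stick_t^N\|_{\W^{\al,4/\al}}^{4/\al}$. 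This is precisely where the power $4/\al$ (hence the statement's $8/\al$) originates, and why the paper never invokes either $\W^{\al,4/\al}\hookrightarrow L^\infty$ or the sign of $\int\Psi_N^2 w_N^2$. With this one correction your sketch goes through.
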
 

In view of \eqref{energy} and Lemma~\ref{LEM:APPROXV}, Proposition~\ref{PROP:GWP} now follows from Proposition~\ref{PROP:EE}.

\noi
Let $\w_N :=P_{\leq N}\v_N$ and note $E(\v_N)=E(\w_N)$. Furthermore, $\w_N$ satisfies 
\begin{align}
\w_{N}(t) =S(t)P_{\leq N}\v_0 - \int_0^t S(t-t')P_{\leq N} \vec{0}{w_{N}^3 + Q_{N}(w_N)} dt',
\label{weqngwp}
\end{align}
where $w_N := \pi_1 \w_N$ and $Q_N$ is the quadratic polynomial in \eqref{QN}, with coefficients given by \eqref{aN}, \eqref{bN}, \eqref{cN}.
Clearly, $\w_N$ is smooth in space and time and hence $E(\w_N)$ is differentiable. Then, using \eqref{weqngwp}, we compute 
\begin{align}
\dt E(&\w_N) = \int (\dt^{2}w_N+w_N-\Delta w_N+w_N^{3})\dt w_N +\frac{1}{4}\int (\dt^2 w_N+\dt w_N)(w_N+\dt w_N) \notag \\
 &= -\int (\dt w_N)^2-\int (\dt w_N)P_{\leq N} Q_N(w_N)+\int P_{>N}(w_N^3)\dt w_N +\frac{1}{8}\dt \bigg( \int (\dt w_N)^2  \bigg) \notag \\ 
 &\hphantom{X}+\frac{1}{4}\int (-w_N+\Delta w_N-P_{\leq N}(w_N^3)-P_{\leq N}Q_N(w_N))w_N +\frac{1}{4}\int (\dt w_N)^2  \notag\\
 & = -\frac{1}{4}\bigg( 3\int (\dt w_N)^2+\int w_N^2 +|\nb w_N|^2+w_N^4 \bigg)  \label{e1}  \\
 &\hphantom{X}+\frac{1}{8}\dt \bigg( \int (\dt w_N)^2  \bigg) \label{e2} \\
 &\hphantom{X}+\int P_{>N}(w_N^3)\bigg(\frac{1}{4}w_N+\dt w_N\bigg) \label{e3} \\
 & \hphantom{X}-\frac{1}{4}\int w_N P_{\leq N}Q_N(w_N) \label{e4}\\
 & \hphantom{X} -\int \dt w_N P_{\leq N}Q_N(w_N). \label{e5}
\end{align}

\noi
Since $P_{\leq N}\w_N=\w_N$, we may remove the projection $P_{\leq N}$ in \eqref{e4} and \eqref{e5}.
We consider a few of the terms above separately. 

\begin{lemma}\label{LEM:E1}
If $\w_N$ solves \eqref{weqngwp}, then 
\begin{align*}
\eqref{e3}=0.
\end{align*}
\end{lemma}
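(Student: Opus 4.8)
The statement is a pure orthogonality fact coming from the sharp frequency truncation. First I would observe that, since $S(t)$ commutes with the Fourier projections and every term on the right-hand side of \eqref{weqngwp} carries a $P_{\leq N}$ (including $S(t)P_{\leq N}\v_0$), the function $\w_N(t)$ satisfies $P_{\leq N}\w_N(t) = \w_N(t)$ for all $t$; that is, $\widehat{\w_N(t)}(n) = 0$ whenever $\max_j |n_j| > N$. Consequently both $w_N = \pi_1 \w_N$ and its time derivative $\dt w_N$ (which is again frequency-localised, being the time derivative of a function whose Fourier support stays inside $[-N,N]^2$) have Fourier coefficients supported in $\{n \in \Z^2 : \max_j |n_j| \le N\}$. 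In particular $w_N$ is a trigonometric polynomial in $x$, so $w_N^3$ is smooth and lies in $L^2(\T^2)$.

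Next, by definition of $P_{>N} = \mathrm{Id} - P_{\leq N}$, the function $P_{>N}(w_N^3)$ has Fourier coefficients supported in the complementary set $\{n \in \Z^2 : \max_j |n_j| > N\}$. Therefore $P_{>N}(w_N^3)$ and $\tfrac14 w_N + \dt w_N$ have disjoint Fourier supports. By Parseval's identity (applicable since all functions involved are in $L^2(\T^2)$),
\begin{align*}
\int_{\T^2} P_{>N}(w_N^3)\Big(\tfrac14 w_N + \dt w_N\Big)\, dx
= \sum_{n \in \Z^2} \widehat{P_{>N}(w_N^3)}(n)\, \overline{\widehat{\big(\tfrac14 w_N + \dt w_N\big)}(n)} = 0,
\end{align*}
since for each $n$ at least one of the two factors vanishes. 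This is exactly $\eqref{e3} = 0$.

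There is essentially no obstacle here: the only point requiring a line of justification is that $\w_N(t)$ genuinely remains frequency-localised for all $t$ (so that the orthogonality applies not only to $w_N$ but also to $\dt w_N$), and this is immediate from the fixed-point equation \eqref{weqngwp} together with the fact that $S(t)$ is a Fourier multiplier. Everything else is the standard observation that a sharp high-frequency projection of a function is $L^2$-orthogonal to any function supported on low frequencies.
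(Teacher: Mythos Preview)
Your proof is correct and is essentially the same as the paper's: both rest on the fact that $\w_N$ (hence $w_N$ and $\dt w_N$) is supported on frequencies $\max_j|n_j|\le N$, so that $P_{>N}(w_N^3)$ is $L^2$-orthogonal to $\tfrac14 w_N+\dt w_N$. The paper phrases this more tersely by moving $P_{>N}$ across the inner product via self-adjointness and using $P_{>N}\w_N=0$ (which is immediate from the definition $\w_N:=P_{\le N}\v_N$ rather than from the equation \eqref{weqngwp}), but this is the same argument you give via Parseval.
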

\begin{proof}
By definition, $P_{>N}\w_N=0$. Hence, 
\begin{align*}
\eqref{e3}=\int w_N^3 P_{>N} \bigg(\frac{1}{4}w_N+\dt w_N\bigg)=0.
\end{align*}
\end{proof}

\begin{lemma}\label{LEM:E2}
For any $0<\al<s\wedge \frac 13$, we have 
\begin{align*}
\bigg\vert \int w_N Q_N(w_N) \bigg\vert \les &  ( \|P_{\leq N}\u_0\|_{X^{\al}}+\|\stick_{t}^{N}(\xi)\|_{\W^{\al,4}})E(\w_N)^{\frac 34}\\
& \hphantom{X}+ (\|P_{\leq N}\u_0\|_{\X}^{2}+\|\stick_{t}^{N}(\xi)\|_{\W^{\al,6}}^{2}+\|\wick{(\stick_{t}^{N}(\xi))^{2}}_{\g}\|_{L^2})E^{\frac 12}(\w_N) \\
& \hphantom{X}+(\|P_{\leq N}\u_0\|_{\X}^{3}+\|\stick_{t}^{N}(\xi)\|_{\W^{\al,6}}^{3}+\|\wick{(\stick_{t}^{N}(\xi))^{2}}_{\g} \|_{L^4}^{\frac{3}{2}} \\
& \hphantom{X}+  \|\wick{(\stick_{t}^{N}(\xi))^{3}}_{\g}\|_{L^{2}}) E(\w_N)^{\frac 12}.
\end{align*}
\end{lemma}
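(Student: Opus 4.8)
The plan is to expand $Q_N$ via \eqref{QN}, writing
\[
\int w_N Q_N(w_N)=\int a_N w_N^3+\int b_N w_N^2+\int c_N w_N,
\]
and to control each of the three integrals by H\"older's inequality. The facts we need about $w_N$ come directly from the definition \eqref{energy} of the energy, which dominates $\|w_N\|_{L^2}^2$, $\|\nb w_N\|_{L^2}^2$ and $\|w_N\|_{L^4}^4$; hence $\|w_N\|_{L^4}\les E(\w_N)^{\frac14}$, $\|w_N\|_{L^2}\les E(\w_N)^{\frac12}$, and, by the Sobolev embedding $H^1(\T^2)\embeds L^6(\T^2)$, also $\|w_N\|_{L^6}\les\|w_N\|_{H^1}\les E(\w_N)^{\frac12}$. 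With these in hand,
\begin{align*}
\Big|\int a_N w_N^3\Big|&\le \|a_N\|_{L^4}\|w_N\|_{L^4}^3\les \|a_N\|_{L^4}\,E(\w_N)^{\frac34},\\
\Big|\int b_N w_N^2\Big|&\le \|b_N\|_{L^2}\|w_N\|_{L^4}^2\les \|b_N\|_{L^2}\,E(\w_N)^{\frac12},\\
\Big|\int c_N w_N\Big|&\le \|c_N\|_{L^2}\|w_N\|_{L^2}\les \|c_N\|_{L^2}\,E(\w_N)^{\frac12},
\end{align*}
so the lemma reduces to bounding $\|a_N\|_{L^4}$, $\|b_N\|_{L^2}$ and $\|c_N\|_{L^2}$.

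For these I would use the explicit formulas \eqref{aN}--\eqref{cN} together with two observations. First, since $\al<\frac13$ we have $\frac2\al>6$, so $L^{\frac2\al}(\T^2)\embeds L^q(\T^2)$ for $q\in\{4,6\}$; combined with the fact that $S(t)$ and $P_{\leq N}$ commute and with the very definition \eqref{Xalpha} of the $\X$-norm, this yields
\[
\|u_{0,N}\|_{L^q}=\|\pi_1 S(t)P_{\leq N}\u_0\|_{L^q}\les\|S(t)P_{\leq N}\u_0\|_{\Wal}\le e^{-\frac t8}\|P_{\leq N}\u_0\|_{\X}\le\|P_{\leq N}\u_0\|_{\X}
\]
for $q=4,6$; similarly $\|\pi_1\stick_t^N(\xi)\|_{L^q}\les\|\stick_t^N(\xi)\|_{\W^{\al,q}}$ by Sobolev embedding, and $\|\stick_t^N(\xi)\|_{\W^{\al,4}}\les\|\stick_t^N(\xi)\|_{\W^{\al,6}}$ since $L^6\embeds L^4$ on the torus. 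Now expanding $a_N$ and using the triangle inequality produces the first line of the claim; expanding $b_N$, bounding each product by H\"older (e.g. $\|u_{0,N}\,\pi_1\stick_t^N\|_{L^2}\le\|u_{0,N}\|_{L^4}\|\pi_1\stick_t^N\|_{L^4}$), and then applying Young's inequality to split mixed terms into pure powers, produces the second line; and expanding $c_N$ in the same way (now using $\|u_{0,N}^2\,\pi_1\stick_t^N\|_{L^2}\le\|u_{0,N}\|_{L^6}^2\|\pi_1\stick_t^N\|_{L^6}$ and $\|u_{0,N}\,\wick{(\stick_t^N)^2}_\g\|_{L^2}\le\|u_{0,N}\|_{L^4}\|\wick{(\stick_t^N)^2}_\g\|_{L^4}$, followed by Young with exponents $(3,\tfrac32)$) produces the last two lines.

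I do not expect a genuine obstacle here: the argument is a bookkeeping exercise in H\"older's and Young's inequalities. The only point that requires care is tracking the exponents, so that the powers of $E(\w_N)$ come out as $\tfrac34,\tfrac12,\tfrac12$ and the powers of $\|P_{\leq N}\u_0\|_{\X}$ and of the stochastic objects match the three groups of terms in the statement. The hypothesis $\al<\tfrac13$ is used only to guarantee $\tfrac2\al>6$, which is exactly what lets us embed $\Wal$ into the $L^4$ and $L^6$ spaces appearing in the above estimates.
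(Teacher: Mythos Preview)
Your proof is correct and follows essentially the same approach as the paper: expand $w_N Q_N(w_N)$ into the three terms $\int a_N w_N^3$, $\int b_N w_N^2$, $\int c_N w_N$, apply H\"older's inequality to reduce to bounding $\|a_N\|_{L^4}$, $\|b_N\|_{L^2}$, $\|c_N\|_{L^2}$, and then estimate these coefficients via \eqref{aN}--\eqref{cN} using H\"older and Young. The paper presents the same argument slightly more tersely, and your remark about $\|w_N\|_{L^6}$ is not actually needed here (only $L^4$ and $L^2$ of $w_N$ enter), but this does not affect correctness.
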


\begin{proof}
From \eqref{QN} and H\"older's inequality, we have
\begin{align*}
\bigg\vert \int w_N Q_N(w_N) \bigg\vert \les \|a_N\|_{L^4}E^{\frac 34}(\w_N)+(\|b_N\|_{L^2}+\|c_N\|_{L^2})E^{\frac 12}(\w_N).
\end{align*}
By \eqref{aN}, we have
\begin{align*}
\|a_N\|_{L^4} \les \| P_{\leq N}\u_0\|_{\X} + \| \pi_1 P_{\leq N} \stick_{t}(\xi)\|_{L^{4}}.
\end{align*}
Similarly, \eqref{bN} and H\"older's inequality imply
\begin{align*}
\|b_N\|_{L^2}\les \|\pi_1 P_{\leq N}S(t)\u_0\|_{L^{4}}^{2}+\|\pi_1 \stick_{t}^{N}(\xi)\|_{L^4}^{2}+ \| \wick{(\stick_{t}^{N}(\xi))^{2}}_{\g}\|_{L^2}.
\end{align*}
Recalling the formula \eqref{cN} for $c_N$, by H\"older's and Young's inequalities, we obtain
\begin{align}
\begin{split}
\|c_N\|_{L^2} \les & \|P_{\leq N}\u_0\|_{\X}^{3}+\|\stick_{t}^{N}(\xi)\|_{\W^{\al,6}}^{3}+\|\wick{(\stick_{t}^{N}(\xi))^{2}}_{\g}\|_{L^4}^{\frac{3}{2}} \\
& + \|\wick{(\stick_{t}^{N}(\xi))^{3}}_{\g}\|_{L^{2}}.
\end{split}
\label{cNL2}
\end{align}
\end{proof}

Using \eqref{QN}, we write 
\begin{align}
\eqref{e5} = -\int a_N w_N^2 \dt w_N -\int b_N w_N \dt w_N +\int c_N \dt w_N. 
\label{intbyparts0}
\end{align}
However, we cannot directly estimate the first term. To get around this, we employ an integration by parts trick; namely, we write 
\begin{align}
-\int a_N w_N^2 \dt w_N = -\frac{1}{3} \int a_N \dt ( w_N^3) =-\frac{1}{3}\dt \bigg( \int a_N w_N^3  \bigg)
+\frac{1}{3}\int (\dt a_N) w_N^{3}. \label{intbyparts}
\end{align}

\begin{lemma}\label{LEM:E5}
For every $0<\al<s\wedge \frac 13$, we have
\begin{align*}
\bigg\vert  \int b_N w_N \dt w_N  \bigg\vert  & \les (\|P_{\leq N}\u_0\|_{\X}^{2}+\|\stick_{t}^{N}(\xi)\|_{\W^{\al,6}}^{2}+ \| \wick{(\stick_{t}^{N}(\xi))^{2}}_{\g}\|_{L^3})E^{\frac{5}{6}+}(\w_N),   \\
\bigg\vert \int c_N \dt w_N \bigg\vert & \les (\|P_{\leq N}\u_0\|_{\X}^{3}+\|\stick_{t}^{N}(\xi)\|_{\W^{\al,6}}^{3}+\|\wick{(\stick_{t}^{N}(\xi))^{2}}_{\g}\|_{L^4}^{\frac{3}{2}} \\
&\hphantom{XXX} +   \|\wick{(\stick_{t}^{N}(\xi))^{3}}_{\g}\|_{L^{2}})E(\w_N)^{\frac 12}, \\
\bigg\vert \int (\dt a_N) w_N^3 \bigg\vert & \les e^{-\frac{t}{8}}\|P_{\leq N}\u_0\|_{\X}E(\w_N)+\|\stick_{t}^{N}(\xi)\|_{\W^{\al,\frac{4}{\al}}}E^{1-\frac{\al}{4}}(\w_N),\\
\bigg\vert \int a_N w_N^3 \bigg\vert & \les (\|P_{\leq N}\u_0\|_{\X}+\|\stick_{t}^{N}(\xi)\|_{\W^{\al,6}})E^{\frac{5}{6}}(\w_N).
\end{align*}
\end{lemma}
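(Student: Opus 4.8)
The plan is to establish the four inequalities one at a time using only H\"older's inequality, Sobolev embedding on $\T^2$, the fractional Leibniz rule of Lemma~\ref{LEM:fraclieb}, and the bounds on the coefficients $a_N,b_N,c_N$ already recorded in the proof of Lemma~\ref{LEM:E2} (in particular \eqref{aest}, \eqref{best}, \eqref{cNL2}). The basic input is that the energy controls $\|w_N\|_{H^1}^2+\|\dt w_N\|_{L^2}^2+\|w_N\|_{L^4}^4\les E(\w_N)$, so that by Sobolev embedding $\|w_N\|_{L^q}\les_q E(\w_N)^{1/2}$ for every $q<\infty$. Interpolating these estimates against the integrability of the coefficients will produce every power of $E$ appearing in the statement, with the quartic term $\|w_N\|_{L^4}^4$ of the energy serving as a ``cheap'' bound to be spent whenever possible.

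Two of the bounds are immediate. For $\int c_N\dt w_N$, Cauchy--Schwarz gives $|\int c_N\dt w_N|\le\|c_N\|_{L^2}\|\dt w_N\|_{L^2}\les\|c_N\|_{L^2}E(\w_N)^{1/2}$, and $\|c_N\|_{L^2}$ is exactly the quantity bounded in \eqref{cNL2}. For $\int a_N w_N^3$, H\"older with the pair $(L^6,L^{6/5})$ gives $|\int a_N w_N^3|\le\|a_N\|_{L^6}\|w_N\|_{L^{18/5}}^3$; since $\tfrac{18}{5}\in(2,4)$, interpolating the $L^{18/5}$ norm between the $L^2$- and $L^4$-bounds gives $\|w_N\|_{L^{18/5}}\les E(\w_N)^{5/18}$, and then \eqref{aest} produces the claimed $E(\w_N)^{5/6}$. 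The term $\int b_N w_N\dt w_N$ is of the same type: keeping $\dt w_N$ in $L^2$ (or, if convenient, absorbing a small multiple of $\int(\dt w_N)^2$ into the dissipative term \eqref{e1}) and interpolating the Lebesgue norm of $w_N$ between its $L^4$- and $H^1$-bounds, H\"older against the coefficient $b_N$ (whose relevant norms are controlled by \eqref{best}) yields a power $E(\w_N)^{5/6+}$, the $+$ denoting the arbitrarily small loss coming from a borderline Lebesgue exponent.

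The substance of the lemma is the estimate for $\int(\dt a_N)w_N^3$, the only one where we cannot afford any loss in the power of $E$. Here we exploit the structure of the wave equation. Since $a_N(t)=3\pi_1 P_{\leq N}\big(S(t)\u_0+\stick_t(\xi)\big)$, and for the first-order system \eqref{NLWvec} the time derivative of the first component equals the second component --- directly for the linear evolution $S(t)\u_0$, and for $\stick_t(\xi)$ because the noise forces only the second slot --- we obtain $\dt a_N(t)=3\pi_2 P_{\leq N}\big(S(t)\u_0+\stick_t(\xi)\big)$. Thus $\dt a_N$ splits into a piece in $W^{\al-1,\frac{2}{\al}}$ with norm $\les e^{-t/8}\|P_{\leq N}\u_0\|_{\X}$ (by definition of $\X$) and a piece in $W^{\al-1,\frac{4}{\al}}$ with norm $\les\|\stick_t^N(\xi)\|_{\W^{\al,\frac{4}{\al}}}$. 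One then estimates the pairing by duality, $|\int(\dt a_N)w_N^3|\les\|\dt a_N\|_{W^{\al-1,p}}\|w_N^3\|_{W^{1-\al,p'}}$, and bounds $\|w_N^3\|_{W^{1-\al,p'}}$ by two applications of Lemma~\ref{LEM:fraclieb} (valid since $1-\al\in(0,1)$), placing the derivative $\jb{\nb}^{1-\al}$ on a single factor of $w_N$ and the other two factors into $L^4$. For the $\u_0$-piece one has $p'=\tfrac{2}{2-\al}$ and estimates $\jb{\nb}^{1-\al}w_N$ through the scaling-critical embedding $W^{1,2}(\T^2)\embeds W^{1-\al,\frac{2}{1-\al}}(\T^2)$, which gives exactly $\|w_N\|_{H^1}\|w_N\|_{L^4}^2\les E(\w_N)$ and hence the term $e^{-t/8}\|P_{\leq N}\u_0\|_{\X}E(\w_N)$. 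For the $\stick_t(\xi)$-piece, $p'=\tfrac{4}{4-\al}$ is slightly larger, and interpolating $\jb{\nb}^{1-\al}w_N$ between $W^{1,2}$ and $L^4$ (instead of using only $W^{1,2}$) yields the gain $\|w_N\|_{H^1}^{1-\al}\|w_N\|_{L^4}^{2+\al}\les E(\w_N)^{1-\al/4}$.

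I expect this last estimate to be where the work lies. The delicate point is to track the Sobolev and Lebesgue exponents precisely so as to land on the power $E(\w_N)^{1}$ --- and not $E(\w_N)^{3/2}$ --- for the $\u_0$-contribution; this forces the use of the critical embedding $W^{1,2}\embeds W^{1-\al,2/(1-\al)}$ and the distribution of two of the three factors of $w_N$ into the $L^4$ norm supplied by the quartic part of the energy. The combination of the exact power $E(\w_N)^{1}$ with the decay $e^{-t/8}$ of its coefficient is precisely what allows this term to be absorbed into the dissipative term \eqref{e1} once $t$ exceeds the threshold $\tau$, and is the reason behind the two-regime shape of $\eta(t;\|\u_0\|_{\X})$ in Proposition~\ref{PROP:GWP}. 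The restriction $\al<\tfrac{1}{3}$ enters only to keep the auxiliary exponents (for instance $\tfrac{4}{1-\al}<6$) finite and below the Sobolev thresholds. The remaining three estimates are routine once the coefficient bounds from the proof of Lemma~\ref{LEM:E2} are in hand.
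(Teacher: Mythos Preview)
Your proposal is correct and follows essentially the same route as the paper: Cauchy--Schwarz plus \eqref{cNL2} for the $c_N$ term, H\"older $(L^6,L^{18/5})$ with $L^2$--$L^4$ interpolation for $\int a_N w_N^3$, H\"older $(L^3,L^6,L^2)$ with $L^4$--$L^q$ interpolation for the $b_N$ term, and for $\int(\dt a_N)w_N^3$ the split $\dt a_N=3\pi_2 P_{\le N}(S(t)\u_0+\stick_t)$ followed by duality against $\|w_N^3\|_{W^{1-\al,p'}}$, using the critical embedding $H^1\embeds W^{1-\al,2/(1-\al)}$ for the $\u_0$-piece. The only cosmetic difference is in the $\stick$-piece: the paper interpolates $w_N^3$ directly via Gagliardo--Nirenberg between $W^{1,1}$ and $L^{4/3}$, whereas you apply fractional Leibniz first and then Gagliardo--Nirenberg on a single factor of $w_N$; the two computations are equivalent and both land on $E(\w_N)^{1-\al/4}$.
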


\begin{proof}
The second estimate follows from Cauchy-Schwarz and \eqref{cNL2}.
By interpolation $( \frac{1}{6}=\frac{1-\ta}{4}+\frac{\ta}{q}$) and Sobolev embedding, for $4\ll q<\infty$, we have
\begin{align}
\|w_N\|_{L^{6}} \leq \|w_N\|_{L^4}^{1-\ta}\|w_N\|_{L^q}^{\ta} \les E^{\frac{1-\ta}{4}+\frac{\ta}{2}}(\w_N)= E^{\frac{1}{3}+\frac{1}{3(q-4)}}(\w_N). \label{L6wN}
\end{align}
Thus, by H\"{o}lder's inequality and \eqref{L6wN},
\begin{align*}
\bigg\vert  \int b_N w_N \dt w_N  \bigg\vert & \leq \|b_N\|_{L^3}\|w_N\|_{L^6}\|\dt w_N\|_{L^2} \les \|b_N\|_{L^3}E^{\frac{5}{6}+\frac{1}{3(q-4)}}(\w_N).
\end{align*}
By choosing $q$ big enough, we obtain the first inequality.

Clearly, 
\begin{align*}
\| w_{N}^{3}\|_{W^{1,1}}& \les \|w_N\|_{H^{1}}\|w_{N}\|_{L^{4 }}^{2} \les  E(\w_{N}),
\\
\| w_{N}^{3}\|_{L^{\frac{4}{3}}}& \les E^{\frac{3}{4}}(\w_{N}),
\end{align*}
so by Gagliardo-Nirenberg
\begin{align}
\norm{w_N^3}_{W^{1-\al, \left(1-\frac{\al}{4}\right)^{-1}}} \les \norm{w_N^3}_{L^{\frac43}}^{\al} \norm{w_N^3}_{W^{1,1}}^{1-\al} \les E(\w_N)^{1-\frac \al 4}, \label{GN2}
\end{align}
Hence, duality and \eqref{GN2} give
\begin{align*}
\begin{split}
\bigg\vert \int (\dt \stick_t^N(\xi)) w_N^3 \bigg\vert & \les \|\dt \stick_t^N(\xi)\|_{W^{-1+\al,\frac{4}{\al}}}\|w_{N}^3\|_{W^{1-\al,\left(1-\frac{\al}{4}\right)^{-1}}} \\
&\les \|\stick_{t}^{N}(\xi)\|_{\W^{\al,\frac{4}{\al}}} E^{1-\frac{\al}{4}}(\w_N).
\end{split}
\end{align*}
By fractional Leibniz (Lemma \ref{fraclieb}) and Sobolev embeddings, we have
\begin{align*}
\| w_{N}^{3}\|_{W^{1-\alpha,\left(1 - \frac \al 2\right)^{-1}}}\les \|w_N\|_{W^{1-\alpha, \frac 2{1-\alpha}}}\|w_{N}\|_{L^{4 }}^{2} \les \|w_N\|_{H^{1}}\|w_{N}\|_{L^{4 }}^{2} \les  E(\w_{N}).
\end{align*}
and hence 
\begin{align*}
\bigg\vert \int \dt(\pi_1 P_{\leq N}S(t)\u_0) w_N^3  \bigg\vert & = \bigg\vert \int (\pi_2 P_{\leq N}S(t)\u_0) w_N^3  \bigg\vert  \\
& \leq \|\pi_2 P_{\leq N}S(t)\u_0\|_{W^{-1+\al, \frac 2 \al}}\|w_N^3\|_{W^{1-\al, \left(1 - \frac \al2\right)^{-1}}} \\
& \les e^{-\frac{t}{8}}\| P_{\leq N}\u_0\|_{X^{\al}}E(\w_N).
\end{align*}
Finally, by H\"{o}lder's inequality and interpolation, we have 
\begin{align*}
\bigg\vert \int a_N w_N^3 \bigg\vert & \leq \|a_N\|_{L^6}\|w_{N}\|^{3}_{L^{\frac{18}{5}}} \les \|a_N\|_{L^6}\|w_N\|_{L^2}^{\frac{1}{3}}\|w_N\|_{L^4}^{\frac{8}{3}}  \les \|a_N\|_{L^6}E^{\frac{5}{6}}(\w_N).
\end{align*}
\end{proof}

\begin{proof}[Proof of Proposition~\ref{PROP:EE}]

Define 
\begin{align*}
F(\w_N)= E(\w_N)-\frac{1}{8}\int (\dt w_N)^2 +\frac{1}{3}\int a_N w_N^3,
\end{align*}
and from \eqref{energy} and Lemma~\ref{LEM:E5}, there exists a constant $C>0$ such that 
\begin{align}
F(\w_N)& \leq \frac{5}{4}E(\w_N)+C(\|P_{\leq N}\u_0\|_{\X}+\|\pi_1 \stick_{t}^{N}(\xi)\|_{L^6})^{6}, \label{FbyE}\\
E(\w_N) & \leq 2F(\w_N)+2C(\|P_{\leq N}\u_0\|_{\X}+\|\pi_1 \stick_{t}^{N}(\xi)\|_{L^6})^{6}. \label{EbyF}
\end{align}
From \eqref{e1}, \eqref{e2}, \eqref{e3}, \eqref{e4}, \eqref{e5}, Lemma~\ref{LEM:E1}, \eqref{intbyparts0} and \eqref{intbyparts}, 
we have 
\begin{align*}
\dt F(\w_N) & = -\frac{1}{4}\bigg( 3\int (\dt w_N)^2+\int w_N^2 +|\nb w_N|^2+w_N^4 \bigg)    \\
 & \hphantom{X}-\frac{1}{4}\int w_N P_{\leq N}Q_N(w_N) \\
 & \hphantom{X} +\frac{1}{3}\int (\dt a_N)w_N^3 -\int b_N w_N \dt w_N + \int c_N \dt w_N.
\end{align*}
Thus, by Lemma~\ref{LEM:E2}, Lemma~\ref{LEM:E5}, \eqref{FbyE} and Young's inequality, we have (where the constant $C>0$ can  change from line to line)
\begin{align*}
\dt F(\w_N) & \leq -\frac{1}{2}E(\w_N)+ CE^{\frac{3}{4}}(\w_N)(\|P_{\leq N}\u_0\|_{\X}+\| \stick_{t}^{N}(\xi)\|_{\W^{\al,4}}) \\
& \hphantom{X} + CE^{\frac 12}(\w_N)(\|P_{\leq N}\u_0\|_{\X}^2+\| \stick_{t}^{N}(\xi)\|_{\W^{\al,6}}^2+\| \wick{(\stick_{t}^{N}(\xi))^{2}}_{\g}\|_{L^2}) \\
& \hphantom{X} + CE^{\frac{1}{2}}(\w_N)(\|P_{\leq N}\u_0\|_{\X}^3+\| \stick_{t}^{N}(\xi)\|_{\W^{\al,6}}^3+\| \wick{(\stick_{t}^{N}(\xi))^{2}}_{\g}\|_{L^4}^{\frac{3}{2}}+\| \wick{(\stick_{t}^{N}(\xi))^{3}}_{\g}\|_{L^2}) \\
& \hphantom{X} + CE^{1-\frac{\al}{4}}(\w_N)\| \stick_{t}^{N}(\xi)\|_{\W^{\al,\frac{4}{\al}}}+CE(\w_N)e^{-\frac{t}{8}}\|P_{\leq N}\u_0\|_{\X} \\
& \hphantom{X}+CE^{\frac{5}{6}+}(\w_N)(\|P_{\leq N}\u_0\|_{\X}^2+\| \stick_{t}^{N}(\xi)\|_{\W^{\al,6}}^2+\| \wick{(\stick_{t}^{N}(\xi))^{2}}_{\g}\|_{L^3}) \\
& \hphantom{X} + CE^{\frac{1}{2}}(\w_N)(\|P_{\leq N}\u_0\|_{\X}^3+\| \stick_{t}^{N}(\xi)\|_{\W^{\al,6}}^3+\| \wick{(\stick_{t}^{N}(\xi))^{2}}_{\g}\|_{L^4}^{\frac{3}{2}}+\| \wick{(\stick_{t}^{N}(\xi))^{3}}_{\g}\|_{L^2}) \\
& \leq -\frac{1}{2}E(\w_N)+\frac{1}{4}E(\w_N)+Ce^{-\frac{t}{8}}\|P_{\leq N}\u_0\|_{\X}E(\w_N)\\
&\hphantom{X}+C(1+\|P_{\leq N}\u_0\|_{\X}^{13}+\| \stick_{t}^{N}(\xi)\|_{\W^{\al,\frac{4}{\al}}}^{\frac{4}{\al}}+\| \wick{(\stick_{t}^{N}(\xi))^{2}}_{\g}\|_{L^4}^{7} +\| \wick{(\stick_{t}^{N}(\xi))^{3}}_{\g}\|_{L^2}^{2})  \\
& \leq -\frac{1}{5}F(\w_N)+2Ce^{-\frac{t}{8}}\| P_{\leq N}\u_0\|_{\X}F(\w_N) \\
& \hphantom{X} +C(1+2e^{-\frac{t}{8}}\| P_{\leq N}\u_0\|_{\X})(\|P_{\leq N}\u_0\|_{\X}+\|\pi_1 \stick_{t}^{N}(\xi)\|_{L^6})^{6} \\
& \hphantom{X} +C(1+\|P_{\leq N}\u_0\|_{\X}^{13}+\| \stick_{t}^{N}(\xi)\|_{\W^{\al,\frac{4}{\al}}}^{\frac{4}{\al}}+\| \wick{(\stick_{t}^{N}(\xi))^{2}}_{\g}\|_{L^4}^{7}+\| \wick{(\stick_{t}^{N}(\xi))^{3}}_{\g}\|_{L^2}^{2}) \\
& \leq -\frac{1}{5}F(\w_N) +2Ce^{-\frac{t}{8}}\| \u_0\|_{\X}F(\w_N) +C(1+\|P_{\leq N}\u_0\|_{\X}^{13}+\| \stick_{t}^{N}(\xi)\|_{\W^{\al,\frac{4}{\al}}}^{\frac{4}{\al}} \\
& \hphantom{X}+\| \wick{(\stick_{t}^{N}(\xi))^{2}}_{\g}\|_{L^4}^{7}+\| \wick{(\stick_{t}^{N}(\xi))^{3}}_{\g}\|_{L^2}^{2}).
\end{align*}
For 
\begin{align*}
t\le \tau(\|\u_0\|_{\X}):= 8\log \big( 20C\|\u_0\|_{\X}\vee 10   \big),
\end{align*}
Gronwall's inequality implies 
\begin{align*}
&\phantom{\le\ }F(\w_N (t))\\
&\leq e^{-2ct}e^{16C\|\u_0\|_{\X}}F(\w_{N}(0))+Ce^{32C\|\u_0\|_{\X}}(1+\|P_{\leq N}\u_0\|_{\X}^{13}) \\
& \hphantom{X}+ e^{32C\|\u_0\|_{\X}} \int_0^{t} e^{-2c(t-t')}(\| \stick_{t'}^{N}(\xi)\|_{\W^{\al,\frac{4}{\al}}}^{\frac{4}{\al}}+\| \wick{(\stick_{t'}^{N}(\xi))^{2}}_{\g}\|_{L^4}^{7}+\| \wick{(\stick_{t'}^{N}(\xi))^{3}}_{\g}\|_{L^2}^{2}) dt'.
\end{align*}
Hence, by \eqref{FbyE} and \eqref{EbyF}, we have 
\begin{align*}
E(\w_N(t))& \les e^{32C\|\u_0\|_{\X}}\bigg(    1+\|P_{\leq N}\u_0\|_{\X}^{13}+\|\pi_1 \stick_{t}^{N}(\xi)\|_{L^6}^{6}  \\
& \hphantom{X} +\int_0^{t} e^{-2c(t-t')}(\| \stick_{t'}^{N}(\xi)\|_{\W^{\al,\frac{4}{\al}}}^{\frac{4}{\al}}+\| \wick{(\stick_{t'}^{N}(\xi))^{2}}_{\g}\|_{L^4}^{7}+\| \wick{(\stick_{t'}^{N}(\xi))^{3}}_{\g}\|_{L^2}^{2}) dt'\bigg).
\end{align*}
For $t \ge \tau$,
we have 
\begin{align*}
\dt F(\w_N)  \leq -\frac{1}{10}F(\w_N)+C\big(&1+\|P_{\leq N}\u_0\|_{\X}^{13}+\| \stick_{t}^{N}(\xi)\|_{\W^{\al,\frac{4}{\al}}}^\frac 4 \al\\
&+\| \wick{(\stick_{t}^{N}(\xi))^{2}}_{\g}\|_{L^4}^{7}+\| \wick{(\stick_{t}^{N}(\xi))^{3}}_{\g}\|_{L^2}^{2}\big).
\end{align*}
Applying Gronwall's inequality with $c:=\frac{1}{10}$, we obtain
\begin{align*}
F(\w_N (t+\tau))& \leq e^{-ct}F(\w_N(\tau))+C(1+\|P_{\leq N}\u_0\|_{\X}^{13}) \\
&\hphantom{X} +\int_0^{t} e^{-c(t-t')}(\| \stick_{t'}^{N}(\xi)\|_{\W^{\al,\frac{4}{\al}}}^{\frac{4}{\al}} +\| \wick{(\stick_{t'}^{N}(\xi))^{2}}_{\g}\|_{L^4}^{7}+\| \wick{(\stick_{t'}^{N}(\xi))^{3}}_{\g}\|_{L^2}^{2}) dt'.
\end{align*}
Therefore, by \eqref{FbyE} and \eqref{EbyF},
\begin{align*}
E&(\w_N(t)) \les F(\w_N(t))+(\|P_{\leq N}\u_0\|_{\X}+\|\pi_1 \stick_{t}^{N}(\xi)\|_{L^6})^{6} \\
& \les e^{-ct}F(\w_N(\tau))+ \int_0^{t} e^{-c(t-t')}(\| \stick_{t'}^{N}(\xi)\|_{\W^{\al,\frac{4}{\al}}}^{\frac{4}{\al}}+\| \wick{(\stick_{t'}^{N}(\xi))^{2}}_{\g}\|_{L^4}^{7}+\| \wick{(\stick_{t'}^{N}(\xi))^{3}}_{\g}\|_{L^2}^{2}) dt' \\
& \hphantom{X} +1+\|P_{\leq N}\u_0\|_{\X}^{13}+\|P_{\leq N}\u_0\|_{\X}^{6}+\|\pi_1 \stick_{t}^{N}(\xi)\|_{L^6}^{6} \\
& \les  (e^{-ct} e^{32C\|\u_0\|_{\X}}+ 1)\\
& \phantom{\les\ }\times \Big(\int_0^{t} e^{-c(t-t')}(\| \stick_{t'}^{N}(\xi)\|_{\W^{\al,\frac 4 \al}}^{\frac{4}{\al}}+\| \wick{(\stick_{t'}^{N}(\xi))^{2}}_{\g}\|_{L^4}^{7}+\| \wick{(\stick_{t'}^{N}(\xi))^{3}}_{\g}\|_{L^2}^{2}) dt'\Big) \\
& \hphantom{X} +1+\|P_{\leq N}\u_0\|_{\X}^{13}+\|P_{\leq N}\u_0\|_{\X}^{6}+\|\pi_1 \stick_{t}^{N}(\xi)\|_{L^6}^{6} \\
& \les \eta(t;\|\u_0\|_{\X}) \Big(1+\|P_{\leq N}\u_0\|_{\X}^{13}+\|\pi_1 \stick_{t}^{N}(\xi)\|_{L^{6}}^{6} \\
& \hphantom{X}+ \int_0^{t} e^{-c(t-t')}(\| \stick_{t'}^{N}(\xi)\|_{\W^{\al,\frac{4}{\al}}}^{\frac{4}{\al}}+\| \wick{(\stick_{t'}^{N}(\xi))^{2}}_{\g}\|_{L^4}^{7}+\| \wick{(\stick_{t'}^{N}(\xi))^{3}}_{\g}\|_{L^2}^{2}) dt'\Big).
\end{align*}
\end{proof}

\section{Semigroup property of the flow and existence of an invariant measure}

Given $\u_0\in \X$, we write $\Phi_{t}(\u_0; \xi)=S(t)\u_0+\stick_{t}(\xi)+\v$, where $\v$ solves \eqref{veqn}.
We denote by $\B_{b}(\X)$ the space of measurable, bounded functions from $\X$ to $\R$, and by $\mathcal{C}_{b}(\X)$ the space of bounded continuous functions from $\X$ to $\R$, both endowed with the norm 
\begin{align*}
\|F\|_{L^{\infty}}:=\sup_{\u_0 \in \X}|F(\u_0)|.
\end{align*}
 Given $F\in \mathcal{B}_{b}(\X)$, we define the family of bounded linear operators $\Pt{t}:F\mapsto \Pt{t}F$ for $t\geq 0$, by 
\begin{align}
\Pt{t}F(\u_0):=\E[ F(\Phi_{t}(\u_0;\xi))], \label{Ptdefn}
\end{align}
for every $\u_0\in \X$. 
We will show that $\{ \Phi_{t} \}_{t\geq 0}$ is a Markov process and $\{\Pt{t}\}_{t\geq 0}$ is a Markov semigroup with respect to the filtration $\{\F_{t}\}_{t\geq 0}$. 

\begin{proposition}\label{PROP:markov}
Let $\u_0\in \X$ and $\Phi_{t}(\u_0;\xi)$ be as given above. Then for every $F\in \B_{b}(\X)$ and $t\geq 0$, we have 
\begin{align}
\E\big[ F(\Phi_{t+h}(\u_0;\xi))\vert \F_{t}\big] = \Pt{h}F(\Phi_{t}(\u_0;\xi)) \label{markov}
\end{align}
for all $h\geq 0$. In particular, $\{ \Phi_{t} \}_{t\geq 0}$ is a Markov process and $\{\Pt{t}\}_{t\geq 0}$ is a Markov semigroup.
\end{proposition}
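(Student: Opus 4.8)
The plan is to prove \eqref{markov} by the classical two-step scheme: first a deterministic \emph{cocycle identity} expressing $\Phi_{t+h}$ as $\Phi_h$ composed with a time-shifted noise, and then the independence of the shifted noise from $\F_t$ together with the substitution (``freezing'') property of conditional expectation. Throughout, by global well-posedness (Proposition~\ref{PROP:GWP}) the solution $\Phi_r(\u_0,\xi)=S(r)\u_0+\stick_r(\xi)+\v(r)$ is defined for every $r\ge0$ and $\u_0\in\X$. Write $\theta_t\xi:=\xi(t+\cdot\,)$ for the time-translated noise; by the covariance \eqref{whiteprop}, $\theta_t\xi$ is again a space-time white noise with the same law as $\xi$, and since the flow of \eqref{veqn} uses $\theta_t\xi$ only through its restriction to $[0,\infty)\times\T^2$ — that is, through $\xi$ on $(t,\infty)\times\T^2$, whose time-support is disjoint from that of the test functions generating $\tilde\F_t$ — it is independent of $\F_t$. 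The first computational ingredient is the decomposition
\[
\stick_{t+h}(\xi)=S(h)\,\stick_t(\xi)+\stick_h(\theta_t\xi),
\]
which follows from linearity of $\xi\mapsto\stick(\xi)$ and the semigroup property $S(h)S(r)=S(h+r)$ of the linear propagator upon splitting the time integral in \eqref{stick} at $t$; this can be made rigorous by first establishing it for the smooth truncations $P_{\le N}\stick$ and passing to the limit using Proposition~\ref{PROP:stickZ}.

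Next I would establish the cocycle identity $\Phi_{t+h}(\u_0,\xi)=\Phi_h\big(\Phi_t(\u_0,\xi),\theta_t\xi\big)$ almost surely. Let $\v$ solve \eqref{veqn} with data $\u_0$ and noise $\xi$, and set $\mathbf z(r):=\v(t+r)-S(r)\v(t)$. Splitting the Duhamel integral \eqref{veqn} for $\v(t+r)$ at time $t$ and using the semigroup property of $S$ together with the decomposition above, one checks that the argument of the nonlinearity at time $t+r$ equals $S(r)\Phi_t(\u_0,\xi)+\stick_r(\theta_t\xi)+\mathbf z(r)$; hence $\mathbf z$ solves \eqref{veqn} with data $\Phi_t(\u_0,\xi)$ (which lies in $\X$ by Proposition~\ref{PROP:stickZ} and Lemma~\ref{LEM:Hcompact}) and noise $\theta_t\xi$. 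By the uniqueness part of Proposition~\ref{PROP:LWP} and global existence, $\mathbf z$ coincides with the $\v$-component $\tilde\v$ of $\Phi_h(\Phi_t(\u_0,\xi),\theta_t\xi)$, and then
\[
\Phi_{t+h}(\u_0,\xi)=S(h)\big[S(t)\u_0+\stick_t(\xi)+\v(t)\big]+\stick_h(\theta_t\xi)+\mathbf z(h)=S(h)\Phi_t(\u_0,\xi)+\stick_h(\theta_t\xi)+\tilde\v(h),
\]
which is precisely $\Phi_h(\Phi_t(\u_0,\xi),\theta_t\xi)$.

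It remains to pass from the cocycle identity to \eqref{markov}. Since $S(t)\u_0$ is deterministic, $\stick_t(\xi)$ depends only on $\xi|_{[0,t]\times\T^2}$, and $\v(t)$ is obtained from $\u_0$ and the path $\{\stick_r(\xi)\}_{0\le r\le t}$ by the deterministic contraction of Proposition~\ref{PROP:LWP}, the random variable $\Phi_t(\u_0,\xi)$ is $\F_t$-measurable as a map into $\X$. Moreover, for $F\in\CC_b(\X)$ the function $\u_0\mapsto\Pt{h}F(\u_0)=\E[F(\Phi_h(\u_0,\xi))]$ is continuous, hence Borel: indeed $\u_0\mapsto\Phi_h(\u_0,\xi)=S(h)\u_0+\stick_h(\xi)+\v(h)$ is almost surely continuous from $\X$ to $\X$ by \eqref{S(t)onX}, Proposition~\ref{PROP:stickZ}, Lemma~\ref{LEM:ctyofflow} and the embedding $\H^1\embeds\X$, so bounded convergence applies. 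Applying the freezing lemma to the cocycle identity — using that $\theta_t\xi$ is independent of $\F_t$ with $\theta_t\xi\overset{d}{=}\xi$, and that $\Pt{h}F(\mathbf y)=\E[F(\Phi_h(\mathbf y,\theta_t\xi))]$ — yields
\[
\E\big[\,F(\Phi_{t+h}(\u_0,\xi))\,\big|\,\F_t\,\big]=\Pt{h}F\big(\Phi_t(\u_0,\xi)\big)
\]
for $F\in\CC_b(\X)$; the general case $F\in\B_b(\X)$ follows by a monotone class argument, since the linear space of bounded $F$ for which $\Pt{h}F$ is Borel and \eqref{markov} holds contains $\CC_b(\X)$ and is closed under bounded monotone limits. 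Finally, taking expectations in \eqref{markov} gives the semigroup identity $\Pt{t+h}F=\Pt{t}\Pt{h}F$, while $\Pt{t}1=1$ and $\Pt{t}F\ge0$ for $F\ge0$ are immediate from \eqref{Ptdefn}; thus $\{\Phi_t\}_{t\ge0}$ is a Markov process and $\{\Pt{t}\}_{t\ge0}$ a Markov semigroup.

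The step I expect to be the main obstacle is the rigorous verification of the cocycle identity: giving a precise meaning to the stochastic convolution decomposition at the level of the genuinely distribution-valued object $\stick_t(\xi)$ (cleanest through the finite-dimensional truncations and a limiting argument), and matching the nonlinear Duhamel terms carefully enough that the uniqueness statement of Proposition~\ref{PROP:LWP} can be invoked for $\mathbf z$. The measurability bookkeeping, the monotone class extension, and the independence of $\theta_t\xi$ from $\F_t$ are routine.
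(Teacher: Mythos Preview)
Your proposal is correct and follows essentially the same route as the paper: establish a flow-splitting (cocycle) identity by matching Duhamel terms and invoking uniqueness from Proposition~\ref{PROP:LWP}, then use independence of the post-$t$ noise from $\F_t$ together with the freezing property of conditional expectation. The only cosmetic differences are that the paper writes the shifted stochastic convolution as $\stick_{t,t+h}(\xi)$ rather than $\stick_h(\theta_t\xi)$, and applies the freezing lemma \cite[Proposition~1.12]{DPZ1} directly for $F\in\B_b(\X)$ instead of first treating $F\in\CC_b(\X)$ and extending by a monotone class argument.
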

\begin{proof}
We use the same argument as in~\cite[Section 4.1]{TW}. We first introduce some notation.
Given $h>0$, we define 
\begin{align}
\stick_{t,t+h}(\xi):=\int_t^{t+h} S(t+h-t')\vec{0}{\sqrt{2}\jb{\nb}^{-s}\xi(t')} dt', \label{sticksplit}
\end{align}
which is the solution to the linear damped wave equation \eqref{NLWvec} started from time $t$ with zero initial data, and similarly, 
\begin{align}
\begin{split}
\v_{t,t+h}&:= \v_{t,t+h}(\Phi_{t}(\u_0;\xi), \stick_{t,t+h}(\xi)) \\
& =-\int_t^{t+h} S(t+h-t') \vec{0}{\NN_{\g}[S(t'-t)\Phi_{t}(\u_0)+\stick_{t,t+t'}(\xi)+\v_{t,t+t'})]  } dt',
\label{vsplit}
\end{split}
\end{align}
which is the solution to \eqref{veqn} starting at time $t$, driven by $S(\cdot)\Phi_{t}(\u_0)$ and $\stick_{t,t+\cdot}$. 
We first claim that
\begin{align}
\Phi_{t+h}(\u_0,\xi)=S(h)\Phi_{t}(\u_0,\xi)+\stick_{t,t+h}(\xi)+\v_{t,t+h}(\Phi_{t}(\u_0,\xi), \stick_{t,t+h}(\xi)). \label{flowsplit}
\end{align}
From \eqref{sticksplit}, we have
\begin{align}
\stick_{0,t+h}(\xi)=\stick_{t,t+h}(\xi)+S(h)\stick_{0,t}(\xi), \label{sticksplit2}
\end{align}
while from \eqref{vsplit}, we have
\begin{align}
\v_{0,t+h}(\u_0,\stick_{0,t+h})&=S(h)\v_{0,t}(\u_0,\stick_{0,t})\notag\\
&\phantom{=\ }+\int_{t}^{t+h}S(t+h-t') \vec{0}{\NN_{\g}[S(t')\u_0+\stick_{0,t'}(\xi)+\v_{0,t'})]  }dt'. \label{vsplit2}
\end{align}
Using \eqref{sticksplit2}, we have 
\begin{align*}
S(t')\u_0+\stick_{0,t'}(\xi)+\v_{0,t'}=S(t'-t)\Phi_{t}(\u_0,\xi)+\stick_{t,t'}(\xi)+\v_{0,t'}-S(t'-t)\v_{0,t},
\end{align*}
so by \eqref{vsplit2},
\begin{align*}
&\phantom{=\ }\v_{0,t+h}(\u_0,\stick_{0,t+h})- S(h)\v_{0,t}(\u_0,\stick_{0,t})\\
&= \int_{t}^{t+h}S(t+h-t') \vec{0}{\NN_{\g}[S(t'-t)\Phi_{t}(\u_0,\xi)+\stick_{t,t'}(\xi)+\v_{0,t'}-S(t'-t)\v_{0,t}]  }dt'.
\end{align*}
This shows that $\v_{0,t+h}(\u_0,\stick_{0,t+h})- S(h)\v_{0,t}(\u_0,\stick_{0,t})$ solves the equation \eqref{vsplit}. Therefore, by uniqueness of solutions (see Proposition \ref{PROP:LWP}), we have that 
\begin{align*}
\v_{0,t+h}(\u_0,\stick_{0,t+h})=S(h)\v_{0,t}(\u_0,\stick_{0,t})+\v_{t,t+h}(\Phi_{t}(\u_0),\stick_{t,t+h}),
\end{align*}
so \eqref{flowsplit} holds.
Now, \eqref{flowsplit} allows us to view $\Phi_{t+h}(\u_0,\xi)$ as a function of $\stick_{t,t+h}$ and $\Phi_{t}(\u_0,\xi)$, where the former is independent from $\F_{t}$, and the latter is $\F_{t}$-measurable. Therefore, by \cite[Proposition 1.12]{DPZ1}, we have 
\begin{align} \label{condExp}
\E\big[ F(\Phi_{t+h}(\u_0;\xi))\vert \F_{t}\big] = \Psi(\Phi_{t}(\u_0,\xi)),
\end{align}
where 
\begin{align*}
\Psi(\u_0) = \E\big[ F(S(h)\u_0+ \stick_{t,t+h} + \v_{t,t+h}(\u_0, \stick_{t,t+h}) \big].
\end{align*}
By performing the change of variable $ t' = t + s'$ in \eqref{vsplit}, we see that $\v_{t_0,t}(\u_0, \stick_{t_0,t_0 + t})$ solves \eqref{veqnd} with $\stick_{t}$ replaced by $\stick_{t_0, t_0 + t}$ and $\v_0 = 0$. Therefore, by uniqueness of solutions to \eqref{veqnd} (see Proposition \ref{PROP:LWP}), we have that $\v_{t_0,t}(\u_0, \stick_{t_0,t_0 + t}) = \v_{0,t}(\u_0, \stick_{t_0,t_0 + t})$. Recalling that $\mathcal L( \stick_{t_0,t_0 + t}) = \mathcal L(\stick_{t})$, we have that 
\begin{align*}
\Psi(\u_0)&= \E\big[ F(S(h)\u_0+ \stick_{t,t+h} + \v_{t,t+h}(\u_0, \stick_{t,t+h}) \big] \\
&= \E\big[ F(S(h)\u_0+ \stick_{h} + \v_{0,h}(\u_0, \stick_{0,h}) \big]\\
&=\E\big[ F(\Phi_h(\u_0, \xi))\big]\\
&=\Pt{h}(\u_0).
\end{align*}
By plugging this into \eqref{condExp}, we obtain \eqref{markov}.
Finally, the semigroup property for $\Pt{t}$ follows from \eqref{markov} since for any $\u_0\in \X$ and $F\in \B_{b}(\X)$, we have
\begin{align*}
\Pt{t+h}F(\u_0)& = \E \big[F\big(\Phi_{t+h}(\u_0;\xi)\big) \big] \\
& = \E \big[\E\big[ F(\Phi_{t+h}(\u_0;\xi))\vert \F_{t}\big]  \big]  \\
&  =\E \big[ \Pt{h}F(\Phi_{t}(\u_0;\xi))\big] \\
& = \Pt{t}\Pt{h} F(\u_0).
\end{align*}
\end{proof}

\begin{proposition}\label{PROP:Feller}
The Markov semigroup $\{\Pt{t}\}_{t\geq 0}$ in \eqref{Ptdefn} is Feller on $\X$. More precisely, if $F\in \CC_{b}(\X)$, then $\Pt{t}F\in \CC_{b}(\X)$ for any $t\geq 0$.
\end{proposition}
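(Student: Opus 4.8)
The plan is to reduce the statement to an almost-sure continuity property of the flow map and then invoke dominated convergence. Boundedness of $\Pt{t}F$ is immediate, since $|\Pt{t}F(\u_0)| = |\E[F(\Phi_t(\u_0,\xi))]| \le \|F\|_{L^\infty}$ for every $\u_0 \in \X$, so the content of the statement is continuity of $\u_0 \mapsto \Pt{t}F(\u_0)$. First I would fix $t \ge 0$ and a sequence $\u_0^{(k)} \to \u_0$ in $\X$, and show that $\Phi_t(\u_0^{(k)},\xi) \to \Phi_t(\u_0,\xi)$ in $\X$ almost surely. Writing $\Phi_t(\u_0^{(k)},\xi) = S(t)\u_0^{(k)} + \stick_t(\xi) + \v^{(k)}(t)$ as in \eqref{soldef}, with $\v^{(k)}$ the solution of \eqref{veqn} with data $\u_0^{(k)}$, the stochastic convolution term does not depend on the initial data, and by \eqref{S(t)onX} we have $\|S(t)(\u_0^{(k)}-\u_0)\|_{\X} \le e^{-t/8}\|\u_0^{(k)}-\u_0\|_{\X} \to 0$, so the linear part converges in $\X$.

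For the remaining term $\v^{(k)}(t)$, I would invoke the global well-posedness result Proposition~\ref{PROP:GWP}, which guarantees that $\v = \v(\u_0)$ exists on $[0,\infty)$ almost surely, i.e.\ the maximal time satisfies $T^\ast(\u_0,\xi) = +\infty$. One can then apply the continuous-dependence estimate of Lemma~\ref{LEM:ctyofflow} with $T = t$: for a.e.\ $\omega$ there is a (random) neighbourhood $\mathcal{U}(\omega)$ of $\u_0$ in $\X$ such that $\v^{(k)}$ is defined on $[0,t]$ once $\u_0^{(k)} \in \mathcal{U}(\omega)$ — which occurs for all $k$ large enough since $\u_0^{(k)} \to \u_0$ deterministically — and $\|\v^{(k)} - \v\|_{C([0,t];\H^1)} \to 0$. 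In particular $\v^{(k)}(t) \to \v(t)$ in $\H^1$, and since $\H^1 \embeds \X$ by Lemma~\ref{LEM:Hcompact}, also in $\X$. Combining the three pieces gives $\Phi_t(\u_0^{(k)},\xi) \to \Phi_t(\u_0,\xi)$ in $\X$ almost surely.

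To conclude, I would use that $F \in \CC_b(\X)$: continuity of $F$ yields $F(\Phi_t(\u_0^{(k)},\xi)) \to F(\Phi_t(\u_0,\xi))$ almost surely, and boundedness of $F$ allows passing to the limit under the expectation by dominated convergence, so $\Pt{t}F(\u_0^{(k)}) \to \Pt{t}F(\u_0)$; hence $\Pt{t}F \in \CC_b(\X)$. The one point requiring a little care is the interplay between the almost-sure (but $\omega$-dependent) nature of the neighbourhood $\mathcal{U}$ in Lemma~\ref{LEM:ctyofflow} and the deterministic convergence $\u_0^{(k)}\to\u_0$: because the latter is deterministic, for almost every $\omega$ the sequence eventually enters $\mathcal{U}(\omega)$, so the continuous-dependence estimate applies on a full-measure set. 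No uniformity in $\omega$, and no quantitative rate, is needed for this argument; the main technical inputs (global existence and continuous dependence) have already been established.
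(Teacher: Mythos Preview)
Your proof is correct and follows essentially the same approach as the paper: the paper's proof simply states that continuity of $\u_0 \mapsto \Phi_t(\u_0,\xi)$ follows from Lemma~\ref{LEM:ctyofflow} and \eqref{S(t)onX}, and then invokes dominated convergence. Your version spells out the decomposition $\Phi_t = S(t)\u_0 + \stick_t(\xi) + \v(t)$, the use of global well-posedness to apply Lemma~\ref{LEM:ctyofflow} at any $T=t$, and the embedding $\H^1 \embeds \X$, all of which are implicit in the paper's two-line argument.
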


\begin{proof}
It follows from Lemma~\ref{LEM:ctyofflow} and \eqref{S(t)onX} that the map $\u_0 \mapsto \Phi_{t}(\u_0,\xi)$ is continuous. The claim then follows from the dominated convergence theorem. 
\end{proof}

\begin{proposition} \label{PROP:KryBo}
For every $\u_0 \in \X$, there exists a probability measure $\mu$ on $\X$ and a sequence $t_{k}\to \infty$ such that 
\begin{align*}
\frac{1}{t_{k}}\int_{0}^{t_{k}}\Pt{t}^{\ast}\delta_{\u_0} \,  dt \wto \mu.
\end{align*}
In particular, $\mu$ is invariant for the semigroup $\{\Pt{t}\}_{t\geq 0}$ on $\X$.
\end{proposition}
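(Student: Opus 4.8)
The plan is to run the classical Krylov--Bogolyubov scheme. Write $\mu_t := \frac1t\int_0^t \Pt{s}^\ast\delta_{\u_0}\,ds$, so it suffices to show that the family $\{\mu_t\}_{t\ge1}$ is tight on $\X$, extract a subsequence $\mu_{t_k}\wto\mu$ via Prokhorov's theorem, and then identify $\mu$ as invariant using the Feller property (Proposition~\ref{PROP:Feller}) and the semigroup property (Proposition~\ref{PROP:markov}). Essentially all of the work is in the tightness.

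\emph{Tightness.} Fix $\al'\in(\al,\,s\wedge 1)$, which is possible since $\al<s$ and $\al<\tfrac13$. Decompose $\Phi_s(\u_0,\xi)=S(s)\u_0+\stick_s(\xi)+\v(s)$ and control the three pieces separately: $S(s)\u_0\in O_{\u_0}$, which is compact in $\X$ by Lemma~\ref{compactOrbits}; $\stick_s(\xi)\in Z^{\al'}$ with $\sup_{s\ge 0}\E\big[\|\stick_s(\xi)\|_{Z^{\al'}}^{p}\big]\le C_p$ by Proposition~\ref{PROP:stickZ} (applied with $\al'$ in place of $\al$), and bounded sets of $Z^{\al'}$ are compact in $\X$ by Lemma~\ref{LEM:Zcompact}; and $\v(s)\in\H^{1+\al}$ with, as explained below, $\sup_{s\ge 0}\E\big[\|\v(s)\|_{\H^{1+\al}}^{p}\big]<\infty$, while bounded sets of $\H^{1+\al}$ are compact in $\X$ by Lemma~\ref{LEM:Hcompact}. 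Granting this, for each $R>0$ the set $K_R$ defined as the closure in $\X$ of $O_{\u_0}$ plus the closed balls of radius $R$ in $Z^{\al'}$ and in $\H^{1+\al}$ is compact in $\X$ (a finite sum of precompact sets is precompact), and $\Phi_s(\u_0,\xi)\in K_R$ whenever $\|\stick_s(\xi)\|_{Z^{\al'}}\le R$ and $\|\v(s)\|_{\H^{1+\al}}\le R$. By Chebyshev's inequality and the two moment bounds, $\sup_{s\ge 0}\prob\big(\Phi_s(\u_0,\xi)\notin K_R\big)\to 0$ as $R\to\infty$, hence $\mu_t(K_R^{c})=\frac1t\int_0^t\prob\big(\Phi_s(\u_0,\xi)\notin K_R\big)\,ds$ is, for $R$ large, smaller than any prescribed $\ep>0$, uniformly in $t\ge1$; this is tightness.

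\emph{The bound $\sup_s\E\|\v(s)\|_{\H^{1+\al}}^{p}<\infty$.} This is the one point requiring a genuine argument, and I expect it to be the main obstacle: Proposition~\ref{PROP:GWP} only controls $\v(s)$ in $\H^1$, whose balls are \emph{not} compact in $\X$ (they fail the equicontinuity needed for $t\mapsto S(t)\u$ to lie in $C_0([0,\infty);\W^{\al,2/\al})$), so one must upgrade the regularity of $\v$ by one derivative. From the explicit form~\eqref{linsol}, $S(\tau)\vec{0}{g}$ is $e^{-\tau/2}$ times a vector with first component $\jb{\nb}^{-1}\sin(\tau\jb{\nb})g$ and second component $\big(\cos(\tau\jb{\nb})-\tfrac12\jb{\nb}^{-1}\sin(\tau\jb{\nb})\big)g$, so $\big\|S(\tau)\vec{0}{g}\big\|_{\H^{1+\al}}\les e^{-\tau/2}\|g\|_{H^{\al}}$. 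Applying this in~\eqref{veqn} with $g(t')=\NN_\g[\pi_1(S(t')\u_0+\stick_{t'}(\xi)+\v(t'))]$ gives $\|\v(t)\|_{\H^{1+\al}}\les\int_0^t e^{-(t-t')/2}\|g(t')\|_{H^{\al}}\,dt'$. The fractional Leibniz rule (Lemma~\ref{LEM:fraclieb}) together with the Sobolev embeddings $\W^{\al,2/\al}(\T^2)\embeds L^{q}$ and $\H^1\embeds L^{q}$ for every $q<\infty$ bound $\|g(t')\|_{H^{\al}}$ by a fixed polynomial in $\|\u_0\|_{\X}$, $\|\v(t')\|_{\H^1}$ and the norms $\|\stick_{t'}(\xi)\|_{\W^{\al,q}}$, $\|\wick{(\stick_{t'}(\xi))^{2}}_{\g}\|_{L^{q}}$, $\|\wick{(\stick_{t'}(\xi))^{3}}_{\g}\|_{L^{2}}$ for suitable $q$. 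Taking $L^{p}(\O)$-norms, using Minkowski's and H\"older's inequalities, the moment bounds~\eqref{vmoments} for $\v$ and~\eqref{momentsstick} for $\stick$, and the uniform-in-time bound $\sup_{t\ge 0}\eta(t;\|\u_0\|_{\X})<\infty$, the integrand is bounded uniformly in $t'$; since $e^{-(t-t')/2}$ is integrable, this yields $\sup_{t\ge 0}\E\big[\|\v(t)\|_{\H^{1+\al}}^{p}\big]\le C(p,\|\u_0\|_{\X})<\infty$.

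\emph{Invariance.} By Prokhorov's theorem there are $t_k\to\infty$ and a probability measure $\mu$ on $\X$ with $\mu_{t_k}\wto\mu$. Fix $F\in\CC_b(\X)$ and $h\ge 0$; then $\Pt{h}F\in\CC_b(\X)$ by Proposition~\ref{PROP:Feller}, so $\int\Pt{h}F\,d\mu_{t_k}\to\int\Pt{h}F\,d\mu$ and $\int F\,d\mu_{t_k}\to\int F\,d\mu$. On the other hand, by the semigroup property, $\int\Pt{h}F\,d\mu_{t_k}=\frac1{t_k}\int_0^{t_k}\Pt{s+h}F(\u_0)\,ds=\frac1{t_k}\int_h^{t_k+h}\Pt{u}F(\u_0)\,du$, which differs from $\int F\,d\mu_{t_k}=\frac1{t_k}\int_0^{t_k}\Pt{u}F(\u_0)\,du$ by at most $\frac{2h\|F\|_{L^{\infty}}}{t_k}\to 0$. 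Letting $k\to\infty$ gives $\int\Pt{h}F\,d\mu=\int F\,d\mu$ for all $F\in\CC_b(\X)$ and all $h\ge 0$, i.e. $\Pt{h}^{\ast}\mu=\mu$, so $\mu$ is invariant for $\{\Pt{t}\}_{t\ge 0}$.
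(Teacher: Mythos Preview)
Your proposal is correct and follows essentially the same route as the paper: the same three-term decomposition, the same compactness inputs (Lemma~\ref{compactOrbits} for $O_{\u_0}$, Lemma~\ref{LEM:Zcompact} for $\stick_t$ in $Z^{\al'}$, Lemma~\ref{LEM:Hcompact} for $\v$ in $\H^{1+\al}$), the same upgrade of $\v$ from $\H^1$ to $\H^{1+\al}$ via the Duhamel formula, and the same Prokhorov--Feller invariance argument. The paper records the $\H^{1+\al}$ bound more tersely as the single inequality $\|\v(t)\|_{\H^{1+\al}}\les\int_0^t e^{-(t-t')/2}\big(\|S(t')\u_0\|_{\X}^3+\|\stick_{t'}(\xi)\|_{Z^{\al}}^3+\|\v(t')\|_{\H^1}^3\big)\,dt'$ and then takes a first moment, whereas you spell out the fractional Leibniz step and take $L^p(\Omega)$ moments, but the content is the same.
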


\begin{proof}
 We write $\Phi_{t}( {\u_0},\xi)=S(t) \u_0 + \stick_{t}(\xi)+\v(t)$, where $\v$ solves \eqref{veqn} and by \eqref{SonH} and \eqref{veqn} satisfies
\begin{align}
\|\v(t)\|_{\H^{1+\al}} \leq C\int_{0}^{t}e^{-\frac{t-t'}{2}}\big(\|S(t') \u_0\|_{\X}^3 + \|\stick_{t'}(\xi)\|_{Z^{\al}}^{3}+\|\v(t')\|_{\H^{1}}^{3} \big) dt'. \label{tightv}
\end{align}
Let $\s :=\frac{\al+s}{2}\in (\al,s)$.
Given $R>0$, let $B_{R}$ denote the closed ball of radius $R$, centred at zero in $Z^{\s}+\mathcal{H}^{1+\al}\subset \X$. From Lemma~\ref{LEM:Hcompact} and Lemma~\ref{LEM:Zcompact}, $B_{R}\subset Z^{\s}+\H^{1+\al}$ is compact in $\X$. Moreover, by Lemma \ref{compactOrbits}, the set $O_{\u_0} = \{ S(t)\u_0: t \ge 0\} \cup \{{\bf 0}\}$ defined in \eqref{orbit} is also compact in $\X$. 
Then, 
\begin{align*}
\Pt{t}^{\ast}\dl_{\u_0}\big((O_{\u_0} + B_{R})^{c}\big)&=\L( \Phi_{t}( {\u_0},\xi))\big((O_{\u_0} + B_{R})^{c}\big) \\
& \le \prob\big( \|\stick_{t}(\xi)\|_{Z^{\s}}+\|\v(t)\|_{\H^{1+\al}}>R\big) \\
& \leq 2R^{-1}\,\E\big[  \|\stick_{t}(\xi)\|_{Z^{\s}}\big]+2R^{-1}\,\E\big[  \|\v(t)\|_{\H^{1+\al}}\big].
\end{align*}
The bound \eqref{vmoments} from Proposition~\ref{PROP:GWP} and \eqref{momentsstick} imply $\E[\|\v(t)\|^3_{\H^{1}}] \leq C = C(\u_0)$, uniformly in $t\geq0$. Thus, by \eqref{momentsstick} and \eqref{tightv}, we have 
\begin{align*}
\Pt{t}^{\ast}\dl_{\u_0}\big((O_{\u_0} + B_{R})^{c}\big) \leq \frac{C'}{R}.
\end{align*}
For every $\ep>0$, there exists $R_{\ep}$ such that 
\begin{align*}
\Pt{t}^{\ast}\dl_{\u_0}\big((O_{\u_0} + B_{R_\eps})^{c}\big) < \ep
\end{align*}
for any $t> 0$. Therefore, $\{ \Pt{t}^{\ast}\dl_{\u_0}\}_{t\ge 0}$ is tight in $\X$ and so to is the family of probability measures 
\begin{align*}
\mu_{t}:=\frac{1}{t}\int_{0}^{t}\Pt{t'}^{\ast}\dl_{\u_0}\, dt'.
\end{align*}
Prokhorov's theorem implies there exists a probability measure $\mu$ and a sequence $t_{k}\to \infty$ such that $\mu_{t_k}$ converges weakly to $\mu$ as $k\to \infty$.
Given $\phi\in \CC_{b}(\X)$, Proposition~\ref{PROP:Feller} implies $\Pt{t}\phi\in \CC_{b}(\X)$ for any $t>0$. 
Thus for any $t>0$, we have 
\begin{align*}
\jb{\Pt{t}^{\ast}\mu,\phi}&= \lim_{k\to \infty} \frac{1}{t_k} \int_{0}^{t_k}\jb{\Pt{t}^\ast\Pt{s}^{\ast}\dl_{\u_0},\phi}ds \\
& =  \lim_{k\to \infty} \frac{1}{t_k} \int_{t}^{t+t_k}\jb{\Pt{s}^{\ast}\dl_{\u_0},\phi}ds \\
&= \lim_{k\to \infty}\bigg[ \frac{1}{t_k}\int_{0}^{t_{k}}\jb{\Pt{s}^{\ast}\dl_{\u_0},\phi}ds + \frac{1}{t_k}\int_{t_k}^{t+t_k}\jb{\Pt{s}^{\ast}\dl_{\u_0},\phi}ds-\frac{1}{t_k}\int_{0}^{t}\jb{\Pt{s}^{\ast}\dl_{\u_0},\phi}ds\bigg]\\
& = \jb{\mu,\phi},
\end{align*} 
since the last two terms in the third equality above can be bounded by $\frac{2t}{t_k}\|\phi\|_{L^\infty}\to 0$ as $k\to \infty$. This shows that $\mu$ is invariant for $\{\Pt{t}\}_{t>0}$.
\end{proof}

\section{Uniqueness of the invariant measure} 
As discussed in the introduction, the semigroup $\Pt{t}$ defined in \eqref{Ptdefn} is not strong Feller. Indeed, the following holds.

\begin{proposition}\label{PROP:notsf}
Let $\alpha < \alpha' < s\wedge 1$. Let $F(\u_0):= \ind_{\{\u_0 \in \H^{\alpha'}\}}$. For every $t \ge 0$, we have that 
\begin{equation*}
\E[F(\Phi_t(\u_0,\xi))] = F(\u_0).
\end{equation*}
In particular, $\Pt{t} F(\u_0)$ is an everywhere discontinuous function of $\u_0 \in \X$.
\end{proposition}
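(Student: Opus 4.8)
The plan is to prove the sharper statement that $F(\Phi_t(\u_0,\xi)) = F(\u_0)$ holds \emph{almost surely}, and then take expectations. The point is that in the decomposition $\Phi_t(\u_0,\xi) = S(t)\u_0 + \stick_t(\xi) + \v(t)$ of \eqref{soldef}, the two perturbations $\stick_t(\xi)$ and $\v(t)$ are \emph{both} at least as regular as $\H^{\alpha'}$, while the linear propagator $S(t)$ neither gains nor loses Sobolev regularity. First I would record that $\Phi_t(\u_0,\xi)$ is globally defined by Proposition~\ref{PROP:GWP}, and that a.s.\ $\v(t) \in \H^1 \subset \H^{\alpha'}$, using $\alpha' < 1$. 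Next, since $\alpha' < s$, fix $\alpha'' \in (\alpha',s)$; Proposition~\ref{PROP:REGSTICK} gives $\stick_t(\xi) \in \CC^{\alpha''}(\T^2) = C^{\alpha''}(\T^2)\times C^{\alpha''-1}(\T^2)$ a.s., and the standard embeddings $C^{\alpha''}(\T^2) \embeds H^{\alpha'}(\T^2)$ and $C^{\alpha''-1}(\T^2)\embeds H^{\alpha'-1}(\T^2)$ (valid on the torus since $\alpha' < \alpha''$) give $\stick_t(\xi) \in \H^{\alpha'}$ a.s. Finally, from the explicit formula \eqref{linsol} one has $S(t) = e^{-t/2}M(t)$ with $M(t)$ a matrix of Fourier multipliers whose determinant, computed as a multiplier, equals $\cos^2(t\jb{\nb}) + \sin^2(t\jb{\nb}) = 1$; hence $S(t)^{-1} = e^{t/2}M(t)^{-1}$ is again bounded on $\H^{\alpha'} = H^{\alpha'}\times H^{\alpha'-1}$, so $S(t)$ restricts to a bijection of $\H^{\alpha'}$ onto itself and $S(t)\u \in \H^{\alpha'} \Longleftrightarrow \u \in \H^{\alpha'}$. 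Combining the three facts, $\Phi_t(\u_0,\xi) \in \H^{\alpha'} \Longleftrightarrow S(t)\u_0 \in \H^{\alpha'} \Longleftrightarrow \u_0 \in \H^{\alpha'}$ almost surely, i.e.\ $F(\Phi_t(\u_0,\xi)) = F(\u_0)$ a.s.

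\textbf{The discontinuity claim.} Since $\Pt{t}F = F$ as functions on $\X$, it suffices to show $F$ is discontinuous at every point of $\X$. If $\u_0 \in \X \setminus \H^{\alpha'}$, then $F(\u_0) = 0$, but trigonometric polynomials are dense in $\X$ by the very definition of $X^{\alpha}$ and lie in $\H^{\alpha'}$, so $F$ takes the value $1$ arbitrarily close to $\u_0$. If $\u_0 \in \H^{\alpha'}$, then $F(\u_0) = 1$, but by \cite[Lemma 1.4]{t18erg} (cf.\ Remark~\ref{Xnecessity}) there is $\w \in \X \setminus \H^{\alpha'}$, and then $\u_0 + \e\w \to \u_0$ in $\X$ as $\e \to 0$ while $\u_0 + \e\w \notin \H^{\alpha'}$ for $\e \ne 0$, so $F$ takes the value $0$ arbitrarily close to $\u_0$. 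Either way $F$ is discontinuous at $\u_0$. (That $F \in \B_b(\X)$ in the first place follows by writing $\H^{\alpha'}\cap\X = \bigcup_n \bigcap_N \{\u : \|P_{\le N}\u\|_{\H^{\alpha'}} \le n\}$ and using continuity of $\u \mapsto \|P_{\le N}\u\|_{\H^{\alpha'}}$ on $\X$.)

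\textbf{Main obstacle.} I expect no genuine analytic difficulty: the whole content is the invertibility of $S(t)$ on $\H^{\alpha'}$ — precisely the feature distinguishing this wave setting from the parabolic case — which is immediate from \eqref{linsol}, together with the observation that $\v(t)$ and $\stick_t(\xi)$ are too regular to alter membership of $\Phi_t(\u_0,\xi)$ in $\H^{\alpha'}$. The only places needing a little care are the Hölder-to-Sobolev embedding on $\T^2$, the Borel measurability of $F$, and the two density statements underpinning the everywhere-discontinuity.
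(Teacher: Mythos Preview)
Your proposal is correct and follows essentially the same approach as the paper: both argue that $\stick_t(\xi)$ and $\v(t)$ lie in $\H^{\alpha'}$ a.s., that $S(t)$ is a bijection on $\H^{\alpha'}$, and hence $F(\Phi_t(\u_0,\xi)) = F(\u_0)$ a.s., with the discontinuity following from $\H^{\alpha'}\cap\X$ being a proper dense subspace. The only cosmetic differences are that the paper invokes Proposition~\ref{PROP:stickZ} (giving $\stick_t(\xi)\in Z^{\alpha'}\subset\H^{\alpha'}$ directly) rather than Proposition~\ref{PROP:REGSTICK} plus a H\"older--Sobolev embedding, and that you spell out the discontinuity and measurability arguments in more detail than the paper does.
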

\begin{proof}
By Proposition \ref{PROP:stickZ}, we have that for every $t \ge 0$, $\stick_t(\xi) \in Z^{\alpha'} \subset \H^{\alpha'}$. Therefore, by the decomposition \eqref{soldef} and the fact that $\v(t) \in \H^1\subset \H^{\alpha'}$ for every $t \ge 0$, we have that a.s.
\begin{equation*}
\Phi_t(\u_0,\xi) \in \H^{\alpha'} \iff S(t)\u_0 \in \H^{\alpha'}.
\end{equation*}
Since the operator $S(t)$ is invertible on $\H^{\s}$ for every $\s\in\R$, this implies that 
\begin{equation*}
\Phi_t(\u_0,\xi) \in \H^{\alpha'} \iff S(t)\u_0 \in \H^{\alpha'} \iff \u_0 \in \H^{\alpha'} \quad \text{a.s.},
\end{equation*}
or equivalently, 
\begin{equation*}
\Pt{t}F(\u_0) = \E[F(\Phi_t(\u_0,\xi))] = F(\u_0).
\end{equation*}
As discussed in Remark \ref{Xnecessity}, $\X \not \subseteq \H^{\alpha'}$, and $\H^{\alpha'} \cap \X$ contains the trigonometric polynomials. Therefore, $\H^{\alpha'} \cap \X$ is a nontrivial dense subspace of $\X$, so $F$ is discontinuous everywhere.
\end{proof}

Therefore, in order to show uniqueness of the invariant measure, we follow the strategy introduced by Hairer and Mattingly in \cite{HM1}, and further developed by the same authors and Scheutzow in \cite{HM3}. We also refer to \cite{BW} for a general approach in the abstract setting. 
While it is possible for the proof of Theorem \ref{THM:erg} to be included in the framework of \cite{BW}, we believe that pursuing a direct proof is both simpler and would be better for the exposition. For the same reason, we will try to keep the proof as self-contained as possible.

\subsection{Abstract definitions}
We recall the following definitions from \cite[Section 3.1]{HM1}. 
Let $\mathcal{X}$ be a Polish space (i.e. complete, separable and metrizable). A pseudo-metric on $\mathcal{X}$ is a map $d: \mathcal{X}\times \mathcal{X} \mapsto \R$ such that $d(x,x)=0$ and $d$ satisfies the triangle inequality. For a sequence $\{d_n\}_{n \in \N}$ of (pseudo)-metrics on $\mathcal{X}$, we say that $\{d_n\}_{n \in \N}$ is totally separating if for every $x,y \in \mathcal{X}$ with $x \neq y$,
\begin{equation*}
\lim_{n \to \infty} d_n(x,y) = 1.
\end{equation*}
Given a pseudo-metric $d$, we define the following seminorm on the set of $d-$Lipschitz continuous functions from a Polish space $\mathcal{X}$ to $\R$: 
\begin{align}
\| F\|_{d}=\sup_{ \substack{x,y\in \mathcal{X} \\ x\neq y}} \frac{ |F(x)-F(y)|}{d(x,y)}.  
\label{lipd}
\end{align} 
This defines a dual seminorm on the space of finite signed Borel measures on $\mathcal{X}$ with vanishing integral by 
\begin{align}
\| \nu \|_{d}=\sup_{\| F\|_{d}\leq 1} \int_{\mathcal{X}}F(x)d\nu(x). \label{measd}
\end{align}
In the following, we will consider $\{d_{n}\}_{n\in \N}$ to be the following totally separating system of pseudo-metrics on $\mathcal{X}$:
\begin{align}
d_{n}(x,y)=1\wedge n \|x-y\|_{\mathcal{X}}. \label{dn}
\end{align}
Notice that for any $x,y\in \mathcal{X}$ and $n\in \N$, $d_{n}(x,y)\leq 1$.

When we work with the metrics $d_n$, we can further restrict the space of test functions $F$ in the definition of the norm in \eqref{measd}.
\begin{lemma}\label{LEM:testLinfty} Fix $n\in \N$ and let $\nu$ be a finite signed Borel measure on $\mathcal{X}$ with vanishing integral. Then 
\begin{align}
\| \nu \|_{d_{n}}=\sup_{ \substack{ \|F\|_{d_{n}}\leq 1 \\ \|F\|_{L^{\infty}}\leq \frac{1}{2}}}\int_{\mathcal{X}} F(x)d\nu(x), \label{measdn12}
\end{align}
where the norm on the left hand side of \eqref{measdn12} is defined in \eqref{measd}.
\end{lemma}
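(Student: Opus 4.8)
The plan is to establish the two inequalities in \eqref{measdn12} separately. The inequality ``$\geq$'' is immediate: on the right-hand side of \eqref{measdn12} the supremum is taken over a subclass of the test functions admissible in the definition \eqref{measd} of $\|\nu\|_{d_n}$ (namely those additionally satisfying $\|F\|_{L^\infty}\leq\frac12$), so the supremum can only be smaller.

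For the reverse inequality ``$\leq$'', the key observation is that any $F$ with $\|F\|_{d_n}\leq 1$ automatically has oscillation at most $1$. Indeed, from \eqref{dn} we have $d_n(x,y) = 1\wedge n\|x-y\|_{\mathcal{X}} \leq 1$ for all $x,y\in\mathcal{X}$, hence $|F(x)-F(y)|\leq 1$ for all $x,y$; in particular $F$ is bounded with $\sup_{\mathcal{X}}F - \inf_{\mathcal{X}}F \leq 1$. (Note that such an $F$ is also $n$-Lipschitz with respect to $\|\cdot\|_{\mathcal{X}}$, hence continuous, Borel, and bounded, so it is $\nu$-integrable and the integrals make sense.) Given such an $F$, I would set $c := \frac12\big(\sup_{\mathcal{X}}F + \inf_{\mathcal{X}}F\big)$, which is finite by the oscillation bound, and put $\widetilde{F} := F - c$. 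Then $\|\widetilde{F}\|_{L^\infty}\leq \frac12(\sup_{\mathcal{X}}F - \inf_{\mathcal{X}}F)\leq \frac12$, while the Lipschitz seminorm $\|\cdot\|_{d_n}$ is translation invariant so $\|\widetilde{F}\|_{d_n} = \|F\|_{d_n}\leq 1$. Crucially, since $\nu$ has vanishing integral, $\int_{\mathcal{X}}\widetilde{F}\,d\nu = \int_{\mathcal{X}}F\,d\nu - c\,\nu(\mathcal{X}) = \int_{\mathcal{X}}F\,d\nu$. Thus $\int_{\mathcal{X}}F\,d\nu$ equals $\int_{\mathcal{X}}\widetilde{F}\,d\nu$, which is bounded above by the right-hand side of \eqref{measdn12}; taking the supremum over all admissible $F$ yields $\|\nu\|_{d_n}\leq$ RHS, and combining the two inequalities gives \eqref{measdn12}.

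There is no real obstacle here; the only point deserving (minor) attention is to check that $\sup_{\mathcal{X}}F$ and $\inf_{\mathcal{X}}F$ are finite before forming the constant $c$, which follows at once from the oscillation bound together with the fact that $F$ attains at least one real value. Everything else is a routine use of translation invariance of the Lipschitz seminorm and the hypothesis $\nu(\mathcal{X})=0$.
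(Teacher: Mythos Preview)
Your proof is correct and follows essentially the same approach as the paper's: both exploit the fact that $d_n\le 1$ forces any $F$ with $\|F\|_{d_n}\le 1$ to have oscillation at most $1$, and then shift $F$ by a constant (using $\nu(\mathcal X)=0$) to land in the restricted class. The only cosmetic difference is that the paper does this in two stages---first subtracting $F(x_0)$ to reach $\|F\|_{L^\infty}\le 1$, then subtracting the midrange $\tfrac12(\sup F+\inf F)$ to reach $\|F\|_{L^\infty}\le\tfrac12$---whereas you go directly to the midrange shift in one step.
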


\begin{proof}
We first show that 
\begin{align}
\sup_{\| F\|_{d_n}\leq 1} \int_{\mathcal{X}}F(x)d\nu(x) = \sup_{ \substack{ \|F\|_{d_{n}}\leq 1 \\ \|F\|_{L^{\infty}}\leq 1}}\int_{\mathcal{X}} F(x)d\nu(x). \label{sup1}
\end{align}
It suffices to show 
\begin{align}
\sup_{\| F\|_{d_n}\leq 1} \int_{\mathcal{X}}F(x)d\nu(x) \leq  \sup_{ \substack{ \|F\|_{d_{n}}\leq 1 \\ \|F\|_{L^{\infty}}\leq 1}} \int_{\mathcal{X}} F(x)d\nu(x). \label{sup11}
\end{align}
Let $F$ be such that $\|F\|_{d_n} \leq 1$. Given any $x_0 \in \mathcal{X}$, set $\wt{F}(x):=F(x)-F(x_0)$. Then, for $x\neq x_0$, 
\begin{align*}
|\wt{F}(x)| =|F(x)-F(x_0)| \leq \|F\|_{d_{n}}d_{n}(x,x_0) \leq 1,
\end{align*}
since $d_{n}(x,x_0)\leq 1$. Therefore, $\| \wt{F}\|_{L^{\infty}}\leq 1$. Since $\nu$ has vanishing integral, 
\begin{align*}
\int_{\mathcal{X}}F(x)d\nu(x) = \int_{\mathcal{X}} \wt{F}(x)d\nu(x) \leq \sup_{ \substack{ \|\wt{F}\|_{d_{n}}\leq 1 \\ \|\wt{F}\|_{L^{\infty}}\leq 1}} \int_{\mathcal{X}} \wt{F}(x)d\nu(x).
\end{align*} 
As this is true for any such $F$, this proves \eqref{sup11} and so also \eqref{sup1}. 
In order to show \eqref{measdn12}, it suffices to show 
\begin{align}
\sup_{ \substack{ \|F\|_{d_{n}}\leq 1 \\ \|F\|_{L^{\infty}}\leq 1}}\int_{\mathcal{X}} F(x)d\nu(x)\leq \sup_{ \substack{ \|F\|_{d_{n}}\leq 1 \\ \|F\|_{L^{\infty}}\leq \frac{1}{2}}}\int_{\mathcal{X}} F(x)d\nu(x). \label{sup12}
\end{align}
To this end, let $F$ be such that $\|F\|_{d_{n}}\leq 1$ and $\|F\|_{L^{\infty}}\leq 1$.
We have that 
\begin{align*}
\sup F - \inf F = \sup_{x,y \in \mathcal X} F(x) - F(y) \le \sup_{x,y \in \mathcal X} \|F\|_{d_n} d_n(x,y) \le 1.
\end{align*}

Now, $\wt{F}(x):=F(x)-\frac{\sup F + \inf F}{2}$ satisfies $\|\wt{F}\|_{d_n}\leq 1$ and $\| \wt{F}\|_{L^{\infty}}\leq \frac{1}{2}$. We show the latter inequality. Indeed, since $\sup F -\inf F\leq 1$, we have 
\begin{align*}
\wt{F}(x) \geq \inf F - \frac{\sup F +\inf F}{2}= -\frac{( \sup F -\inf F)}{2}\geq -\frac{1}{2}
\end{align*}
and 
\begin{align*}
\wt{F}(x)\leq \sup F -\frac{ \sup F +\inf F}{2} =\frac{\sup F -\inf F}{2} \leq \frac{1}{2}.
\end{align*}
Finally, \eqref{sup12} follows from $\int_{\mathcal{X}} Fd\nu = \int_{\mathcal{X}} \wt{F} d\nu$.
\end{proof}

Given $\mu_1$ and $\mu_2$, two positive finite Borel measures on $\mathcal{X}$ of equal mass, we denote by $\Pi(\mu_1,\mu_2)$ the set of positive measures on $\mathcal{X}^2$ with marginals $\mu_1$ and $\mu_2$. We define
\begin{align*}
\vvv\mu_1-\mu_2\vvv_{d} = \inf_{\pi \in \Pi(\mu_1,\mu_2)} \int_{\mathcal{X}^2} d(x,y) d\pi(x,y).
\end{align*}
\begin{lemma}\label{LEM:MKDuality}
Let $d$ be a continuous pseudo-metric on a Polish space $\mathcal{X}$ and let $\mu_1$ and $\mu_2$ be two positive measures on $\mathcal{X}$ with equal mass. Then, $\| \mu_1-\mu_2\|_{d}=\vvv \mu_1-\mu_2\vvv_{d}.$
\end{lemma}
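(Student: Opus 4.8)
The plan is to establish the two inequalities $\|\mu_1-\mu_2\|_d \le \vvv\mu_1-\mu_2\vvv_d$ and $\vvv\mu_1-\mu_2\vvv_d \le \|\mu_1-\mu_2\|_d$ separately. If the common mass of $\mu_1,\mu_2$ is zero both sides vanish, so after normalising we may assume $\mu_1,\mu_2$ are probability measures. For the first inequality, fix a $d$-Lipschitz $F$ with $\|F\|_d\le 1$ (replacing $F$ by $F-F(x_0)$ we may take $F$ bounded, since this does not change $\int F\,d(\mu_1-\mu_2)$) and a coupling $\pi\in\Pi(\mu_1,\mu_2)$. As the marginals of $\pi$ are $\mu_1$ and $\mu_2$,
\begin{equation*}
\int_{\mathcal X} F\,d\mu_1 - \int_{\mathcal X} F\,d\mu_2 = \int_{\mathcal X^2}\big(F(x)-F(y)\big)\,d\pi(x,y) \le \int_{\mathcal X^2} d(x,y)\,d\pi(x,y),
\end{equation*}
because $|F(x)-F(y)|\le\|F\|_d\,d(x,y)\le d(x,y)$. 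Taking the supremum over $F$ and the infimum over $\pi$ gives $\|\mu_1-\mu_2\|_d\le\vvv\mu_1-\mu_2\vvv_d$.

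The reverse inequality is the Kantorovich--Rubinstein duality and is the heart of the statement. We invoke the classical Kantorovich duality theorem (see e.g.\ Villani's monograph \emph{Optimal Transport: Old and New}) for the cost $c(x,y):=d(x,y)$: since $d$ is continuous, hence lower semicontinuous, on the Polish space $\mathcal X$,
\begin{equation*}
\inf_{\pi\in\Pi(\mu_1,\mu_2)}\int_{\mathcal X^2} d(x,y)\,d\pi(x,y) = \sup\Big(\int_{\mathcal X}\varphi\,d\mu_1 + \int_{\mathcal X}\psi\,d\mu_2\Big),
\end{equation*}
the supremum running over bounded continuous $\varphi,\psi$ with $\varphi(x)+\psi(y)\le d(x,y)$ for all $x,y$. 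Using that $d$ satisfies the triangle inequality, one reduces the dual problem to pairs $(\varphi,-\varphi)$ with $\|\varphi\|_d\le 1$: given an admissible $(\varphi,\psi)$, replacing $\psi$ by the $c$-transform $\varphi^c(y):=\inf_x\big(d(x,y)-\varphi(x)\big)$, then $\varphi$ by $\varphi^{cc}$, and finally $\varphi^c$ by $-\varphi^{cc}$, only increases the objective; moreover $\varphi^{cc}$ is $1$-Lipschitz for $d$ and, for such a function, $(\varphi^{cc})^c=-\varphi^{cc}$. Hence the right-hand side above equals $\sup_{\|\varphi\|_d\le 1}\int_{\mathcal X}\varphi\,d(\mu_1-\mu_2)=\|\mu_1-\mu_2\|_d$, which yields $\vvv\mu_1-\mu_2\vvv_d\le\|\mu_1-\mu_2\|_d$ and completes the proof.

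The main obstacle is the second step. The Kantorovich duality theorem itself is proved by a minimax / Hahn--Banach argument which we do not reproduce; we rely on its standard statement. The reduction of the dual problem to $1$-Lipschitz potentials uses, besides the triangle inequality, the fact that $\mu_1$ and $\mu_2$ have equal mass, so that additive constants in the potentials are invisible to $\mu_1-\mu_2$. A minor point is the integrability of the potentials when $d$ is unbounded; in our applications this does not arise, since $d=d_n$ is bounded by $1$ (cf.\ \eqref{dn}), and in general it is handled by a routine truncation, applying the above to $d\wedge k$ and letting $k\to\infty$ via monotone convergence.
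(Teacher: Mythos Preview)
Your proof is correct and follows the standard Kantorovich--Rubinstein argument. The paper does not give its own proof of this lemma; it simply refers to \cite[Lemma 3.3]{HM1}, noting that the proof there is based on Monge--Kantorovich duality, which is exactly the route you take.
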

\noi
See \cite[Lemma 3.3]{HM1} for a proof which is based on the Monge-Kantorovich duality. 

\begin{lemma}\label{LEM:dnlimit}
Let $\mathcal{X}$ be a Polish space and let $\{d_n\}_{n\in \N}$ be a totally separating system of pseudo-metrics for $\mathcal{X}$. Then,
\begin{align*}
\|\mu_1-\mu_2\|_{\textup{TV}}=\lim_{n\to \infty} \|\mu_1-\mu_2\|_{d_n},
\end{align*}
for any two positive measures $\mu_1$ and $\mu_2$ with equal mass on $\mathcal{X}$.
\end{lemma}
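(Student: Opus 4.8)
The plan is to combine the Monge--Kantorovich duality of Lemma~\ref{LEM:MKDuality}, which identifies $\|\mu_1-\mu_2\|_{d_n}$ with the optimal transport cost $\vvv\mu_1-\mu_2\vvv_{d_n}=\inf_{\pi\in\Pi(\mu_1,\mu_2)}\int_{\mathcal X^2}d_n\,d\pi$, with a compactness argument on the set of couplings $\Pi(\mu_1,\mu_2)$, together with the classical coupling representation of the total variation distance
\begin{equation*}
\|\mu_1-\mu_2\|_{\mathrm{TV}}=\inf_{\pi\in\Pi(\mu_1,\mu_2)}\int_{\mathcal X^2}\ind_{\{x\neq y\}}\,d\pi(x,y).
\end{equation*}
The elementary observation driving everything is that $d_n(x,y)=1\wedge n\|x-y\|_{\mathcal X}$ increases monotonically (in $n$) to $\ind_{\{x\neq y\}}$ pointwise on $\mathcal X^2$. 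Since a larger pseudo-metric yields a larger dual seminorm (the unit ball in \eqref{lipd} shrinks), $n\mapsto\|\mu_1-\mu_2\|_{d_n}$ is nondecreasing, so $L:=\lim_{n\to\infty}\|\mu_1-\mu_2\|_{d_n}$ exists and equals $\sup_n\|\mu_1-\mu_2\|_{d_n}$; it remains to show $L=\|\mu_1-\mu_2\|_{\mathrm{TV}}$.

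For the upper bound $L\le\|\mu_1-\mu_2\|_{\mathrm{TV}}$, note that $d_n\le\ind_{\{x\neq y\}}$ pointwise, so $\int d_n\,d\pi\le\int\ind_{\{x\neq y\}}\,d\pi$ for every $\pi\in\Pi(\mu_1,\mu_2)$; taking the infimum over $\pi$ and applying Lemma~\ref{LEM:MKDuality} on the left and the coupling representation of the total variation distance on the right gives $\|\mu_1-\mu_2\|_{d_n}\le\|\mu_1-\mu_2\|_{\mathrm{TV}}$ for every $n$, hence the claim. For the lower bound, after discarding the trivial case $\mu_1=\mu_2$ (and the zero-mass case), pick for each $n$ a near-optimal coupling $\pi_n\in\Pi(\mu_1,\mu_2)$ with $\int d_n\,d\pi_n\le\|\mu_1-\mu_2\|_{d_n}+\tfrac1n\le L+\tfrac1n$. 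Since $\mathcal X$ is Polish, the finite measures $\mu_1,\mu_2$ are tight (Ulam), and hence so is $\Pi(\mu_1,\mu_2)$: given $\ep>0$ and compacts $K_i$ with $\mu_i(K_i^{c})<\ep$, every $\pi\in\Pi(\mu_1,\mu_2)$ satisfies $\pi\big((K_1\times K_2)^{c}\big)<2\ep$, with $K_1\times K_2$ compact in $\mathcal X^2$. By Prokhorov's theorem extract a subsequence $\pi_{n_k}\wto\pi_\ast$; marginals are preserved under weak limits, so $\pi_\ast\in\Pi(\mu_1,\mu_2)$. Now fix $m\in\N$: for $n_k\ge m$ we have $d_m\le d_{n_k}$, hence $\int d_m\,d\pi_{n_k}\le\int d_{n_k}\,d\pi_{n_k}\le L+\tfrac1{n_k}$, and since $d_m$ is bounded and continuous on $\mathcal X^2$, letting $k\to\infty$ gives $\int d_m\,d\pi_\ast\le L$. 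Letting $m\to\infty$ and using monotone convergence ($d_m\uparrow\ind_{\{x\neq y\}}$) yields $\pi_\ast(\{x\neq y\})\le L$; the coupling inequality $\|\mu_1-\mu_2\|_{\mathrm{TV}}\le\pi(\{x\neq y\})$ (valid for every $\pi\in\Pi(\mu_1,\mu_2)$) then gives $\|\mu_1-\mu_2\|_{\mathrm{TV}}\le L$. Combining the two bounds completes the proof.

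The one point that needs genuine care --- and the reason the argument is not a one-liner --- is the lower bound: one cannot apply Lemma~\ref{LEM:MKDuality} directly with the discrete pseudo-metric $\ind_{\{x\neq y\}}$, since it fails to be continuous, so the duality must be transported from the continuous $d_n$ to their monotone limit via the tightness/weak-compactness step above. Everything else (monotonicity of the dual seminorms in the pseudo-metric, tightness of $\Pi(\mu_1,\mu_2)$ on a Polish space, stability of marginals under weak convergence, monotone convergence, and the coupling inequality) is entirely standard.
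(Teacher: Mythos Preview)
Your argument is correct and is essentially the standard coupling/compactness proof underlying \cite[Corollary~3.5]{HM1}; the paper itself does not give a proof here but simply defers to that reference, so you are supplying strictly more than the paper does. One small caveat: the lemma is stated for an \emph{arbitrary} totally separating system of pseudo-metrics, whereas your proof uses the monotonicity $d_m\le d_{n}$ for $m\le n$ and the continuity of each $d_n$, both of which are special features of the concrete family $d_n(x,y)=1\wedge n\|x-y\|_{\mathcal X}$ from~\eqref{dn}. Since this is the only family ever used in the paper, your restriction is harmless for the applications (in particular for the proof of Theorem~\ref{THM:erg}); if you wish to match the generality of the stated lemma, the same scheme goes through once one assumes the $d_n$ are continuous and, say, uniformly bounded, replacing the monotone-convergence step by dominated convergence together with the pointwise limit $d_n\to\ind_{\{x\neq y\}}$ guaranteed by the totally separating hypothesis.
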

\noi
Similarly, see \cite[Corollary 3.5]{HM1} for the proof of this statement.

\subsection{Construction of the Girsanov shift}

In the following, we define $p_t$ to be the heat kernel at time $t$ on the torus $\T^2$, i.e.\ 
\begin{align*}
(e^{t \Delta} f) (x) = \int_{\T^2} p_t(x-y) f(y) d y.
\end{align*}
We recall the following regularisation effect of the heat kernel.
\begin{lemma}\label{LEM:convdiff}
Let $1\le p\le \infty$ and $0 \le \ta \le 2$. We have 
\begin{align*}
\| f-f\ast \rho_{\ep}\|_{L^{p}(\T^2)} \leq C\ep^{\frac \ta 2}\|f\|_{W^{\ta ,p}(\T^2)}, 
\end{align*}
where $C = C(\ta) >0$ is independent of $\ep$. Moreover, for $\ta \ge 0$, we have 
\begin{equation} \label{smoothing}
\| f \ast \rho_\eps\|_{L^p (\T^2)} \les \ep^{-\frac \ta 2} \| f \|_{W^{-\ta, p} (\T^2)}.
\end{equation}
\end{lemma}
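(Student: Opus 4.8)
The plan is to reduce both inequalities to two elementary facts about the heat semigroup $e^{t\Delta}$ on $\T^2$ (note $f\ast\rho_\ep=e^{\ep\Delta}f$): (a) $e^{t\Delta}$ is a contraction on $L^p(\T^2)$ for all $1\le p\le\infty$ and $t\ge 0$, which follows from Young's inequality since $p_t\ge 0$ and $\int_{\T^2}p_t=1$; and (b) the smoothing estimate
\begin{equation}\label{aux-smoothing}
\|\jb{\nb}^\sigma e^{t\Delta}g\|_{L^p(\T^2)}\les_\sigma(1\wedge t)^{-\sigma/2}\|g\|_{L^p(\T^2)},\qquad \sigma\ge 0,\ 1\le p\le\infty.
\end{equation}

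Granting \eqref{aux-smoothing}, the two bounds follow quickly. The first inequality is trivial for $\ta=0$, and for $0<\ta\le 2$ and $\ep\le 1$ I would combine $\text{Id}-e^{\ep\Delta}=\int_0^\ep(-\Delta)e^{s\Delta}\,ds$ with the factorisation $(-\Delta)e^{s\Delta}=\big(\text{Id}-\tfrac34\jb{\nb}^{-2}\big)\circ\big(\jb{\nb}^{2-\ta}e^{s\Delta}\big)\circ\jb{\nb}^{\ta}$ (legitimate since Fourier multipliers commute), in which $\text{Id}-\tfrac34\jb{\nb}^{-2}$ is bounded on every $L^p$ because its kernel is $\delta-\tfrac34 G$ with $G$ the nonnegative $L^1(\T^2)$ Green's function of $\tfrac34-\Delta$, and $\|\jb{\nb}^{2-\ta}e^{s\Delta}\|_{L^p\to L^p}\les s^{-(2-\ta)/2}$ by \eqref{aux-smoothing}; since $\int_0^\ep s^{-(2-\ta)/2}\,ds\les_\ta\ep^{\ta/2}$ when $\ta>0$, this gives $\|f-f\ast\rho_\ep\|_{L^p}\les\ep^{\ta/2}\|f\|_{W^{\ta,p}}$, and for $\ep\ge 1$ one simply uses $\|f-f\ast\rho_\ep\|_{L^p}\le 2\|f\|_{L^p}\le 2\ep^{\ta/2}\|f\|_{W^{\ta,p}}$. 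For the second inequality \eqref{smoothing} (which is meaningful, and used, only for bounded $\ep$), write $f=\jb{\nb}^{\ta}g$ with $g:=\jb{\nb}^{-\ta}f$, so $\|g\|_{L^p}=\|f\|_{W^{-\ta,p}}$; then $f\ast\rho_\ep=e^{\ep\Delta}\jb{\nb}^{\ta}g=(\jb{\nb}^{\ta}e^{\ep\Delta})g$, and \eqref{aux-smoothing} yields $\|f\ast\rho_\ep\|_{L^p}\les\ep^{-\ta/2}\|f\|_{W^{-\ta,p}}$.

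To prove \eqref{aux-smoothing}, observe that $\jb{\nb}^\sigma e^{t\Delta}$ is convolution with the kernel $k_{t,\sigma}:=\jb{\nb}^\sigma p_t$, so by Young's inequality it suffices to show $\|k_{t,\sigma}\|_{L^1(\T^2)}\les_\sigma(1\wedge t)^{-\sigma/2}$ — this single kernel bound gives \eqref{aux-smoothing} on all $L^p$ at once, endpoints included. For $\sigma=2m\in 2\N$ one uses $(-\Delta)^j p_t=(-1)^j\partial_t^j p_t$, so that $\jb{\nb}^{2m}p_t=(\tfrac34-\Delta)^m p_t$ is a fixed linear combination of $\partial_t^j p_t$, $0\le j\le m$, together with the Gaussian bound $\|\partial_t^j p_t\|_{L^1(\T^2)}\le\|\partial_t^j G_t\|_{L^1(\R^2)}\les_j t^{-j}$, obtained by periodising the Euclidean heat kernel $G_t$ and using $|\partial_t^j G_t(x)|\les_j t^{-1-j}e^{-|x|^2/8t}$. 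For $0<\sigma<2$ I would instead apply the Balakrishnan subordination formula $\jb{\nb}^\sigma=(\tfrac34-\Delta)^{\sigma/2}=c_\sigma\int_0^\infty\lambda^{-1-\sigma/2}\big(\text{Id}-e^{-3\lambda/4}e^{\lambda\Delta}\big)\,d\lambda$ to $p_t=e^{t\Delta}\delta$, getting $k_{t,\sigma}=c_\sigma\int_0^\infty\lambda^{-1-\sigma/2}(p_t-e^{-3\lambda/4}p_{t+\lambda})\,d\lambda$; the integrand is controlled via $\|p_t-e^{-3\lambda/4}p_{t+\lambda}\|_{L^1}\le\|\int_t^{t+\lambda}\partial_s p_s\,ds\|_{L^1}+(1-e^{-3\lambda/4})\les\min(1,\lambda/t)+\min(1,\lambda)$, using $\|\partial_s p_s\|_{L^1}\les s^{-1}$ and $\|p_s\|_{L^1}=1$, and inserting this into the $\lambda$-integral gives $\|k_{t,\sigma}\|_{L^1}\les_\sigma t^{-\sigma/2}+C_\sigma\les_\sigma(1\wedge t)^{-\sigma/2}$. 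A general $\sigma\ge 2$ reduces to the two previous cases through $k_{t,\sigma}=(\jb{\nb}^{2m}p_{t/2})\ast(\jb{\nb}^{\sigma-2m}p_{t/2})$ with $m$ chosen so that $0\le\sigma-2m<2$, multiplying the two $L^1$ bounds.

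The contraction property, the commuting-multiplier bookkeeping, and the Gaussian estimates on $\partial_t^j G_t$ are all standard; the point that genuinely needs care — and which I expect to be the main obstacle — is getting the \emph{sharp} exponent $(1\wedge t)^{-\sigma/2}$ in \eqref{aux-smoothing} uniformly for every $1\le p\le\infty$. A naive Fourier-multiplier approach (pointwise domination of the symbol $\jb{2\pi n}^\sigma e^{-4\pi^2 t|n|^2}$, or a Mikhlin--H\"ormander type theorem) either loses a power of $t$ when converted to an $L^1$ kernel bound or does not reach the endpoints $p\in\{1,\infty\}$; the subordination formula, together with the explicit control of $\|p_t-p_{t+\lambda}\|_{L^1}$, is precisely what handles this cleanly while keeping the implied constants independent of $\ep$ (equivalently $t$) in the relevant range. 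Much of this is, of course, classical heat-semigroup theory and could alternatively be quoted from the literature.
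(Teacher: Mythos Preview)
Your argument is correct and complete. However, there is nothing to compare against: the paper does not prove this lemma. It is introduced with the sentence ``We recall the following regularisation effect of the heat kernel'' and then simply stated, with the proof omitted as standard. So you have supplied a full proof where the paper gives none.

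A few remarks on your write-up. The reduction of both inequalities to the single kernel bound $\|\jb{\nb}^\sigma p_t\|_{L^1(\T^2)}\les_\sigma(1\wedge t)^{-\sigma/2}$ is clean and is exactly the right way to reach the endpoints $p\in\{1,\infty\}$; the Balakrishnan subordination argument for $0<\sigma<2$ is correct, and your splitting of the $\lambda$-integral at $\lambda=t$ (for the $\min(1,\lambda/t)$ term) and at $\lambda=1$ (for the $\min(1,\lambda)$ term) gives the sharp power. The factorisation $(-\Delta)=(\text{Id}-\tfrac34\jb{\nb}^{-2})\jb{\nb}^{2-\ta}\jb{\nb}^\ta$ uses precisely the paper's convention $\jb{\nb}^2=\tfrac34-\Delta$, so it is consistent with the ambient notation. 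Your observation that \eqref{smoothing} is only needed (and only stated) for bounded $\ep$ is also accurate: in the paper $\ep(\w)$ is defined in \eqref{epsilon} with $C\ge1$, so $\ep(\w)\le1$ throughout the applications in Proposition~\ref{PROP:shift} and Proposition~\ref{LEM:h}.
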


Our main goal in this Subsection is to prove Propostion~\ref{PROP:shift} and Proposition~\ref{LEM:h}, below. To this end, we first prove an auxiliary estimate. We define 
\begin{align}
\Q(\u,\v):= 3((\pi_1 \u)^2-\g)+3\pi_1 \u\pi_1 \v+(\pi_1 \v)^2, \label{Qdefn}
\end{align}
so that 
\begin{align*}
\NN_{\g}(\u+\v)-\NN_{\g}(\u)=\Q(\u,\v)\pi_1 \v.
\end{align*}

\begin{lemma}\label{LEM:epswelldef} Let $s>0$, $\g\in \R$, $\u_1^{0}, \u_2^{0}\in \X$ and $\u_1$ be the global solution to \eqref{duhamel} with $\u_1 \vert_{t=0}=\u_1^{0}$ and 
of the form 
\begin{align}
\u_1 (t) = S(t) \u_1^{0}+\stick_{t}(\xi)+\v_1(t; \u_1^{0}, \xi). \label{u1expQ}
\end{align}
Let $\mathbf{z} \in L^p(\O; \H^1)$.
Then, there exists $C=C(\|\u_1^{0}\|_{\X}, \|\u_2^{0}\|_{\X})>0$ such that for any $1\leq p<\infty$, we have
\begin{align}
\E\big[ \| \Q( \u_1(t), \mathbf{z}+S(t)\u_2^{0})\|_{W^{\al,\frac{2}{1-\al}}}^{p}   \big]^{\frac{1}{p}}\leq C+\E\big[ \| \mathbf{z}\|_{\H^1}^{p} \big]^{\frac{2}{p}}, \label{Qbound}
\end{align}
uniformly in $t \ge 0$.
\end{lemma}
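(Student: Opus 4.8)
The plan is to expand $\Q(\u_1(t),\mathbf{z}+S(t)\u_2^{0})$ into a finite sum of bilinear terms and to estimate each of them in $W^{\al,\frac{2}{1-\al}}$ by the fractional Leibniz inequality of Lemma~\ref{LEM:fraclieb}. First I would write $\pi_1\u_1(t)=\pi_1 S(t)\u_1^{0}+\pi_1\stick_{t}(\xi)+v_1$ with $v_1:=\pi_1\v_1(t;\u_1^{0},\xi)$, and $\pi_1(\mathbf{z}+S(t)\u_2^{0})=\pi_1\mathbf{z}+\pi_1 S(t)\u_2^{0}$. By the definition \eqref{Qdefn} of $\Q$, the quantity $\Q(\u_1(t),\mathbf{z}+S(t)\u_2^{0})$ is then a linear combination of the constant $\g$, of the quadratic monomials in the four functions $\pi_1 S(t)\u_1^{0}$, $\pi_1\stick_{t}(\xi)$, $v_1$, $\pi_1 S(t)\u_2^{0}$, of the products of $\pi_1\mathbf{z}$ with each of these four, and of $(\pi_1\mathbf{z})^{2}$. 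The first two families of terms will be ``good'' (their $L^{p}(\O)$-moments will be bounded by a constant depending only on $\|\u_1^{0}\|_{\X},\|\u_2^{0}\|_{\X}$), while the last two carry the dependence on $\mathbf{z}$.

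The core of the argument is a single product estimate: since $0<\al<\frac13$, we have $\frac{4}{1-\al}\le\frac{2}{\al}$, so Lemma~\ref{LEM:fraclieb} with all four H\"older exponents equal to $\frac{4}{1-\al}$, together with the trivial embedding $W^{\al,\frac{4}{1-\al}}\embeds L^{\frac{4}{1-\al}}$, gives
\begin{equation*}
\|fg\|_{W^{\al,\frac{2}{1-\al}}}\les\|f\|_{W^{\al,\frac{4}{1-\al}}}\|g\|_{W^{\al,\frac{4}{1-\al}}}.
\end{equation*}
Into this I would feed three families of bounds on the individual factors: the two-dimensional Sobolev embedding $\H^{1}\embeds W^{\al,\frac{2}{\al}}\embeds W^{\al,\frac{4}{1-\al}}$ (valid precisely because $\al\le\frac13$), which controls $v_1$ and $\pi_1\mathbf{z}$ by $\|\v_1(t;\u_1^{0},\xi)\|_{\H^{1}}$ and $\|\mathbf{z}\|_{\H^{1}}$ respectively; the definition \eqref{Xalpha} of the norm on $\X$, which gives $\|\pi_1 S(t)\u_i^{0}\|_{W^{\al,\frac{4}{1-\al}}}\les\|S(t)\u_i^{0}\|_{\W^{\al,\frac{2}{\al}}}\le e^{-t/8}\|\u_i^{0}\|_{\X}$; and the embedding $C^{\al}\embeds W^{\al,\frac{4}{1-\al}}$, which handles $\pi_1\stick_{t}(\xi)\in C^{\al}$. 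Setting $G:=1+\|\u_1^{0}\|_{\X}+\|\u_2^{0}\|_{\X}+\|\stick_{t}(\xi)\|_{Z^{\al}}+\|\v_1(t;\u_1^{0},\xi)\|_{\H^{1}}$, applying the product estimate to each monomial in the expansion yields the bound
\begin{equation*}
\|\Q(\u_1(t),\mathbf{z}+S(t)\u_2^{0})\|_{W^{\al,\frac{2}{1-\al}}}\les G^{2}+G\,\|\mathbf{z}\|_{\H^{1}}+\|\mathbf{z}\|_{\H^{1}}^{2},
\end{equation*}
valid for (almost) every $\o\in\O$.

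Finally I would take $L^{p}(\O)$-norms: by Minkowski's, H\"older's and Young's inequalities the estimate reduces to bounding $\E[G^{q}]$ for $q\les p$ uniformly in $t\ge0$, which follows by combining the $t$-uniform moment bound \eqref{momentsstick} for $\|\stick_{t}(\xi)\|_{Z^{\al}}$ with the moment bound \eqref{vmoments} for $\|\v_1(t;\u_1^{0},\xi)\|_{\H^{1}}$, the latter being uniform in $t$ because $\eta(t;\|\u_1^{0}\|_{\X})\le e^{32C\|\u_1^{0}\|_{\X}}+1$ for every $t\ge0$. Collecting terms produces a constant $C=C(\|\u_1^{0}\|_{\X},\|\u_2^{0}\|_{\X},p)$, yielding \eqref{Qbound}. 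I expect the only genuine obstacle to be the low regularity: because $\H^{1}\not\embeds L^{\infty}$ on $\T^{2}$, none of the bilinear factors may be placed in $L^{\infty}$, so each product must be estimated by distributing both the $\al$ smoothness and the integrability between the two factors, and requiring that both factors still land in $W^{\al,\frac{4}{1-\al}}$ is precisely what fixes the threshold at $\al<\frac13$ (equivalently $\frac{4}{1-\al}<6$, consistent with the embedding $\H^{1}\embeds L^{6}$ highlighted in the paper). Verifying the finitely many exponent relations needed in Lemma~\ref{LEM:fraclieb} is then routine.
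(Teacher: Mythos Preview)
Your proposal is correct and follows essentially the same route as the paper: expand $\Q$ via \eqref{u1expQ}, control each bilinear piece in $W^{\al,\frac{2}{1-\al}}$ by the fractional Leibniz rule, and then invoke \eqref{momentsstick} and \eqref{vmoments} to take expectations. The only cosmetic difference is that the paper uses the asymmetric H\"older splitting $\frac{1-\al}{2}=\frac{\al}{2}+\frac{1-2\al}{2}$ (yielding four tailored product estimates \eqref{basic1}--\eqref{basic4} and keeping the Wick square $\wick{(\stick_t)^2}_{\g}$ grouped), whereas you use the symmetric splitting $\frac{1-\al}{2}=\frac{1-\al}{4}+\frac{1-\al}{4}$ throughout; both are valid since $0<\al<\tfrac13$.
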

\begin{proof} 
We begin by proving some basic, but useful, estimates. Given $\fb \in \X$ and $\gb, \gb_1, \gb_2\in \H^1$, we have
\begin{align} 
\| (\pi_1 S(t) \fb)^2 \|_{W^{\al, \frac{2}{1-\al}}} & \les e^{-\frac{t}{4}} \|\fb\|_{\X}^{2}, \label{basic1}\\
\| (\pi_1 S(t)\fb) \pi_1 \gb\|_{W^{\al, \frac{2}{1-\al}}}& \les e^{-\frac{t}{8}}\|\fb\|_{\X}\| \gb\|_{\H^1},\label{basic2} \\
\| (\pi_1 S(t)\fb)\pi_1 \stick_{t}(\xi)\|_{W^{\al,\frac{2}{1-\al}}}& \les e^{-\frac{t}{8}}\|\fb\|_{\X}\| \stick_{t}(\xi)\|_{\W^{\al, \frac{2}{1-2\al}}}, \label{basic3}\\
\| \pi_1 \gb_1 \pi_1 \gb_2\|_{W^{\al,\frac{2}{1-\al}}}& \les \|\gb_1\|_{\H^1}\|\gb_2\|_{\H^1}.\label{basic4}
\end{align}
for any $t>0$.
Using \eqref{fraclieb}, H\"{o}lder's inequality (since $\al<\frac13$) and \eqref{Xalpha}, we have
\begin{align*}
\| (\pi_1 S(t) \fb)^2 \|_{W^{\al, \frac{2}{1-\al}}} \les \| \jb{\nb}^{\al}\pi_1 S(t) \fb\|_{L^{\frac{2}{\al}}}\|\pi_1 S(t)\fb\|_{L^{\frac{2}{1-2\al}}} \les e^{-\frac{t}{4}}\|\fb\|_{\X}^{2}.
\end{align*}
This shows \eqref{basic1}. To show \eqref{basic2} and \eqref{basic4}, we proceed in the same way, recalling the embedding $\H^1 \embeds \X$ (see Lemma \ref{LEM:Hcompact}).
 Finally, for \eqref{basic3}, we use \eqref{fraclieb} and Sobolev embedding to obtain
\begin{align*}
&\phantom{\les\ }\| (\pi_1 S(t)\fb)\pi_1 \stick_{t}(\xi)\|_{W^{\al,\frac{2}{1-\al}}} \\
& \les \| \jb{\nb}^{\al}\pi_1 S(t) \fb\|_{L^{\frac{2}{\al}}}\|\pi_1 \stick_t(\xi)\|_{L^{\frac{2}{1-2\al}}} + \| \pi_1 S(t) \fb\|_{L^{\frac{2}{\al}}}\|\jb{\nb}^{\al} \pi_1 \stick_t(\xi)\|_{L^{\frac{2}{1-2\al}}} \\
&\les \| \pi_1 S(t) \fb\|_{W^{\al, \frac{2}{\al}}}\|\pi_1 \stick_t(\xi)\|_{W^{\al, \frac{2}{1-2\al}}} \\
& \les e^{-\frac{t}{8}}\|\fb\|_{\X}\| \stick_{t}(\xi)\|_{\W^{\al, \frac{2}{1-2\al}}}.
\end{align*}
It follows from \eqref{Qdefn}, \eqref{u1expQ}, \eqref{basic1}, \eqref{basic2}, \eqref{basic3}, \eqref{basic4}, 
\begin{align*}
 \| \Q( \u_1(t), \mathbf{z}+S(t)\u_2^{0})\|_{W^{\al,\frac{2}{1-\al}}} & \les \|\mathbf{z}\|_{\H^1}^2+\|\v_1(t)\|_{\H^1}^{2}+e^{-\frac{t}{4}}\big( \|\u_1^{0}\|_{\X}+\|\u_2^{0}\|_{\X}\big) \\
 &\hphantom{XXXXXXX}+ \|\stick_{t}(\xi)\|_{\W^{\al,6}}^{2}+\|\wick{(\stick_{t}(\xi))^{2}}_{\g}\|_{\W^{\al,\frac{2}{1-\al}}}.
\end{align*}
Now, \eqref{Qbound} follows from the above estimate combined with \eqref{momentsstick} and \eqref{vmoments}.
\end{proof}

\begin{proposition}\label{PROP:shift}
Let $s>0$, $\g \in \R$ and $\u_1^0, \u_2^0\in \X$, and $\u_1$ be the global solution to \eqref{duhamel} with $\u_1 \vert_{t=0}=\u_1^0$ 
of the form 
\begin{align}
\u_1 (t) = S(t) \u_1^0+\stick_{t}(\xi)+\v_1(t; \u_1^{0}, \xi). \label{u1inw}
\end{align} Let $\w^0\in \H^1$.
Then, the equation 
\begin{align}
\begin{split}
&\w(t)\\
 &=S(t)\w^{0}-\int_{0}^{t} S(t-t') \vec{0}{ \Q(\u_1, \w+S(t)(\u_2^0-\u_1^0)) (\pi_1 \w+\pi_1 S(t)(\u_2^{0}-\u_1^{0}))}dt' \\
 &+\int_{0}^{t} S(t-t') \vec{0}{\big(\Q(\u_1, \w+S(t)(\u_2^{0}-\u_1^{0})) \ast \rho_{\ep(\w)}\big)( \pi_1\w +\pi_1 S(t)(\u_2^{0}-\u_1^{0})\ast \rho_{\ep(\w)})}dt',
 \end{split}
 \label{weqn}
\end{align}
where, for $C\ge 1$ a universal constant, 
\begin{align}
\ep (\w)(t):= C^{-\frac 2 \al}(&1 + \|\w(t)\|_{\H^1} + \|(\u_2^{0}-\u_1^{0})\|_{\X}\nonumber \\
&+ \| \u_1(t)\|_{\X} + \| \Q(\u_1(t), \w(t)+S(t)(\u_2^{0}-\u_1^{0}))\|_{W^{\al,\frac{2}{1-\al}}})^{-\frac{4}{\al}},  
\label{epsilon}
\end{align}
is almost surely globally well-posed in $\H^{1}(\T^2)$ and, moreover, we have 
\begin{align}
\|\w(t)\|_{\H^1} \les e^{-\frac{1}{16}t}\|\u_2^{0}-\u_1^{0}\|_{\X}+e^{-\frac{1}{16}t}\|\w^{0}\|_{\H^1}.
\label{wbound}
\end{align}
\end{proposition}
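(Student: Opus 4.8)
The plan is to regard \eqref{weqn} as a semilinear equation for $\w$ in $C([0,T];\H^1)$, prove local well-posedness by a contraction mapping argument, and then pass to a global solution by means of the dissipative a priori bound \eqref{wbound}, whose proof is a Gr\"onwall-type argument hinging on the precise form \eqref{epsilon} of $\ep(\w)$. It is convenient to abbreviate $\GG(t):=\Q\big(\u_1(t),\,\w(t)+S(t)(\u_2^0-\u_1^0)\big)$ and $\ep'(t):=\ep(\w)(t)$. Expanding the two integrands of \eqref{weqn} and collecting the terms containing $\pi_1\w$ separately from those containing $\pi_1 S(t)(\u_2^0-\u_1^0)$, one checks that the second component of the nonlinear term in \eqref{weqn} is
\[
N(t)=-\big(\GG(t)-\GG(t)\ast\rho_{\ep'(t)}\big)\big(\pi_1\w(t)+\pi_1 S(t)(\u_2^0-\u_1^0)\big)-\big(\GG(t)\ast\rho_{\ep'(t)}\big)\big(\pi_1 S(t)(\u_2^0-\u_1^0)-(\pi_1 S(t)(\u_2^0-\u_1^0))\ast\rho_{\ep'(t)}\big),
\]
i.e.\ a sum of products in which one factor is always a \emph{regularisation difference} $f-f\ast\rho_{\ep'}$; producing this structure is exactly the purpose of the $\ast\rho_{\ep(\w)}$ correction in \eqref{weqn}. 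For the local theory, \eqref{epsilon} forces $\ep(\w)(t)\in(0,1]$, the map $\w\mapsto\ep(\w)$ is locally Lipschitz on $C([0,T];\H^1)$ and bounded below on bounded sets, and $\GG(t)\in W^{\al,\frac2{1-\al}}$ almost surely and continuously in $t$ (this is obtained exactly as in the proof of Lemma~\ref{LEM:epswelldef}, using \eqref{basic1}--\eqref{basic4}, Lemma~\ref{LEM:fraclieb}, $\H^1\embeds\X$ from Lemma~\ref{LEM:Hcompact}, and the regularity of $\stick_t$ from Proposition~\ref{PROP:stickZ}). Combined with the smoothing bounds of Lemma~\ref{LEM:convdiff}, this makes $\w\mapsto N$ locally Lipschitz from $C([0,T];\H^1)$ into $C([0,T];L^2)$; since $\{0\}\times L^2\subset\H^1$ and $S(t-t')$ is bounded on $\H^1$ by \eqref{SonH}, a standard Picard iteration yields a unique maximal solution $\w\in C([0,T^\ast);\H^1)$.

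The crux is the bound for $\|N(t)\|_{L^2}$. Using H\"older's inequality with the exponent pair $\big(\tfrac2{1-\al},\tfrac2\al\big)$, the regularisation estimate of Lemma~\ref{LEM:convdiff} with $\ta=\al$ (taking $p=\tfrac2{1-\al}$ on the $\GG$-difference and $p=\tfrac2\al$ on the $S(t)(\u_2^0-\u_1^0)$-difference), the trivial bound $\|f\ast\rho_{\ep'}\|_{L^p}\le\|f\|_{L^p}$, and the inequalities $\|\pi_1\w\|_{W^{\al,\frac2\al}}\le\|\w\|_{\X}\les\|\w\|_{\H^1}$ and $\|\pi_1 S(t)(\u_2^0-\u_1^0)\|_{W^{\al,\frac2\al}}\le e^{-t/8}\|\u_2^0-\u_1^0\|_{\X}$ (the latter from \eqref{Xalpha}), one obtains
\[
\|N(t)\|_{L^2}\les(\ep'(t))^{\al/2}\,\|\GG(t)\|_{W^{\al,\frac2{1-\al}}}\Big(\|\w(t)\|_{\H^1}+e^{-t/8}\|\u_2^0-\u_1^0\|_{\X}\Big).
\]
By the definition \eqref{epsilon}, $(\ep'(t))^{\al/2}=C^{-1}\big(1+\|\w(t)\|_{\H^1}+\|\u_2^0-\u_1^0\|_{\X}+\|\u_1(t)\|_{\X}+\|\GG(t)\|_{W^{\al,\frac2{1-\al}}}\big)^{-2}$. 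Hence, distributing the (a priori unbounded) factors $\|\GG(t)\|_{W^{\al,\frac2{1-\al}}}$ and $\|\w(t)\|_{\H^1}$ (respectively $e^{-t/8}\|\u_2^0-\u_1^0\|_{\X}$, which up to the factor $e^{-t/8}$ is one of the summands of the denominator) against the square, and using only $(1+a+b+\dots)^2\ge(1+a)(1+b)$ and $\tfrac x{1+x}\le\min(x,1)$ for $a,b,x\ge0$, every monomial collapses and, after possibly enlarging the universal constant $C$ in \eqref{epsilon}, we get
\[
\|N(t)\|_{L^2}\le C^{-1}\Big(\|\w(t)\|_{\H^1}+e^{-t/8}\|\u_2^0-\u_1^0\|_{\X}\Big).
\]

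Inserting the last estimate into the Duhamel formula \eqref{weqn} and using \eqref{SonH} together with $\|(0,f)\|_{\H^1}=\|f\|_{L^2}$ gives, for $t\in[0,T^\ast)$,
\[
\|\w(t)\|_{\H^1}\le C_0e^{-t/2}\|\w^0\|_{\H^1}+C_0C^{-1}\int_0^t e^{-(t-t')/2}\Big(\|\w(t')\|_{\H^1}+e^{-t'/8}\|\u_2^0-\u_1^0\|_{\X}\Big)\,dt'
\]
for a universal $C_0$. Setting $M(t):=\sup_{0\le t'\le t}e^{t'/16}\|\w(t')\|_{\H^1}$ and carrying out the time integrals (the slowest exponential being $e^{-t/16}$, since $\tfrac1{16}<\tfrac18<\tfrac12$) yields $M(t)\le C_0\|\w^0\|_{\H^1}+C_1C^{-1}M(t)+C_1C^{-1}\|\u_2^0-\u_1^0\|_{\X}$; choosing $C$ large enough that $C_1C^{-1}\le\tfrac12$, we absorb the middle term and deduce \eqref{wbound}. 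In particular $\|\w(t)\|_{\H^1}$ remains bounded on $[0,T^\ast)$, so the usual continuation criterion rules out finite-time blow-up and $T^\ast=+\infty$, completing the proof.

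The step I expect to be the main obstacle is the bookkeeping in the estimate of $\|N(t)\|_{L^2}$: one has to verify that the exponent $-\tfrac4\al$ in \eqref{epsilon} is tuned exactly to the gain $\ep^{\al/2}$ provided by Lemma~\ref{LEM:convdiff}, so that, after multiplying the regularisation-difference bounds by the a priori unbounded norms $\|\GG(t)\|_{W^{\al,\frac2{1-\al}}}$ and $\|\w(t)\|_{\H^1}$, one is left with a coefficient $\les C^{-1}$ in front of $\|\w(t)\|_{\H^1}$ together with a genuinely decaying forcing term, with the decay rate $e^{-t/8}$ inherited from the space $\X$; in particular it is crucial that $\|\u_2^0-\u_1^0\|_{\X}$ itself (and not merely $\|\pi_1 S(t)(\u_2^0-\u_1^0)\|$) appears in the denominator of $\ep(\w)$, so that the contributions quadratic in $S(t)(\u_2^0-\u_1^0)$ stay \emph{linear} in $\|\u_2^0-\u_1^0\|_{\X}$, as required by the form of \eqref{wbound}. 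A secondary, more routine, difficulty is the regularity accounting for $\GG=\Q(\u_1,\,\cdot\,)$, which involves the rough stochastic convolution $\stick_t$, and setting up the local theory with a state-dependent mollification parameter $\ep(\w)$.
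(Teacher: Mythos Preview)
Your argument is correct and arrives at the same conclusion, but it is organised differently from the paper. The paper works directly in the weighted Banach space
\[
Y=\Big\{\w\in C([0,\infty);\H^1):\ \sup_{t\ge0}e^{t/16}\|\w(t)\|_{\H^1}<\infty\Big\}
\]
and shows that the fixed-point map $\Gamma$ is a \emph{global} contraction on $Y$ with Lipschitz constant $\le\tfrac12$; this forces it to carry out the full difference estimate $F(\w)-F(\w')$ via a six-term decomposition (including Lipschitz control of $\ep(\w)-\ep(\w')$, which requires \eqref{Qdiff2}), after which both global existence and the decay bound \eqref{wbound} drop out together from $\|\w\|_Y\les\|\Gamma[\0]\|_Y$. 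You instead split the task into (i) local well-posedness on $C([0,T];\H^1)$ by a standard Picard argument, (ii) the a priori bound $\|N(t)\|_{L^2}\le C^{-1}\big(\|\w(t)\|_{\H^1}+e^{-t/8}\|\u_2^0-\u_1^0\|_{\X}\big)$ for a \emph{single} solution, and (iii) continuation. Your route makes the mechanism behind \eqref{wbound} more transparent, since only the one-sided estimate on $N(\w)$ (not the difference $N(\w)-N(\w')$) is needed for the Gr\"onwall step; the price is that the difference estimate---where the paper spends most of its effort, and where the state dependence of $\ep(\w)$ genuinely bites---is deferred to the local theory that you only sketch. Both arguments rest on the same cancellation: the exponent $-\tfrac4\al$ in \eqref{epsilon} is chosen so that the gain $\ep^{\al/2}$ from Lemma~\ref{LEM:convdiff} exactly absorbs the unbounded factors $\|\GG\|_{W^{\al,2/(1-\al)}}$ and $\|\w\|_{\H^1}$, leaving a coefficient $C^{-1}$ that can be made small.
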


\begin{proof}
For simplicity, we introduce the shorthands $\Q_{\w}$ for $\Q(\u_1, \w+S(t)(\u_2^{0}-\u_1^{0}))$ and $\wt \u^{0}:=\u_2^{0}-\u_1^{0}$. 
Let $Y \subset C([0,+\infty); \H^1)$ be the Banach space 
\begin{equation*}
Y = \{ \w \in  C([0,+\infty); \H^1): \|\w(t)\|_{\H^1} \les e^{-\frac t {16}} \}, \quad \| \w \|_{Y} := \sup_{t \ge 0} e^{\frac{t}{16}} \| \w\|_{\H^1}.
\end{equation*}
Define the operator $\Gamma: Y \to C([0,+\infty); \H^1)$ by 
\begin{align*}
&\Gamma[\w](t) \\
 &:=S(t)\w^{0}-\int_{0}^{t} S(t-t') \vec{0}{ \Q(\u_1, \w+S(t)(\u_2^0-\u_1^0)) (\pi_1 \w+\pi_1 S(t)(\u_2^{0}-\u_1^{0}))}dt' \\
 &+\int_{0}^{t} S(t-t') \vec{0}{\big(\Q(\u_1, \w+S(t)(\u_2^{0}-\u_1^{0})) \ast \rho_{\ep(\w)}\big)( \pi_1\w +\pi_1 S(t)(\u_2^{0}-\u_1^{0})\ast \rho_{\ep(\w)})}dt'.
\end{align*}
We want to show that if $C$ is a big enough constant, then $\Gamma$ is a contraction on $Y$ with $\Lip(\Gamma) \le \frac 12$.
We write 
\begin{align*}
F(\w)&:=\Q_{\w} (\pi_1 \w+\pi_1 S(t) \wt \u^{0})-\Q_{\w}\ast \rho_{\ep(\w)} (\pi_1 \w+\pi_1 S(t) \wt \u^{0}\ast \rho_{\ep(\w)}).
\end{align*}
We want to estimate $F(\w) - F(\w')$. We can assume without loss of generality that \linebreak $\ep(\w)(t)\le\ep(\w')(t)$. We have that 
\begin{align}
&F(\w) - F(\w') \nonumber \\
=&~[(\Q_\w - \Q_{\w'}) - (\Q_\w - \Q_{\w'}) \ast \rho_{\ep(\w)}](\pi_1 \w+\pi_1 S(t) \wt \u^{0}) \label{wdiff1}\\
&+ [\Q_{\w'} \ast \rho_{\ep(\w')} - \Q_{\w'} \ast \rho_{\ep(\w)}](\pi_1 \w'+\pi_1 S(t) \wt \u^{0}) \label{wdiff2}\\
&+ [\Q_{\w'} - \Q_{\w'} \ast \rho_{\ep(\w)}](\pi_1 \w - \pi_1 \w') \label{wdiff3}\\
&+ [(\Q_{\w} - \Q_{\w'})\ast \rho_{\ep(\w)}] (\pi_1 S(t) \wt \u^{0} - \pi_1 S(t) \wt \u^{0} \ast \rho_{\ep(\w)}) \label{wdiff4}\\
&+ [\Q_{\w'} \ast \rho_{\ep(\w)} - \Q_{\w'} \ast \rho_{\ep(\w')}](\pi_1 S(t) \wt \u^{0} - \pi_1 S(t) \wt \u^{0} \ast \rho_{\ep(\w)}) \label{wdiff5}\\
&+ [\Q_{\w'} \ast \rho_{\ep(\w')}](\pi_1 S(t) \wt \u^{0}\ast \rho_{\ep(\w')} - \pi_1 S(t) \wt \u^{0} \ast \rho_{\ep(\w)}) \label{wdiff6}.
\end{align}
First of all, notice that 
\begin{align*}
\Q_{\w}-\Q_{\w'}= (3 \pi_1 \u_1 +\pi_1(\w+\w')+2\pi_1 S(t)\wt \u^{0}) \pi_1(\w-\w'),
\end{align*}
so 
\begin{equation} \label{Qdiff2}
\|\Q_{\w}-\Q_{\w'}\|_{W^{\al,\frac 2 {1 -\al}}} \les (\|\u_1\|_{\X} + \| \w\|_{\H^1} +  \| \w'\|_{\H^1} + \|\wt \u_0\|_{\X})\| \w - \w'\|_{\H^1}.
\end{equation}
Therefore, by definition of $\eps$, we have 
\begin{equation}\label{epsdiff}
\begin{aligned}
\|\eps(\w') - \eps(\w)\|^\frac \al 2 \les &~ \| \w - \w'\|_{\H^1}(1 +\|\u_1\|_{\X} + \| \w\|_{\H^1} +  \| \w '\|_{\H^1} + \|\wt \u_0\|_{\X}) \\
&\times (1 + \|\w'\|_{\H^1} + \|\u_1\|_{\X} + \|\wt \u_0\|_{\X} + \|\Q_{\w'}\|_{W^{\al,\frac 2 {1 -\al}}})^{-2}   \\
&\times (1 + \|\w\|_{\H^1} + \|\u_1\|_{\X} + \|\wt \u_0\|_{\X} + \|\Q_{\w}\|_{W^{\al,\frac 2 {1 -\al}}})^{-1}
\end{aligned}
\end{equation}
Therefore, by Lemma \ref{LEM:convdiff}, \eqref{Qdiff2}, and definition of $\eps$, we have
\begin{equation} \label{wdiffest1}
\begin{aligned}
\| \eqref{wdiff1} \|_{L^2}& \les \|(\Q_\w - \Q_{\w'}) - (\Q_\w - \Q_{\w'}) \ast \rho_{\ep(\w)}\|_{L^{\frac2{1-\al}}}\|\pi_1 \w+\pi_1 S(t) \wt \u^{0}\|_{L^\frac2\al} \\
&\les e^{-\frac t {16}}\ep(\w)^{\frac \al 2} \|(\Q_\w - \Q_{\w'}) \|_{W^{\al,\frac 2{1-\al}}} (\|\w\|_{Y} + \|\wt \u^{0}\|_{X^\al})\\
& \les C^{-1} e^{-\frac t {8}} \| \w - \w'\|_{Y},
\end{aligned}
\end{equation}
Similarly, recalling that $f \ast \rho_{\ep_1 + \ep_2} = f \ast \rho_{\ep_1} \ast  \rho_{\ep_2}$ and \eqref{epsdiff}, we have
\begin{equation} \label{wdiffest2}
\begin{aligned}
\| \eqref{wdiff2} \|_{L^2}& \les \|\Q_{\w'} \ast \rho_{\ep(\w')} - \Q_{\w'} \ast \rho_{\ep(\w)}\|_{L^{\frac2{1-\al}}}\|\pi_1 \w'+\pi_1 S(t) \wt \u^{0}\|_{L^\frac2\al} \\
&\les e^{-\frac t {16}}(\ep(\w')-\ep(\w))^{\frac \al 2} \|\Q_{\w'} \|_{W^{\al,\frac 2{1-\al}}} (\|\w'\|_{Y} + \|\wt \u^{0}\|_{X^\al})\\
& \les C^{-1} e^{-\frac t {8}} \| \w - \w'\|_{Y}.
\end{aligned}
\end{equation}
By Lemma \ref{LEM:convdiff}, we have 
\begin{equation} \label{wdiffest3}
\begin{aligned}
\| \eqref{wdiff3} \|_{L^2}& \les \|\Q_{\w'} - \Q_{\w'} \ast \rho_{\ep(\w)}\|_{L^{\frac2{1-\al}}} \|\pi_1 \w-\pi_1 \w'\|_{L^\frac2\al} \\
&\les e^{-\frac t {16}}(\ep(\w))^{\frac \al 2} \|\Q_{\w'} \|_{W^{\al,\frac 2{1-\al}}} \|\w - \w'\|_{Y} \\
&\les e^{-\frac t {16}}(\ep(\w'))^{\frac \al 2} \|\Q_{\w'} \|_{W^{\al,\frac 2{1-\al}}} \|\w - \w'\|_{Y} \\
&\les C^{-1} e^{-\frac t {16}}\|\w - \w'\|_{Y},
\end{aligned}
\end{equation}
Proceeding analogously, 
\begin{equation} \label{wdiffest4}
\begin{aligned}
\| \eqref{wdiff4} \|_{L^2}& \les e^{-\frac t4} \ep(\w)^\frac \al 2\| \Q_\w - \Q_{\w'} \|_{W^{\al,\frac 2{1-\al}}}  \| \wt \u^0\|_{X^\al},\\
& \les e^{-\frac {5t}{16}} C^{-1}  \|\w - \w'\|_{Y},
\end{aligned}
\end{equation}
\begin{equation}\label{wdiffest5}
\begin{aligned}
\| \eqref{wdiff5} \|_{L^2} &\les e^{-\frac t4} \eps(\w)^{\frac 2 \al }(\eps(\w') - \eps(\w))^\frac 2 \al \|\Q_{\w'}\|_{W^{\al,\frac2{1-\al}}} \| \wt \u^0\|_{X^\al}\\
& \les C^{-1} e^{-\frac t4}(\eps(\w') - \eps(\w))^\frac 2 \al \\
& \les e^{-\frac {5t}{16}} C^{-1}  \|\w - \w'\|_{Y},
\end{aligned}
\end{equation}
and finally,
\begin{equation}\label{wdiffest6}
\begin{aligned}
\| \eqref{wdiff6} \|_{L^2} &\les e^{-\frac t4} (\eps(\w') - \eps(\w))^\frac 2 \al \|\Q_{\w'}\|_{W^{\al,\frac2{1-\al}}} \| \wt \u^0\|_{X^\al}\\
& \les e^{-\frac {5t}{16}} C^{-1}  \|\w - \w'\|_{Y}.
\end{aligned}
\end{equation}
By putting together \eqref{wdiffest1}, \eqref{wdiffest2}, \eqref{wdiffest3}, \eqref{wdiffest4}, \eqref{wdiffest5}, \eqref{wdiffest6}, we obtain 
\begin{equation*}
\| F(\w) - F(\w')\|_{L^2} \les e^{-\frac {t}{8}} C^{-1}  \|\w - \w'\|_{Y},
\end{equation*}
so 
\begin{align*}
\phantom{=\ }\| \Gamma[\w](t) - \Gamma[\w'](t)\|_{\H^1}
&\les  \int_{0}^t e^{- \frac{t-t'}{2}} \| F(\w)(t') - F(\w')(t') \|_{L^2} dt' \\
& \les C^{-1}  \int_{0}^t e^{- \frac{t-t'}{2}}  e^{-\frac {t'} 8} \|\w - \w'\|_{Y} dt'\\
&\les e^{-\frac t 8} C^{-1}  \|\w - \w'\|_{Y}.
\end{align*}
Therefore, if $C$ is big enough, we obtain that 
\begin{equation*}
\| \Gamma[\w] - \Gamma[\w']\|_{Y} \le \frac 12  \|\w - \w'\|_{Y}.
\end{equation*}
In particular, by Banach's fixed point theorem, we have that the equation \eqref{weqn} is globally well-posed in $\H^1$. Moreover, since $\Gamma$ is a contraction with $\Lip(\Gamma) \le \frac 12$, we have 
\begin{align*}
\| \w \|_Y &\les \| \Gamma[\0] \|_Y \\
&\les \| \w^0\|_{\H^1} + \sup_t e^{\frac t {16}}\int_0^t e^{-\frac{t-t'}2} \| F(\0)\|_{L^2}\\
&\les \| \w^0\|_{\H^1} + \sup_t e^{\frac t {16}}\int_0^t e^{-\frac{t-t'}2}  e^{- \frac {t'}{4}} \eps(\0)^{\frac \al 2} (\| \Q_\0 \|_{W^{\al, \frac 2{1-\al}}} \| \wt \u^0 \|_{X^\al}) \\
&\les \| \w^0\|_{\H^1} + \| \wt \u^0 \|_{X^\al},
\end{align*}
which proves \eqref{wbound}.

\end{proof}

\begin{proposition}\label{LEM:h} Let $s>0$, $\g \in \R$, $\u_1^{0},\u_2^{0}\in \X$ and $\u_1$ be the unique global solution of \eqref{duhamel} with $\u\vert_{t=0}=\u_{1}^{0}$ and of the form \eqref{u1inw}. Let $\w$ be the unique global solution to \eqref{weqn} with $\w^{0}=0$.
Then, the function 
\begin{align}
\begin{split}
h(t):= \frac{1}{\sqrt{2}}\jb{\nb}^{s}\big[ &\mathcal{Q}(\u_1(t), \w(t)+ S(t)(\u_2^{0}-\u_1^{0})\ast \rho_{\ep(\w)}) \\
&\times(\pi_1 \w(t)+\pi_1 S(t)(\u_2^{0}-\u_1^{0})\ast \rho_{\ep(\w)})\big],
\end{split}\label{h}
\end{align}
is adapted to the filtration $\{\mathcal{F}_{t}\}_{t\geq 0}$. Moreover, $\| h(t) \|_{L^2}$ is a continuous function of $t$, and
\begin{align} 
\E\big[  \|h\|_{L^{2}([0,\infty);L^{2}_{x})}^{2} \big] \le C\|\u_1^{0}-\u_2^{0}\|^2_{\X}<+\infty, \label{hL2}
\end{align}
where $C= C(\|\u_1^{0}\|_{\X}, \|\u_1^{0}-\u_2^{0}\|_{\X})> 0$ is a nondecreasing function of its arguments.
\end{proposition}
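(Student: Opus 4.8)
The strategy is to exploit the structure of $h$: comparing \eqref{h} with the last integral term of the equation \eqref{weqn} for $\w$ (taken with $\w^0=0$), one sees that, with the shorthand $\Q_\w:=\Q(\u_1(t),\w(t)+S(t)(\u_2^0-\u_1^0))$ of Proposition~\ref{PROP:shift},
\begin{equation*}
\sqrt2\,\jb{\nb}^{-s}h(t)=\big(\Q_\w\ast\rho_{\ep(\w)}\big)\big(\pi_1\w(t)+\pi_1S(t)(\u_2^0-\u_1^0)\ast\rho_{\ep(\w)}\big)
\end{equation*}
is exactly the mollified cubic forcing appearing in \eqref{weqn}. Hence $h$ is regular enough to lie in $L^2_x$ because of the heat mollifier $\rho_{\ep(\w)}$, and it is square-integrable in time because the factor $\pi_1\w+\pi_1S(t)(\u_2^0-\u_1^0)\ast\rho_{\ep(\w)}$ carries a power of $\|\u_1^0-\u_2^0\|_{\X}$ and decays exponentially, by \eqref{wbound} and \eqref{Xalpha}.

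For \emph{adaptedness}: the path $\{\Phi_{t'}(\u_1^0,\xi)\}_{t'\le t}$ is $\F_t$-measurable in $\X$, as in (the proof of) Proposition~\ref{PROP:markov}; since \eqref{weqn} is a deterministic fixed-point equation whose only random input is the path of $\u_1$, running the Banach iteration of Proposition~\ref{PROP:shift} shows $\{\w(t')\}_{t'\le t}$, hence $\ep(\w)(t)$ via \eqref{epsilon}, is $\F_t$-measurable, and therefore so is $h(t)$. For \emph{continuity} of $t\mapsto\|h(t)\|_{L^2_x}$: one has $\u_1\in C([0,\infty);\X)$ a.s.\ (Lemma~\ref{compactOrbits}, Proposition~\ref{PROP:stickZ}, and $\v_1\in C([0,\infty);\H^1)$) and $\w\in C([0,\infty);\H^1)$ by Proposition~\ref{PROP:shift}; hence $t\mapsto\Q_\w$ is continuous in $W^{\al,\frac2{1-\al}}$ by \eqref{basic1}--\eqref{basic4}, so $\ep(\w)(t)>0$ is continuous in $t$, and continuity of $(f,\ep)\mapsto f\ast\rho_\ep$ on $\{\ep>0\}$ together with the boundedness of $\jb{\nb}^s$ between the relevant Sobolev spaces gives $h\in C([0,\infty);L^2_x)$.

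The heart of the matter is \eqref{hL2}, which I would deduce from the pointwise-in-time estimate
\begin{equation*}
\|h(t)\|_{L^2_x}\les e^{-ct}\,\|\u_1^0-\u_2^0\|_{\X}\,\big(1+\|\u_1(t)\|_{\X}+\|\w(t)\|_{\H^1}+\|\u_1^0-\u_2^0\|_{\X}+\|\Q_\w\|_{W^{\al,\frac2{1-\al}}}\big)^{M}
\end{equation*}
for suitable fixed $c>0$ and integer $M=M(\al,s)$. To prove it, distribute $\jb{\nb}^s$ over the product using the fractional Leibniz rule (Lemma~\ref{LEM:fraclieb}), keeping all but at most one derivative on the mollified factor $\Q_\w\ast\rho_{\ep}$, which is smooth, and pay the mollifier with \eqref{smoothing}, e.g.\ $\|\jb{\nb}^s(\Q_\w\ast\rho_\ep)\|_{L^{2/(1-\al)}}=\|(\jb{\nb}^s\Q_\w)\ast\rho_\ep\|_{L^{2/(1-\al)}}\les\ep^{-\frac{s-\al}2}\|\Q_\w\|_{W^{\al,2/(1-\al)}}$, while $\|\pi_1\w+\pi_1S(t)(\u_2^0-\u_1^0)\ast\rho_\ep\|_{L^{2/\al}}\les\|\w\|_{\H^1}+\|S(t)(\u_2^0-\u_1^0)\|_{\W^{\al,2/\al}}\les e^{-ct}\|\u_1^0-\u_2^0\|_{\X}$ by Sobolev embedding, \eqref{wbound} and \eqref{Xalpha}; this is the step that produces both the decay $e^{-ct}$ and the single power of $\|\u_1^0-\u_2^0\|_{\X}$. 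Substituting the explicit choice \eqref{epsilon} for $\ep(\w)(t)$ then turns the (fixed) negative powers of $\ep$ into a fixed polynomial power of $1+\|\w(t)\|_{\H^1}+\|\u_1^0-\u_2^0\|_{\X}+\|\u_1(t)\|_{\X}+\|\Q_\w\|_{W^{\al,2/(1-\al)}}$, which is precisely the form of the displayed bound.

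Once this pointwise estimate is in hand, \eqref{hL2} follows by squaring, taking expectations, and integrating with Tonelli: $\E[\|h(t)\|_{L^2_x}^2]\les e^{-2ct}\|\u_1^0-\u_2^0\|_{\X}^2\,\E[(1+\cdots)^{2M}]$, and by H\"older's inequality the last expectation is bounded uniformly in $t\ge0$ by a nondecreasing function of $\|\u_1^0\|_{\X}$ and $\|\u_1^0-\u_2^0\|_{\X}$ — the moments of $\|\u_1(t)\|_{\X}$ being controlled uniformly in $t$ via \eqref{momentsstick} and \eqref{vmoments}, those of $\|\w(t)\|_{\H^1}$ via \eqref{wbound}, and those of $\|\Q_\w\|_{W^{\al,2/(1-\al)}}$ via Lemma~\ref{LEM:epswelldef} (applied with $\mathbf z=\w(t)$ and $S(t)\u_2^0$ replaced by $S(t)(\u_2^0-\u_1^0)$) together with \eqref{wbound}. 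Integrating $e^{-2ct}$ over $[0,\infty)$ yields $\E[\|h\|_{L^2([0,\infty);L^2_x)}^2]\le C(\|\u_1^0\|_{\X},\|\u_1^0-\u_2^0\|_{\X})\,\|\u_1^0-\u_2^0\|_{\X}^2$ with $C$ nondecreasing, as claimed. The main obstacle is exactly this pointwise estimate of $\|h(t)\|_{L^2_x}$: one must distribute $\jb{\nb}^s$ so that the genuinely rough pieces are never differentiated beyond their regularity, and then check that inserting the specific $\ep(\w)(t)$ of \eqref{epsilon} simultaneously keeps the bound polynomial in norms with good moments, retains a full factor $\|\u_1^0-\u_2^0\|_{\X}$, and preserves the exponential decay — all three being needed at once for \eqref{hL2}.
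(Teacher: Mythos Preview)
Your proposal is correct and follows essentially the same route as the paper: both establish adaptedness from that of $\u_1$ and $\w$, then prove a pointwise bound of the form $\|h(t)\|_{L^2}\lesssim e^{-ct}\|\u_1^0-\u_2^0\|_{\X}\big(1+\|\u_1\|_{\X}+\|\w\|_{\H^1}+\|\tilde\u^0\|_{\X}+\|\Q_\w\|_{W^{\al,2/(1-\al)}}\big)^{k}$ by using the heat-kernel smoothing \eqref{smoothing} to trade $\jb{\nb}^s$ for a negative power of $\ep(\w)$, and finally integrate using \eqref{Qbound}, \eqref{wbound}, and \eqref{vmoments}. Your use of fractional Leibniz makes explicit what the paper compresses into ``by \eqref{smoothing} with $\ta=s$'', and where you bound the moment factor uniformly in $t$ the paper keeps $\eta(t;\|\u_1^0\|_{\X})^{k(s,\al)}$ inside the time integral, but both yield the same nondecreasing constant $C(\|\u_1^0\|_{\X},\|\u_1^0-\u_2^0\|_{\X})$.
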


\begin{proof}
Since $\w$ and $\u_1$ are adapted to the filtration $\{\mathcal{F}_{t}\}_{t\geq 0}$, it is easy to see from \eqref{h} that $h$ is also adapted. Let 
$\Q_\w = \mathcal{Q}(\u_1(t), \w(t)+ S(t)(\u_2^{0}-\u_1^{0}))$, $\wt \u^0 := \u_2^{0}-\u_1^{0}$. By \eqref{smoothing} with $\ta = s$, we see that $h(t)$ is a continuous function of time with values in $L^2$ and 
\begin{equation}\label{hestimate}
\begin{aligned}
\| h(t) \|_{L^2} &\les \eps(\w)^{-\frac s 2} \|\Q_\w\|_{W^{\al, \frac2{1-\al}}} (\| \w\|_{\H^1} + \|S(t)\wt \u^0\|_{\X}) \\
&\les e^{-\frac t {16}}  (1 +\|\wt \u^0\|_{\X} + \| \u_1\|_{\X} + \| \Q_\w \|_{W^{\al, \frac2{1-\al}}} )^{k(s,\al)} \|\wt \u^0\|_{\X},
\end{aligned}
\end{equation}
where we used \eqref{epsilon} and \eqref{wbound} in the last inequality, and $k(s,\al)$ is an appropriate constant.
Therefore, by \eqref{Qbound}, \eqref{wbound} and \eqref{vmoments},
\begin{align*}
&\phantom{\le\ }\E\bigg[\int_0^{+\infty} \| h(t) \|_{L^2}^2 dt \bigg]\\
&\les \E\bigg[\int_0^{+\infty} e^{-\frac t 8} (1 +\|\wt \u^0\|_{\X} + \| \u_1\|_{\X} + \| \Q_\w \|_{W^{\al, \frac2{1-\al}}} )^{2k(s,\al)} \|\wt \u^0\|_{\X}^2 dt \bigg]\\
&\les \|\wt \u^0\|_{\X}^2 \int_0^{+\infty} e^{-\frac t 8}  \Big[ 1 + \| \u_1^0\|_{\X}^2 + \|\wt \u^0\|_{\X}^2 + \eta(t;\|\u^1_0\|_{\X})^{k(s,\al)} \Big] d t\\
&\le C(\|\u_1^{0}\|_{\X}, \|\u_1^{0}-\u_2^{0}\|_{\X})\|\wt \u^0\|_{\X}^2.
\end{align*}
\end{proof}

For $\u_1^0, \u_2^0 \in \X$, we define 
\begin{align*}
\u_2^{h}:=\w+\u_1+S(t)(\u^0_2-\u^0_1),
\end{align*}
where $\u_1(t) = \Phi_t(\u_1^0; \xi)$, and $\w$ is the solution to \eqref{weqn} with $\w^0 = \0$. 
For $h$ as in \eqref{h}, comparing \eqref{weqn} with \eqref{veqn} we obtain that 
\begin{align} \label{u2h}
\u_2^{h} = \Phi_{t}(\u_2^0, \xi+h)=\Phi_{t}(\u_1^0,\xi)+S(t)(\u_2^0-\u_1^0)+\w(t).
\end{align}

\begin{remark}\rm
Note that if $L_t$ is the linear flow; namely, $L_{t}(\u_0;\xi)=S(t)\u_0+\stick_{t}(\xi)$, then we can choose $h=0$ and $\w=0$ and obtain the analogous version of \eqref{u2h}
\begin{equation*}
L_{t}(\u_2^0;\xi) = L_t(\u_1^0;\xi) + S(t)(\u_2^0-\u_1^0).
\end{equation*}
We then have for any $\mu_1$ and $\mu_2$ invariant, 
\begin{align*}
\|\mu_1 -\mu_2\|_{d_{n}}&\leq \int \E[ d_{n}(S(t)(\u_2^0-\u_1^0), {\bf{0}})] d\mu_1(\u_1^0) d \mu_2(\u_2^0)\\
& \leq \int \big(ne^{-\frac t4} \| \u_2^0 - \u_1^0\|_{X^\al} \wedge 1\big) d\mu_1(\u_1^0) d \mu_2(\u_2^0) \to 0
\end{align*}
as $t \to \infty$, which implies that $\|\mu_1 -\mu_2\|_{d_{n}} = 0$ for every $n \in \N$. In particular, we have that $\mu_1 = \mu_2$, i.e.\ uniqueness of the invariant measure.
\end{remark}

\subsection{Proof of Theorem~\ref{THM:erg} and Theorem~\ref{THM:conv}} \label{SEC:proofThm1}
Given $n\in \N$, recall that  
\begin{align}
d_{n}(\mathbf{x},\mathbf{y})=1\wedge n\|\mathbf{x}-\mathbf{y}\|_{\X}, \label{dn1}
\end{align}
and denote by $\text{Lip}_{d_n, \frac{1}{2}}$ the set of $d_{n}$-Lipschitz continuous functions $F:\X\to \R$ such that $\|F\|_{d_{n}}\leq 1$ and $\|F\|_{L^{\infty}}\leq \frac{1}{2}$.

\begin{proposition}\label{PROP:flowclose} Let $n\in \N$ and $R>0$ be fixed. Given any $\u_1^{0},\u_2^{0}\in B_{R}( \bf{0})$, there exist $t_0=t_0(n,R)>0$ and $K(R)>0, K'(R) > 0$ such that, for every $t \ge t_0$,
\begin{align}
\E\big[ F(\Phi_{t}(\u_1^{0},\xi))\big] -\E\big[ F(\Phi_{t}(\u_2^{0},\xi))\big] \leq (1-K(R))\big(1 \wedge K'(R) \| \u_1^0-\u_2^0\|_{\X}^\frac12\big), \label{diffflow}
\end{align}
for any $F\in \textup{Lip}_{d_n ,\frac{1}{2}}$.
\end{proposition}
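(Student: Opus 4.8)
The plan is to establish \eqref{diffflow} via the Girsanov-based asymptotic coupling of Hairer--Mattingly \cite{HM1,HM3}, using the shift $h$ built in Proposition~\ref{LEM:h}. Write $\u_i(t):=\Phi_t(\u_i^0,\xi)$ for $i=1,2$, let $\w$ be the solution to \eqref{weqn} with $\w^0=\0$, let $h$ be as in \eqref{h}, and put $\u_2^h(t):=\Phi_t(\u_2^0,\xi+h)$, so that by \eqref{u2h} the pathwise identity $\u_2^h(t)=\u_1(t)+S(t)(\u_2^0-\u_1^0)+\w(t)$ holds. I would then decompose
\begin{equation*}
\E[F(\u_1(t))]-\E[F(\u_2(t))]=\big(\E[F(\u_1(t))]-\E[F(\u_2^h(t))]\big)+\big(\E[F(\u_2^h(t))]-\E[F(\u_2(t))]\big),
\end{equation*}
controlling the first term pathwise (the two flows stay exponentially close) and the second by a change of measure (the two laws are close in total variation).

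\textbf{The pathwise term.} From \eqref{S(t)onX}, the embedding $\H^1\embeds\X$ of Lemma~\ref{LEM:Hcompact}, and \eqref{wbound} with $\w^0=\0$, one gets $\|\u_1(t)-\u_2^h(t)\|_{\X}\les e^{-t/16}\|\u_1^0-\u_2^0\|_{\X}$ with a deterministic implicit constant. Since $\|F\|_{d_n}\le1$ and $d_n(x,y)=1\wedge n\|x-y\|_{\X}$, the first term is bounded by $\E[d_n(\u_1(t),\u_2^h(t))]\le1\wedge\big(Cn\,e^{-t/16}\|\u_1^0-\u_2^0\|_{\X}\big)$.

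\textbf{The change-of-measure term.} Let $\mathbb Q$ be the measure under which $\xi+h$ is a space-time white noise, $d\mathbb Q/d\prob$ being the corresponding Girsanov density. Since Novikov's condition is not directly at hand, I would make this rigorous by localising at the stopping times $\tau_M:=\inf\{t:\int_0^t\|h(s)\|_{L^2_x}^2\,ds\ge M\}$, noting that $t\mapsto\|h(t)\|_{L^2_x}$ is continuous by Proposition~\ref{LEM:h} so $\tau_M$ is a stopping time and $h_M:=h\mathbf 1_{[0,\tau_M)}$ is adapted with $\int_0^\infty\|h_M\|_{L^2_x}^2\le M$, and then letting $M\to\infty$ (legitimate because $\int_0^\infty\|h(t)\|_{L^2_x}^2\,dt<\infty$ a.s., whence $h_M$ eventually coincides with $h$ on $[0,t]$). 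As $\Phi_t(\u_2^0,\cdot)$ is a fixed measurable functional of the driving noise, $\L_{\mathbb Q}(\u_2^h(t))=\L_\prob(\u_2(t))$; hence $\L_\prob(\u_2^h(t))$ and $\L_\prob(\u_2(t))$ are images of $\prob$ and $\mathbb Q$ under the same map, so they are at total variation distance at most $\|\prob-\mathbb Q\|_{\textup{TV}}$, and using $\|F\|_{L^\infty}\le\tfrac12$ the second term is $\le\|\prob-\mathbb Q\|_{\textup{TV}}$. To bound the latter I would not use Pinsker's inequality but the sharper relation $\|\prob-\mathbb Q\|_{\textup{TV}}\le\sqrt{1-e^{-D}}$ between total variation and relative entropy $D:=D_{\textup{KL}}(\prob\,\|\,\mathbb Q)$, which stays strictly below $1$. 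A direct computation, in which the It\^o term has zero mean, gives $D=\tfrac12\,\E_\prob\!\big[\int_0^\infty\|h(t)\|_{L^2_x}^2\,dt\big]$, so by \eqref{hL2} and the monotonicity of the constant there, $D\le\tfrac12 C(R)\|\u_1^0-\u_2^0\|_{\X}^2$ with $C(R)$ depending only on $R$; the truncated versions satisfy the same bound. Thus the second term is $\le\sqrt{1-e^{-C(R)\|\u_1^0-\u_2^0\|_{\X}^2/2}}$.

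\textbf{Conclusion.} Writing $a:=\|\u_1^0-\u_2^0\|_{\X}$, which satisfies $a\le 2R$ since $\u_1^0,\u_2^0\in B_R(\0)$, the two estimates yield, for $t\ge t_0$,
\begin{equation*}
\E[F(\u_1(t))]-\E[F(\u_2(t))]\le 1\wedge\big(Cn\,e^{-t/16}a\big)+\sqrt{1-e^{-C(R)a^2/2}}.
\end{equation*}
Exploiting $a\le 2R$ together with $\sqrt{1-e^{-x}}<1$ and $\sqrt{1-e^{-x}}\le\sqrt x$, and choosing $t_0=t_0(n,R)$ large enough that $2RCn\,e^{-t_0/16}\le\min\{K(R),1\}$ with $K(R):=\tfrac12\big(1-\sqrt{1-e^{-2R^2C(R)}}\big)>0$, the right-hand side is simultaneously $\le 1-K(R)$ and $\le\big(1+\sqrt{R\,C(R)}\big)a^{1/2}$; taking $K'(R):=\big(1+\sqrt{R\,C(R)}\big)/(1-K(R))$, these two bounds combine (distinguishing $K'(R)a^{1/2}\le1$ from $K'(R)a^{1/2}>1$) to give precisely \eqref{diffflow}. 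I expect the change-of-measure step to be the crux: one must justify Girsanov without Novikov's condition (hence the truncation), and --- more essentially --- bound $\|\prob-\mathbb Q\|_{\textup{TV}}$ by a quantity strictly below $1$ even when $\|\u_1^0-\u_2^0\|_{\X}$ is of order $R$, for which the trivial bound $\|\prob-\mathbb Q\|_{\textup{TV}}\le1$ together with Pinsker's inequality is insufficient and the relative-entropy estimate $\sqrt{1-e^{-D}}$ is needed instead.
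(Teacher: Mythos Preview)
Your argument is correct and reaches \eqref{diffflow}, but the route differs from the paper's in three places, and it is worth recording what each choice buys.

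\textbf{Total variation bound.} The paper does not invoke Bretagnolle--Huber. Instead it proves an elementary inequality (Lemma~\ref{LEM:prob}/Remark~\ref{RMK:prob}): if $\E[e^X]=1$ then $\E[|e^X-1|]\le 2\big(1-e^{-L}+e^{-L}L^{-1}\E[|X|]\big)$ for any $L>0$, and applies it to $X=\log\EE(h_M)$. This keeps the proof self-contained but introduces a free parameter $L$ that must be tuned twice: once with $L=L(R)$ fixed to obtain the uniform gap $1-K(R)$, and once with $L\sim\|\u_1^0-\u_2^0\|_{\X}^{1/2}$ to obtain the small-distance bound. Balancing the two terms in the paper's inequality is what produces the exponent $\tfrac12$ in \eqref{diffflow}. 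Your Bretagnolle--Huber bound $\|\prob-\mathbb Q\|_{\textup{TV}}\le\sqrt{1-e^{-D}}$ handles both regimes in one stroke; it is strictly below $1$ whenever $D<\infty$ and is $\le\sqrt{D}\lesssim_R\|\u_1^0-\u_2^0\|_{\X}$ for small distances. In fact your method yields the stronger small-distance bound of order $\|\u_1^0-\u_2^0\|_{\X}$ rather than $\|\u_1^0-\u_2^0\|_{\X}^{1/2}$; you then downgrade via $a\le\sqrt{2R}\,a^{1/2}$ only to match the stated exponent.

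\textbf{Localisation.} The paper's stopping time $\tau_M$ is defined through the size of $\stick_t(\xi)$ and its Wick powers, so that on $\{t\le\tau_M\}$ one has a \emph{pointwise-in-time} deterministic bound $\|h_M(t)\|_{L^2_x}\le C_2(M,R)$; Novikov is then immediate, and one must separately estimate $\prob(t>\tau_M)\le C(t)/M^2$ and absorb it by choosing $M=M(t,R)$. Your stopping time based on $\int_0^t\|h\|_{L^2_x}^2$ caps the Novikov exponent directly; you never estimate $\prob(\tau_M\le t)$ explicitly, because the KL bound $D_M\le D\le \tfrac12 C(R)\|\u_1^0-\u_2^0\|_{\X}^2$ is uniform in $M$ and $\prob(\tau_M\le t)\to 0$ suffices to pass to the limit. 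This is cleaner, though the paper's truncation has the side benefit of simultaneously controlling $\|\v_j\|_{\H^1}$ deterministically on $\{t\le\tau_M\}$.

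\textbf{Summary.} Both proofs share the same skeleton---pathwise contraction of $\u_1-\u_2^h$ via \eqref{wbound} plus a Girsanov change of measure controlled by \eqref{hL2}---but your use of the entropy/TV inequality replaces the paper's ad hoc Lemma~\ref{LEM:prob} and its two-parameter optimisation, and even improves the small-distance exponent.
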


We postpone the proof of this proposition to the end of the section, and proceed to the proofs of Theorem \ref{THM:erg} and Theorem \ref{THM:conv}.

\begin{proof}[Proof of Theorem~\ref{THM:erg}]
Suppose, in order to obtain a contradiction, that there exist probability measures $\mu_1$ and $\mu_2$, $\mu_{1}\neq \mu_2$, which are both invariant under the flow of \eqref{duhamel}. We first suppose that $\mu_1 \perp \mu_2$. We will return to the case when they are not mutually singular at the end of the proof. 
We fix $n\in \N$. 
Let $R>0$ be large enough so that $\displaystyle{\min_{j=1,2}} \mu_{j,R}>\frac{1}{2}$, where $\mu_{j,R}:=\mu_{j}(B_{R}(\bf{0}))$ and let $t=t(n,R)>0$ and $K(R)>0$ be given by Proposition~\ref{PROP:flowclose}.
By Lemma~\ref{LEM:MKDuality}, Lemma~\ref{LEM:testLinfty} 
and the invariance of $\mu_1$ and $\mu_2$, we have 
\begin{align*}
\| \mu_1-\mu_2\|_{d_{n}}& = \sup_{F\in \text{Lip}_{d_n ,\frac{1}{2}}} \bigg[ \int_{\X} F(\u_1^{0})d\mu_1 (\u_1^{0})-\int_{\X} F(\u_2^{0})d\mu_2 (\u_2^{0}) \bigg] \\ 
& =\sup_{F\in \text{Lip}_{d_n ,\frac{1}{2}}}  \iintt_{\X \times \X}\E\big[ F(\Phi_{t}(\u_1^{0},\xi))\big] -\E\big[ F(\Phi_{t}(\u_2^{0},\xi))\big] d\mu_1( \u_1^{0}) d\mu_2(\u_2^{0}).
\end{align*}
Using that $F\in \text{Lip}_{d_n, \frac{1}{2}}$ and Proposition~\ref{PROP:flowclose}, we have  
\begin{align*}
\| \mu_1 -\mu_2\|_{d_n} & \leq \sup_{F\in \text{Lip}_{d_n ,\frac{1}{2}}} \bigg\vert \iintt_{B_{R}({\bf{0}}) \times B_{R}({\bf{0}})}\E\big[ F(\Phi_{t}(\u_1^{0},\xi))\big] -\E\big[ F(\Phi_{t}(\u_2^{0},\xi))\big] d\mu_1( \u_1^{0}) d\mu_2(\u_2^{0}) \bigg\vert \\
& \hphantom{XX} + \mu_{1,R}(1-\mu_{2,R})+(1-\mu_{1,R})\mu_{2,R} +(1-\mu_{1,R})(1-\mu_{2,R}) \\
&\leq (1-K(R)) \mu_{1,R}\mu_{2,R}+1-\mu_{1,R}\mu_{2,R} \\
& = 1-K(R)\mu_{1,R}\mu_{2,R} \\
& \leq 1-\frac{1}{4}K(R).
\end{align*}
Note that the right hand side of the above inequality is independent of $n\in \N$. 
Now, taking $n\to \infty$ using Lemma~\ref{LEM:dnlimit}, we have shown 
\begin{align*}
\| \mu_1-\mu_2\|_{\text{TV}} \leq 1-\frac{1}{4}K(R) <1,
\end{align*}
which is a contradiction to the fact that $\mu_1 \perp \mu_2$. Thus, $\mu_1 =\mu_2$.
If instead $\mu_1$ and $\mu_2$ are not mutually singular, then we consider the measures
\begin{align*}
\rho_{1}&=\frac{1}{(\mu_1-\mu_2)_{+}(\X)}(\mu_1-\mu_2)_{+},\\
\rho_{2}& = \frac{1}{(\mu_2-\mu_1)_{+}(\X)}(\mu_2-\mu_1)_{+}.
\end{align*}
These are probability measures which are invariant under the flow of \eqref{duhamel} and moreover, $\rho_{1}\perp \rho_{2}$. By the same argument as above with $(\rho_1,\rho_2)$ replacing $(\mu_1,\mu_2)$ we would obtain $\| \rho_1 -\rho_2 \|_{\text{TV}}<1$, which is a contradiction. Hence, we again conclude that $\mu_1 =\mu_2$.
\end{proof}

\begin{proof}[Proof of Theorem~\ref{THM:conv}]
Fix $\eps > 0$, and let $\xi, \xi'$ be two independent copies of space-time white noise. Let $\u_1^0, \u_2^0 \in \X$, and fix $R \gg 1$. We want to show that the stopping time 
\begin{equation*}
\tau(\u_1^0, \u_2^0) = \min\{ t: \Phi_t(\u_0, \xi), \Phi_t(\u_1^0, \xi') \in B_R(\0), \|\Phi_t(\u_0, \xi) - \Phi_t(\u_1^0, \xi')\|_{\X}>\eps \} 
\end{equation*}
is finite a.s.
Let $\nu_t = \nu_t(\u_1^0, \u_2^0)$ be the law of the process 
$(\Phi_t(\u_1^0, \xi), \Phi_t(\u_2^0, \xi'))$, which takes values in $\X \times \X$.
By Theorem \ref{THM:erg} and Proposition \ref{PROP:KryBo}, we have that 
\begin{equation*}
\frac 1 T \int_0^T \nu_t  dt \rightharpoonup \rho_s \otimes \rho_s
\end{equation*} 
weakly.
For $R \gg 1$, let $f_{R,\eps}: \X \times \X \to \R$ be given by
\begin{equation*}
f_{R,\eps}(x,y) = \Big(1 - 1 \wedge \frac{\|x-y\|_{\X}}\eps\Big) \Big(1 - 1 \wedge \frac{\|x\|_{\X}}{R}\Big)\Big(1- 1 \wedge \frac{\|y\|_{\X}}{R}\Big).
\end{equation*}
Therefore, we have that 
\begin{equation} \label{limitKryBo}
\lim_{T \to \infty}\frac 1 T \int_0^T \E[f_{R,\eps}(\Phi_t(\u_1^0, \xi), \Phi_t(\u_2^0, \xi'))]  dt = \iint f_{R,\eps}(x,y) d\rho_s(x) d\rho_s(y) = 2\gamma_{R,\eps}.
\end{equation}
Suppose that $\gamma_{R,\eps} > 0$. Since $f_{R,\eps} \le 1$ and $f_{R,\eps}(\Phi_t(\u_1^0, \xi), \Phi_t(\u_2^0, \xi')) = 0$ for $t < \tau$,
by \eqref{limitKryBo} we can deduce that there exists $T(\u_1^0, \u_2^0)$ such that 
\begin{equation*}
\prob(\{ \tau(\u_1^0, \u_2^0) \le T(\u_1^0, \u_2^0)\}) \ge  \gamma_{R,\eps}.
\end{equation*}
Define recursively the stopping time $\tau_k$ with $\tau_0 = T(\u_1^0, \u_2^0)$, and 
\begin{equation*}
\tau_{k+1} = \tau_{k} + T(\Phi_{\tau_k}(\u_1^0, \xi), \Phi_{\tau_k}(\u_2^0, \xi')).
\end{equation*}
We have that
\begin{align*}
&\phantom{=\ }\prob\{\tau > \tau_{k}\} \\
&= \prob\{\tau(\Phi_{\tau_{k-1}}(\u_1^0, \xi), \Phi_{\tau_{k-1}}(\u_2^0, \xi')) > T(\Phi_{\tau_k}(\u_1^0, \xi), \Phi_{\tau_{k-1}}(\u_2^0, \xi'))| \tau > \tau_{k-1}\} \prob\{ \tau > \tau_{k-1} \}\\
&\le (1-\gamma_{R,\eps}) \prob\{ \tau > \tau_{k-1} \}\\
& \le (1-\gamma_{R,\eps})^{k+1} \to 0 \text{ as }k \to \infty.
\end{align*}
where we used the (strong) Markov property for the penultimate inequality, and a simple induction for the last inequality. Therefore, $\tau$ is finite a.s., as long as $\gamma_{R,\eps} > 0$. We notice that 
\begin{equation*}
f_{R,\eps} \ge \frac 1 8 \quad  \text{ for } \quad \|x-y\|_{\X} \le \frac \eps 2,\, \|x\|_{\X}  \le \frac R2, \, \|y\|_{\X}  \le \frac R2.
\end{equation*}
Let $R$ be such that $\rho_s(B_{\frac R 4}(\0)) > 0$, and let $x_0 \in B_{\frac R 4}(\0)$ be such that $\rho_s(B_{\frac \eps 4}(x_0)) > 0$. Such an $x_0$ must exist because countably many such balls cover $ B_{\frac R 4}(\0)$. Therefore,
\begin{align*}
\gamma_{R,\eps} &=  \frac 12 \iint f_{R,\eps}(x,y) d\rho_s(x) d\rho_s(y) \\
&\ge \frac 1 {16} \iint \ind_{\{ \|x-y\|_{\X}  \le \frac \eps 2, \|x\|_{\X}  \le \frac R2,  \|y\|_{\X}  \le \frac R2\}}(x,y) d\rho_s(x) d\rho_s(y)\\
&\ge  \frac 1 {16} \rho_s(B_{\frac \eps 4}(x_0))^2 > 0.
\end{align*}
We now move to proving \eqref{EQN:WassConv}. Fix $\u_0 \in \X$, and let $F \in \Lip_{d_n,\frac12}$. By Proposition \ref{PROP:flowclose} and the (strong) Markov property, we have that 
\begin{align*}
&\phantom{\le\ }\int \E\big[ F(\Phi_{t}(\u_0,\xi))\big] -\E\big[ F(\Phi_{t}(\u_1^{0},\xi'))\big] d \rho_s(\u_1^0)\\
&\le \int \prob(\{ \tau(\u_0,\u_1^0) > t - t_0(n,R) \}) d \rho_s(\u_1^0) \\
&\phantom{\le}+ \int \E\big[ \ind_{\{ \tau(\u_0,\u_1^0) \le t - t_0(n,R) \}}F(\Phi_{t - \tau}(\Phi_\tau(\u_0,\xi), \xi'')) -  F(\Phi_{t - \tau}(\Phi_\tau(\u_1^0,\xi'), \xi'''))\big] d \rho_s(\u_1^0) \\
&= \int \prob(\{ \tau(\u_0,\u_1^0) > t - t_0(n,R) \}) d \rho_s(\u_1^0) \\
&\phantom{\le}+  \int \E_{\xi,\xi'}\big[\ind_{\{ \tau(\u_0,\u_1^0) \le t - t_0(n,R) \}}\Big(\E_{\xi''}[F(\Phi_{t - \tau}(\Phi_\tau(\u_0,\xi), \xi''))\big] \\
&\phantom{\le + \int \E_{\xi,\xi'}\Big[\ind_{\{ \tau(\u_0,\u_1^0) \le t - t_0(n,R) \}}]\ }-  \E_{\xi'''}[F(\Phi_{t - \tau}(\Phi_\tau(\u_1^0,\xi'), \xi'''))\Big)\Big] d \rho_s(\u_1^0) \\
&\le \int \prob(\{ \tau(\u_0,\u_1^0) > t - t_0(n,R) \}) d \rho_s(\u_1^0)+ K'(R) \eps^\frac12,
\end{align*}
where $\xi'', \xi'''$ are copies of space-time white noise (that correspond to $\xi(t - \tau), \xi'(t-\tau)$ respectively). Therefore, by Lemma \ref{LEM:testLinfty}, and since $\tau$ is finite a.s., by dominated convergence we have 
\begin{align*}
\limsup_{t \to \infty} \| \Pt{t}^\ast \dl_{\u_0} - \rho_s \|_{d_n} & \le \limsup_{t \to \infty} \int \prob(\{ \tau(\u_0,\u_1^0) > t - t_0(n,R) \}) d \rho_s(\u_1^0) + K'(R) \eps^\frac12\\
&\le K'(R) \eps^\frac12.
\end{align*}
Since $\eps>0$ was arbitrary, we deduce \eqref{EQN:WassConv}. 
\end{proof}

Before proceeding to the proof of Theorem \ref{PROP:flowclose}, we need the following estimate.
\begin{lemma}\label{LEM:prob}
Let $X\geq 0$ be a random variable such that $\E[X]=1$. Then, for any $0\le \ta \le 1$, we have 
\begin{align*}
\E \big[ \big| X-1 \big| \big] \leq 2\big[  1-\ta \,\prob\big(X\geq \ta \big) \big]
\end{align*}
\end{lemma}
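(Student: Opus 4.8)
The key observation is a symmetrization: since $\E[X-1]=0$, the positive and negative parts of $X-1$ have equal expectation, so
\begin{equation*}
\E\big[|X-1|\big] = \E[(X-1)_+] + \E[(1-X)_+] = 2\,\E[(1-X)_+].
\end{equation*}
Thus it suffices to show $\E[(1-X)_+] \le 1 - \theta\,\prob(X \ge \theta)$. I would do this by splitting the expectation according to whether $X \ge \theta$ or $X < \theta$.

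On the event $\{X \ge \theta\}$, we have the pointwise bound $(1-X)_+ \le (1-\theta)$: indeed, if $X \le 1$ then $(1-X)_+ = 1-X \le 1-\theta$, while if $X > 1$ then $(1-X)_+ = 0 \le 1-\theta$. On the event $\{X < \theta\}$, we simply use that $X \ge 0$ forces $(1-X)_+ \le 1$. Combining,
\begin{equation*}
\E[(1-X)_+] \le (1-\theta)\,\prob(X \ge \theta) + \prob(X < \theta) = 1 - \theta\,\prob(X \ge \theta),
\end{equation*}
and multiplying by $2$ gives the claim.

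\textbf{Main obstacle.} There is essentially no analytic difficulty here; the whole content is the identity $\E[|X-1|] = 2\E[(1-X)_+]$ together with the elementary pointwise estimates on $(1-X)_+$ (which crucially use $X \ge 0$ to control the contribution of $\{X < \theta\}$ by $1$). The only point to be careful about is that the two pointwise bounds are applied on complementary events, so that no factor is double-counted; this is handled automatically by the indicator decomposition above.
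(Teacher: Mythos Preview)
Your proof is correct and follows essentially the same approach as the paper: both use $\E[X]=1$ to obtain the identity $\E[|X-1|]=2\,\E[(1-X)_+]$ (the paper writes this as $2(1-\E[X\ind_{\{X\le 1\}}]-\prob(X>1))$, which is the same thing), and both split according to $\{X\ge\theta\}$ versus $\{X<\theta\}$. Your pointwise bound $(1-X)_+\le(1-\theta)\ind_{\{X\ge\theta\}}+\ind_{\{X<\theta\}}$ is slightly more direct than the paper's route of bounding $\E[X\ind_{\{X\le 1\}}]$ and $\prob(X>1)$ separately from below, but the content is identical.
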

\begin{proof}
We write 
\begin{align*}
\E \big[ \big| X-1 \big| \big]  & = \E \big[ ( X-1)\ind_{\{ X>1\}} \big]+\E \big[ ( 1-X)\ind_{\{ X\leq 1\}} \big] \\
& = \E\big[ X\ind_{\{ X>1\}}\big]-\prob(X>1)+\E \big[ ( 1-X)\ind_{\{ X\leq 1\}} \big] \\
& =1-\E \big[ X\ind_{\{ X\leq 1\}} \big] -\prob(X> 1)+\E \big[ ( 1-X)\ind_{\{ X\leq 1\}} \big] \\
& = 2\big(1-\E \big[ X\ind_{\{ X\leq 1\}} \big]-\prob(X> 1)\big).
\end{align*}
Using that 
\begin{align*}
\E\big[ X\ind_{\{X\leq 1\}}\big] & \geq \E\big[ X \ind_{\{X\geq \ta\}\cap \{X\leq 1\}}  \big] \geq \ta \prob\big( \{X\geq \ta\} \cap \{ X\leq 1\}\big)
\end{align*}
and that $0\le\ta\le1$ implies 
\begin{align*}
\prob( X>1) \geq \ta \prob(X>1)= \ta \prob\big( \{ X>1\}\cap \{X\geq \ta\} \big),
\end{align*}
we therefore have 
\begin{align*}
\E \big[ \big| X-1 \big| \big] &\leq 2\big[ 1-\ta \prob\big( \{X\geq \ta\} \cap \{ X\leq 1\}\big)-\ta \prob\big( \{ X>1\}\cap \{X\geq \ta\} \big)\big] \\
& = 2\big[ 1-\ta \prob\big( X\geq \ta\big)\big].
\end{align*}
\end{proof}

\begin{remark}\rm \label{RMK:prob}
We will make use of the following consequence of Lemma~\ref{LEM:prob}: Let $ 1 \le p < \infty$, and let $X$ be a random variable such that $\E[ e^{X}]=1$ and $\E[|X|^{p}] < +\infty$. Then, for any $L>0$, we have
\begin{align} \label{TVest}
\E\big[ \big| e^{X}-1  \big| \big] \leq 2\bigg( 1-e^{-L}+e^{-L}L^{-p}\E\big[ |X|^{p}\big]  \bigg)
\end{align}
This follows from Lemma~\ref{LEM:prob} by putting $\ta=e^{-L}$ and applying Chebyshev's inequality to
\begin{align*}
\prob\big( e^{X}\geq e^{-L}\big)  \geq \prob\big(L\geq \log e^X \geq -L\big) = \prob\big( |X|\leq L\big) = 1-  \prob\big( |X|> L\big).
\end{align*}
\end{remark}

\begin{proof}[Proof of Proposition~\ref{PROP:flowclose}]

Fix $\u_1^{0},\u_2^{0}\in \X $ and $t>0$.
For $M\gg 1$ to be chosen later, let
 \begin{align*}
\tau_{M}&=\tau_{M}(\xi) \\ 
&=\inf_{t>0}\{   \max \big\{   \|\stick_{t}(\xi)\|_{C([0,t]; \mathcal{W}^{\al,\frac{4}{\al}})}, \|\wick{\stick_{t}(\xi)^{2}}_{\g}\|_{C([0,t];L^{4}_{x})}, \|\wick{\stick_{t}(\xi)^{3}}_{\g}\|_{C([0,t];L^{2}_{x})}   \big\}  > M\}
\end{align*}
 Then, $\tau_{M}$ is a stopping time. 
 Moreover, by Proposition \ref{PROP:REGSTICK}, we have that $\tau_M \to \infty$ a.s.\ as $M \to \infty$.
Proposition~\ref{PROP:GWP} implies that the processes 
\begin{align*}
\Phi_{t}(\u_{j}^{0},\xi)=S(t)\u_j ^{0}+\stick_{t}(\xi)+\v_{j}(t; \u_j ^{0},\xi), \quad j=1,2,
\end{align*}
are a.s.\ well-defined, where $\v_{j}(t; \u_j ^{0},\xi)$ are the unique solutions to \eqref{veqnd}, for $j=1,2$, and moreover there exists $C_1=C_1(M,R)>0$ such that 
\begin{align}
\max_{j=1,2} \|\v_{j}(t;\u_{j}^{0},\xi)\|_{C([0,t\wedge \tau_{M}]; \H^1)} \leq C_1. \label{vbd}
\end{align}
By Proposition~\ref{PROP:shift}, we can define $\w\in C([0,\infty); \H^{1})$ as the unique solution to \eqref{weqn} with $\w^{0}=0$ and with respect to the stochastic flow $\Phi_{1}(t;\u_1 ^{0},\xi)$, and $\w$ satisfies 
\begin{align}
\| \w(t) \|_{\H^1} \les e^{-ct}\|\u_2^{0}-\u_1^{0}\|_{\X}, \label{wbd}
\end{align}
where the implicit constant is independent of $t,R,$ and $M$.
From Lemma~\ref{LEM:h}, we form the function $h$ according to \eqref{h} and denote by $h_M$ the process $h$ stopped at time $\tau_{M}$, that is, $h_{M}(t):=h(t\wedge \tau_M)$. We have that $h_{M}$ is adapted, and by \eqref{hestimate} and \eqref{vbd}, it satisfies 
\begin{align*}
\|h_{M}(t)\|_{L^{2}_{x}} & \leq C_{2}(M,R).  
\end{align*}
This implies that $h_M$ satisfies the Novikov condition
\begin{align*}
\E \bigg[ \exp\bigg(\frac{1}{2}\int_{0}^{t} \|h_{M}(t')\|_{L^2_x}^{2} dt' \bigg) \bigg] \leq e^{tC^{2}_{2}(M,R)}<+\infty
\end{align*}
for any $t>0$.
For $f\in L^{2}_{t,x}$, we define 
\begin{align*}
\EE(f):= \exp\bigg( -\frac{1}{2}\int_{0}^{t} \|f(t')\|_{L^2_{x}}^{2}dt'+\int_{0}^{t} \jb{ f(t') ,\xi(t')}_{L^2_{x}}dt'  \bigg).
\end{align*}
By \eqref{u2h}, we have that
\begin{align}
\Phi_{t\wedge \tau_M}(\u_2^{0},\xi+h_M)=\Phi_{t\wedge \tau_M}(\u_1 ^{0},\xi)+\mathbf{r}_M (t),\label{flowdiff}
\end{align}
where 
\begin{align*}
{\bf r}_{M}(t):=S(t\wedge \tau_M)(\u_2 ^{0}-\u_1 ^{0})+\w(t\wedge \tau_M).
\end{align*}
We have
\begin{align}
&\E \big[ F(\Phi_{t}(\u_1^{0},\xi))-F(\Phi_{t}(\u_2^{0},\xi))   \big] \notag\\
& = \E \big[ \ind_{\{t\leq \tau_{M}(\xi) \}}\big\{  F( \Phi_{t\wedge \tau_M(\xi)}(\u_1^{0};\xi) )-F( \Phi_{t\wedge \tau_M(\xi)}(\u_2^{0};\xi) )   \big\} \big]  \label{exp1}\\
& \hphantom{XXX}+ \E \big[ \ind_{\{t >\tau_{M}(\xi) \}}\big\{  F( \Phi_{t}(\u_1^{0};\xi) )-F( \Phi_{t}(\u_2^{0};\xi) )   \big\} \big].  \label{exp2}
\end{align}
Since $F\in \text{Lip}_{d_n,\frac{1}{2}}$, we have 
\begin{align}
|\eqref{exp2}| \leq 2\|F\|_{L^\infty}\prob( t>\tau_{M}(\xi)). \label{exp2bd}
\end{align}
Using \eqref{flowdiff}, we have 
\begin{align}
&\eqref{exp1}  \notag\\
&=\E \big[ \ind_{\{t\leq \tau_{M}(\xi) \}}\big\{  F( \Phi_{t\wedge \tau_M(\xi)}(\u_2^{0};\xi+h_M)-{\bf r}_{M}(t) )-F( \Phi_{t\wedge \tau_M(\xi)}(\u_2^{0};\xi+h_M) )   \big\} \big]  \label{exp3}\\
& \hphantom{XX} +\E \big[ \ind_{\{t\leq \tau_{M}(\xi) \}}\big\{  F( \Phi_{t\wedge \tau_M(\xi)}(\u_2^{0};\xi+h_M) )-F( \Phi_{t\wedge \tau_M(\xi)}(\u_2^{0};\xi) )   \big\} \big]. \label{exp4}
\end{align}
Using \eqref{lipd}, \eqref{dn1}, \eqref{S(t)onX} and \eqref{wbd}, we have 
\begin{align}
|\eqref{exp3}| &\leq \|F\|_{d_n} \E \big[  \ind_{\{t\leq \tau_{M}(\xi)\}} d_{n}(S(t)(\u_2^{0}-\u_1^{0}), {\bf 0}) \big] +\E \big[  \ind_{\{t\leq \tau_{M}(\xi)\}} d_{n}(\w(t), {\bf 0}) \big] \notag\\
& \leq 2\|F\|_{d_n} \big( 1 \wedge Cne^{-ct}\|\u_2^{0}-\u_1^{0}\|_{\X}\big). \label{exp3bd}
\end{align}
Meanwhile, by Girsanov's theorem,
\begin{align}
\eqref{exp4} &= \E \big[  \ind_{\{t\leq \tau_{M}(\xi)\}} F( \Phi_{t\wedge \tau_{M}(\xi)}(\u_2^{0},\xi+h_M))\big]-\E \big[\ind_{\{t\leq \tau_{M}(\xi) \}}F( \Phi_{t\wedge \tau_M(\xi)}(\u_2^{0};\xi) )   \big] \notag \\
&= \E \big[  \ind_{\{t\leq \tau_{M}(\xi)\}} F( \Phi_{t}(\u_2^{0},\xi+h_M))\big]-\E \big[\ind_{\{t\leq \tau_{M}(\xi) \}}F( \Phi_{t}(\u_2^{0};\xi) )   \big]  \notag \\ 
&=  \E \big[   \big( F( \Phi_{t}(\u_2^{0},\xi+h_M))  -F( \Phi_{t}(\u_2^{0};\xi) ) \big)  \big] \notag \\
& \hphantom{X} -  \E \big[  \ind_{\{t> \tau_{M}(\xi)\}} \big( F( \Phi_{t}(\u_2^{0},\xi+h_M))  -F( \Phi_{t}(\u_2^{0};\xi) ) \big)  \big] \notag \\
& = \E \big[ F(\Phi_{t}(\u_2^{0},\xi))\big( \EE(h_M) -1\big)  \big]  \label{exp5} \\
& \hphantom{X}-\E \big[  \ind_{\{t> \tau_{M}(\xi)\}} \big( F( \Phi_{t}(\u_2^{0},\xi+h_M))  -F( \Phi_{t}(\u_2^{0};\xi) ) \big)  \big] \label{exp6} 
\end{align}
We bound the second term by
\begin{align}
|\eqref{exp6}| &\leq 2 \|F\|_{L^{\infty}} \prob(t>\tau_M(\xi)). \label{exp6bd}
\end{align}
We now consider \eqref{exp5}. For $L>0$ to be chosen later, let $E:=\{ \EE(h_M)\geq e^{-L}\}$. Since $\E[ \EE(h_M)]=1$, \eqref{TVest}, Ito's isometry and \eqref{hL2} imply 
\begin{align}
|\eqref{exp5}|& \leq 2\|F\|_{L^{\infty}}\big[ 1-e^{-L} + e^{-L}L^{-1}\E[|\log \EE(h_M)|]\big] \notag\\
&\leq 2\|F\|_{L^{\infty}}\bigg[ 1-e^{-L} +e^{-L}L^{-1}\E\big[\|h\|_{L^{2}_{t,x}}^{2} + \|h\|_{L^{2}_{t,x}}\ \big] \bigg] \notag\\
&\leq 2\|F\|_{L^{\infty}}\bigg[ 1 - (1 -C(R)\|\u_1^0 - \u_2^0\|_{\X}L^{-1})e^{-L} \bigg] \label{exp5bd}
\end{align}
Finally, combining \eqref{exp2bd}, \eqref{exp3bd}, \eqref{exp6bd} and \eqref{exp5bd}, we have
\begin{align*}
&~\E \big[ F(\Phi_{t}(\u_1^{0},\xi))-F(\Phi_{t}(\u_2^{0},\xi))   \big] \notag\\ 
\leq&~2\|F\|_{L^{\infty}}\big( 1 - (1 - C(R)\|\u_1^0 - \u_2^0\|_{\X}L^{-1})e^{-L} \big) + 4\|F\|_{L^{\infty}} \prob(t>\tau_M(\xi))\\
&+ 2\|F\|_{d_n} \big( 1 \wedge Cne^{-ct}\|\u_2^{0}-\u_1^{0}\|_{\X}\big).
\end{align*}
Using that $F\in \text{Lip}_{d_n, \frac{1}{2}}$, we get 
\begin{equation}
\begin{aligned}
&\phantom{\le\ }\E \big[ F(\Phi_{t}(\u_1^{0},\xi))-F(\Phi_{t}(\u_2^{0},\xi))   \big]\\
 &\le 1 - (1 -C(R)\|\u_1^0 - \u_2^0\|_{\X}L^{-1})e^{-L} + 2  \prob(t>\tau_M(\xi))+ 2(1 \wedge Cne^{-ct}\|\u_2^{0}-\u_1^{0}\|_{\X})
\end{aligned}\label{diffbdp}
\end{equation}
Now, by Chebyshev's inequality and \eqref{stickC01}, 
\begin{align*}
\prob(t>\tau_M(\xi))&\leq \frac{\E\bigg[ \max \big\{   \|\stick_{t}(\xi)\|_{C([0,t]; \mathcal{W}^{\al,\frac{4}{\al}})}^{2}, \|\wick{\stick_{t}(\xi)^{2}}_{\g}\|_{C([0,t];L^4_{x})}^{2}, \|\wick{\stick_{t}(\xi)^{3}}_{\g}\|_{C([0,t];L^{2}_{x})}^{2}   \big\} \bigg]}{M^{2}} \\
& \leq \frac{C(t)}{M^{2}}.
\end{align*}
Inserting this bound into \eqref{diffbdp} yields 
\begin{align}
&\phantom{\le\ }\E \big[ F(\Phi_{t}(\u_1^{0},\xi))-F(\Phi_{t}(\u_2^{0},\xi))   \big] \notag \\
& \leq 1 - (1 -C(R)\|\u_1^0 - \u_2^0\|_{\X}L^{-1})e^{-L} +  \frac{2C(t)}{M^{2}}+ 2(1 \wedge Cne^{-ct}\|\u_2^{0}-\u_1^{0}\|_{\X}). \label{tobesplit}
\end{align}
We first choose $L=L(R)\gg1$, so that $2C(R)RL^{-1}<\frac{1}{4}$. 
Next, we choose $t_R=t_R(n,R,L)=t_R(n,R)>0$ so that 
\begin{align}\label{tchoice}
2(1 \wedge Cne^{-ct_R}\|\u_2^{0}-\u_1^{0}\|_{\X}) \leq  2(1 \wedge 2CRne^{-ct_R}) \leq \frac{1}{8}e^{-L}.
\end{align}
Then, we choose $M=M(R,t,L)\gg1$ so that 
\begin{align*}
\frac{2C(t)}{M^{2}}<\frac{1}{2}e^{-L}.
\end{align*}
Hence we have, for every $t \ge t_R$, 
\begin{align*}
\E \big[ F(\Phi_{t}(\u_1^{0},\xi))-F(\Phi_{t}(\u_2^{0},\xi))   \big]  & \leq 1- \frac14 e^{-L}+2\big( 1 \wedge Cne^{-ct}\|\u_2^{0}-\u_1^{0}\|_{\X}\big).
\end{align*}
Therefore,
\begin{align}
\eqref{tobesplit} \le 1 - \frac{1}{8} e^{- L(R)} =: 1 - K(R).\label{diffflow1}
\end{align}
Let now $L = R^{-\frac12} \| \u_1^0 - \u_2^0\|_{\X}^\frac 12$. Then, let $t_1 =  t_1(C, n)$ be such that 
\begin{align*}
2(1 \wedge Cne^{-ct_1}\|\u_2^{0}-\u_1^{0}\|_{\X}) \leq  \|\u_2^{0}-\u_1^{0}\|_{\X} \le \|\u_2^{0}-\u_1^{0}\|_{\X}^\frac12,
\end{align*}
and for $t \ge t_1$, let $M=M(R,t, \u_1^0, \u_2^0)\gg1$ so that 
\begin{align*}
\frac{2C(t)}{M^{2}}\le \|\u_2^{0}-\u_1^{0}\|_{\X}^\frac12.
\end{align*}
By our choice of $L$, we have that $L \le \sqrt2$, so  $e^{-L} \ge 1- L = 1 - R^{-1} \|\u_2^{0}-\u_1^{0}\|_{\X}^\frac12$. We obtain that, for some constant $C'(R)$,
\begin{align}
\eqref{tobesplit} \le C'(R) \|\u_2^{0}-\u_1^{0}\|_{\X}^\frac12. \label{diffflow2}
\end{align}
Therefore, \eqref{diffflow} follows from \eqref{diffflow1} and \eqref{diffflow2}, with $K'(R)=\frac{C'(R)}{1-K(R)}$.
\end{proof} 
\begin{remark}\label{convSpeed}
We would like provide some heuristic about our claim that the estimates in this paper can provide at best a sub-polynomial rate of convergence. 
Fix $\eps > 0$. We want to estimate the first time $t$ such that 
\begin{equation*}
\| \Pt{t}^\ast\dl_{\u_1^0} - \rho_s\|_{d_1} \les \eps.
\end{equation*}
Since the estimate \eqref{diffflow} does not provide any information about what happens outside of a ball with radius $R$, we need to fix $R$ with $\rho_s(B_{R}({\bf 0})) \les \eps$. In view of Proposition \ref{PROP:GWP}, this requires us to take $R \gtrsim |\log \eps|^A$, for some $A > 1$.
We notice that by \eqref{tchoice}, we have
\begin{equation*}
t_0 \gtrsim L \gtrsim R C(R).
\end{equation*}
The quantity $C(R)$ in this inequality comes from \eqref{exp5bd}, which in turn depends on \eqref{hL2}. From the proof of Proposition \ref{LEM:h}, we see that the constant $C$ in \eqref{hL2} satisfies 
\begin{equation*}
C \gtrsim \int_0^{+\infty} \eta(t,R)^{k(s,\alpha)} dt \gtrsim \exp(cR),
\end{equation*}
for some constant $c > 0$, and a similar estimate holds for the quantity $C(R)$. Recalling that $R \gtrsim |\log \eps|^A$, we obtain that 
$$t_0 \gtrsim \exp(c'R) \gtrsim \exp(c' |\log \eps|^A), $$
which is a function of $\eps^{-1}$ with super-polynomial growth.
\end{remark}

\begin{remark}\rm \label{RMK:t18erg}

In this remark, we let $\Phi_t$ denote the global-in-time flow of the equation \eqref{2DSDNLW} defined $\rho$ a.s., for $\rho$ in \eqref{Gibbs}; see \cite{t18erg}. Then, we have $\Phi_{t}(\u_1^0;\xi)=S(t)\u_1^0+\stick_{t}(\xi)+\v (t; \u_1^0, \xi)$, where $S(t)$ is the solution operator for the linear homogeneous equation associated to \eqref{2DSDNLW}, $\stick_{t}(\xi)$ is the corresponding stochastic convolution
\begin{align*}
\stick_t(\xi) =  \int_0^t S(t-t')\vec{0}{\sqrt{2}\xi(t')} dt',
\end{align*}
and $\v$ solves
\begin{align*}
\v(t) = - \int_0^t S(t-t') \vec{0}{\pi_1( S(t')\u_1^0+\stick_{t'}(\xi)+\v(t'))^3  } dt'.
\end{align*}
With $\w=0$ and
\begin{align*}
h= \pi_1( \Phi_{t}(\u_1^0;\xi)+S(t)(\u_2^0-\u_1^0))^3- \pi_1( \Phi_{t}(\u_1^0;\xi))^3,
\end{align*}
we obtain 
\begin{align*}
\Phi_t( \u_2^0,\xi+h)=\Phi_{t}(\u_1^0,\xi)+S(t)( \u_2^0-\u_1^0)
\end{align*}
as the analogue of \eqref{u2h}. This choice of $h$ satisfies a corresponding estimate as in \eqref{hL2} since samples of the Gibbs measure $\rho$ and $\stick_{t}(\xi)$ belong to $\mathcal{C}^{\frac{s}{2}}(\T^2)$ a.s. and $\v(t)\in \H^{1}(\T^2)$. 
The uniqueness of the Gibbs measure \eqref{Gibbs} under the flow of \eqref{2DSDNLW} then follows from the arguments in the proofs of Proposition~\ref{PROP:flowclose} and Theorem~\ref{THM:erg} along with good long time bounds for $\v$ which follow from the arguments in~\cite{t18erg}.
\end{remark}

\begin{ackno}\rm
J.\,F.~was supported by The Maxwell Institute Graduate School in Analysis and its
Applications, a Centre for Doctoral Training funded by the UK Engineering and Physical
Sciences Research Council (grant EP/L016508/01), the Scottish Funding Council, Heriot-Watt
University and the University of Edinburgh.
J.\,F. also acknowledges support from Tadahiro Oh's ERC starting grant no. 637995 ProbDynDispEq.
L.T.~was supported by the Deutsche
Forschungsgemeinschaft (DFG, German Research Foundation) under Germany's Excellence
Strategy-EXC-2047/1-390685813, through the Collaborative Research Centre (CRC) 1060. 
J.F. and L.T. would like to thank Tadahiro Oh for his continuous support during the preparation of this paper.
\end{ackno}


\begin{thebibliography}{99}




\bibitem{BdP}
V. Barbu, G. Da Prato, \emph{The stochastic nonlinear damped wave equation},
Appl. Math. Optim. 46 (2002), no. 2-3, 125--141. 

\bibitem{Bourgain2}
 J.~Bourgain, 
 {\it Periodic nonlinear Schr\"{o}dinger equation and invariant measures,}
  Comm. Math. Phys. 166 (1994), no. 1, 1--26.

  
\bibitem{bos16}
Z. Brze\'zniak, M. Ondrej\'at, J. Seidler,
\emph{Invariant measures for stochastic nonlinear beam and wave equations},
J. Differential Equations 260 (2016), no. 5, 4157--4179.

 
 \bibitem{BTglobal}
 N.~Burq, N.~Tzvetkov, 
 {\it Random data Cauchy theory for supercritical wave equations. II. A global existence
result,} Invent. Math. 173 (2008), no. 3, 477--496.

\bibitem{BW}
O.~Butkovsky, F.~Wunderlich, \emph{Asymptotic strong Feller property and local weak irreducibility via generalized couplings},  \texttt{arXiv:1912.06121}, [math.PR].

\bibitem{cmw}
A.~Chandra, A.~Moinat, H.~Weber, \emph{A priori bounds for the $\Phi^4$ equation in the full sub-critical regime}, \texttt{arXiv:1910.13854} [math.AP].
 
 \bibitem{DMVT}
  J.~Colliander, M.~Keel, G.~Staffilani, H.~Takaoka, T.~Tao, {\it A refined global well-posedness result for
Schr\"odinger equations with derivative}, SIAM J. Math. Anal. 34 (2002), no. 1, 64--86.


\bibitem{dpd}
 G.~Da Prato, A.~Debussche, 
 {\it Two-dimensional Navier-Stokes equations driven by a space-time white
noise}, J. Funct. Anal. 196 (2002), no. 1, 180--210.

\bibitem{DPZ1}
G.~Da Prato, J.~Zabczyk, 
{\it Stochastic equations in infinite dimensions},
Second edition. Encyclopedia of Mathematics and its Applications, 152. Cambridge University Press, Cambridge, 2014. xviii+493 pp. ISBN: 978-1-107-05584-1.


\bibitem{gh1}
M.~Gubinelli, M.~Hofmanov\'a, \emph{Global solutions to elliptic and parabolic $\Phi^4$ models in Euclidean space}, Comm. Math. Phys. 368 (2019), no. 3, 1201--1266.


\bibitem{gh2}
M.~Gubinelli, M.~Hofmanov\'a, \emph{A PDE construction of the Euclidean $\Phi^4_3$ quantum field theory}, \texttt{arXiv:1810.01700} [math-ph].


\bibitem{GKO}
 M.~Gubinelli, H.~Koch, T.~Oh, {\it Renormalization of the two-dimensional stochastic nonlinear wave equations,}
 Trans. Amer. Math. Soc. 370 (2018), no 10, 7335--7359.

\bibitem{GKOT}
 M.~Gubinelli, H.~Koch, T.~Oh, L.~Tolomeo,
  {\it Global dynamics for the two-dimensional stochastic nonlinear wave equations,}
 \texttt{arXiv:2005.10570}  [math.AP].
 

\bibitem{HM1}
M.~Hairer, J.~Mattingly,
{\it Ergodicity of the 2D Navier-Stokes equations with degenerate stochastic forcing,}
Ann. of Math. (2) 164 (2006), no. 3, 993--1032.


\bibitem{HM2}
M.~Hairer, J.~Mattingly,
{\it The strong Feller property for singular stochastic PDEs,}
Ann. Inst. Henri Poincaré Probab. Stat. 54 (2018), no. 3, 1314--1340.

\bibitem{HM3}
M.~Hairer, J.C.~Mattingly, M.~Scheutzow, \emph{Asymptotic coupling and a general form of Harris theorem with applications to stochastic delay equations}. Probab. Theory Relat. Fields 149, 223--259 (2011).


  \bibitem{mckean}
 H.\,P.~McKean, 
 {\it Statistical mechanics of nonlinear wave equations. IV. Cubic Schr\"odinger}, Comm. Math.
Phys. 168 (1995), no. 3, 479--491. {\it Erratum: Statistical mechanics of nonlinear wave equations. IV. Cubic
Schr\"odinger,} Comm. Math. Phys. 173 (1995), no. 3, 675.

\bibitem{mw1}
J.-C.~Mourrat, H.~Weber, \emph{Global well-posedness of the dynamic $\Phi^4$ model in the plane}, Ann. Probab. 45 (2017), no. 4, 2398--2476.

\bibitem{mw2}
J.-C.~Mourrat, H.~Weber, \emph{The dynamic $\Phi^4_3$ model comes down from infinity}, Comm. Math. Phys. 356 (2017), no. 3, 673--753. 
  

  \bibitem{BOP3}
T.~Oh, O.~Pocovnicu,
  {\it Probabilistic global well-posedness of the energy-critical defocusing quintic non-linear wave equation on $\R^3$,} 	
 J. Math. Pures Appl. 105 (2016), 342--366.
  

\bibitem{PaWu}
G. Parisi, Y. S. Wu, \emph{Perturbation theory without gauge fixing}, Scientia Sinica, Zhongguo Kexue, 24(4):483--496, 1981.

\bibitem{RevuzYor}
D.~Revuz, M.~Yor, 
{\it Continuous martingales and Brownian motion.
Third edition}, Grundlehren der Mathematischen Wissenschaften [Fundamental Principles of Mathematical Sciences], 293. Springer-Verlag, Berlin, 1999. xiv+602 pp. ISBN: 3-540-64325-7.
	

\bibitem{t18wave}
L.~Tolomeo, 
{\it Global well-posedness of the two-dimensional stochastic nonlinear wave equation on an unbounded domain}, 
to appear in Ann. Probab. 
 
\bibitem{t18erg}
L.~Tolomeo, 
{\it 
Unique ergodicity for a class of stochastic hyperbolic equations with additive space-time white noise}, 
 Comm. Math. Phys. 377 (2020), no. 2, 1311--1347.

\bibitem{t19thesis}
L.~Tolomeo,
\emph{Stochastic dispersive PDEs with additive space-time white noise,} 
PhD Thesis, University of Edinburgh, 2019.

\bibitem{Tolomeo3}
L.~Tolomeo, 
{\it Ergodicity for the hyperbolic $P(\Phi)_2$-model}, 
in preparation.

\bibitem{TW}
P.~Tsatsoulis, H.~Weber, 
{\it Spectral gap for the stochastic quantization equation on the 2-dimensional torus},
Ann. Inst. Henri Poincar\'e Probab. Stat. 54 (2018), no. 3, 1204--1249.



\end{thebibliography}
\end{document}